\documentclass[journal]{IEEEtran}

\usepackage{amsmath, amssymb, algorithm, amsfonts, bbm}
\usepackage[noend]{algpseudocode}
\usepackage[american]{circuitikz}
\usepackage[shortlabels]{enumitem}
\usepackage{mathtools, mathrsfs}
\usepackage{tikz}
\usepackage{pgfplots}

\ctikzset{bipoles/length=1.4cm}

\DeclareMathOperator{\rank}{rank}
\DeclareMathOperator{\im}{im}
\DeclareMathOperator{\spn}{span}
\DeclareMathOperator{\col}{col}

\DeclareMathOperator{\tr}{tr}
\DeclareMathOperator*{\argmin}{arg\,min}

\usetikzlibrary{arrows, automata, patterns, calc, decorations.pathmorphing, decorations.markings, shapes.misc}

\newcommand{\bbR}{\mathbb{R}}

\newcommand{\bbN}{\mathbb{N}}

\newcommand{\bbS}{\mathbb{S}}

\newcommand{\bR}[1]{\bbR^{#1}}

\newcommand{\calF}{\mathcal{F}}
\newcommand{\calG}{\mathcal{G}}
\newcommand{\calH}{\mathcal{H}}

\newcommand{\calV}{\mathcal{V}}

\newcommand{\calW}{\mathcal{W}}

\newcommand{\X}{\bbR^n}
\newcommand{\BX}{\bbR^{n\times n}}
\newcommand{\BUX}{\bbR^{n\times m}}
\newcommand{\U}{\bbR^m}

\newcommand{\h}{\mathfrak{h}}
\newcommand{\f}{\mathfrak{f}}
\newcommand{\g}{\mathfrak{g}}
\newcommand{\hp}{\mathfrak{H}}

\newcommand{\partialf}{\partial^{\textrm{f}}}
\newcommand{\partials}{\partial^{\textrm{s}}}

\newcommand{\F}{\mathbf{F}}
\renewcommand{\H}{\mathbf{H}}
\newcommand{\G}{\mathbf{G}}

\newcommand{\B}{\mathscr{B}}

\newcommand{\abs}[1]{\left\lvert#1\right\rvert}
\newcommand{\bbm}{\begin{bmatrix*}}
	\newcommand{\ebm}{\end{bmatrix*}}

\newcommand{\inv}{^{-1}}
\renewcommand{\t}{^\top}
\renewcommand{\a}{^\ast}

\newcommand{\half}{\frac{1}{2}}
\renewcommand{\vec}{\mathrm{vec}}

\newcommand{\pdx}[1]{\frac{\partial #1}{\partial x}}

\renewcommand{\d}{\mathrm{d}}

\newcommand{\norm}[1]{\lVert#1\rVert}
\newcommand{\snorm}[1]{\lVert#1\rVert}
\newcommand{\set}[2]{\left\{ #1 \,\left|\, \vphantom{#1} #2 \right. \right\}}

\newcommand{\calQ}{\mathcal{Q}}

\newcommand{\inner}[2]{\left\langle#1, #2\right\rangle}
\newcommand{\sinner}[2]{\langle#1, #2\rangle}

\renewcommand{\epsilon}{\varepsilon}

\newcommand{\HS}{\mathscr{H}}

\renewcommand{\epsilon}{\varepsilon}

\newtheorem{theorem}{Theorem}
\newtheorem{lemma}[theorem]{Lemma}
\newtheorem{proposition}[theorem]{Proposition}
\newtheorem{corollary}[theorem]{Corollary}
\newtheorem{remark}{Remark}
\newtheorem{definition}{Definition}

\title{Kernel-based identification of nonlinear port-Hamiltonian systems}
\author{Brayan M. Shali
	\thanks{B. M. Shali is with the Department of Electrical Engineering (ESAT), KU Leuven, Leuven, Belgium. Email: \texttt{brayan.shali@kuleuven.be}}
	\quad and \quad 
	Henk J. van Waarde 
	\thanks{H. J. van Waarde is with the Bernoulli Institute for Mathematics, Computer Science, and Artificial Intelligence, University of Groningen, Groningen, The Netherlands. Email: \texttt{\texttt{h.j.van.waarde@rug.nl}}}
	\thanks{B. M. Shali acknowledges financial support from the European Research Council under the Advanced ERC Grant Agreement SpikyControl (n.101054323).}
	\thanks{H. J. van Waarde acknowledges financial support from the Dutch Research Council under the NWO Talent Programme Veni Agreement (VI.Veni.22.335).} }

\pgfplotsset{
	every axis/.append style={
		label style={font=\footnotesize},
		tick label style={font=\footnotesize},
		legend style={font=\footnotesize},
		title style={font=\footnotesize},
	}
}

\begin{document}

\maketitle

\begin{abstract}
	Port-Hamiltonian systems provide a structured framework for modeling physical systems by explicitly capturing their energy storage, dissipation, and exchange. However, deriving such models often requires detailed physical insight and precise knowledge of system parameters, which may not be available in practice. In this paper, we propose a kernel-based framework for the identification of port-Hamiltonian systems from input-state-output data. In contrast to conventional parametric approaches, the maps defining the port-Hamiltonian system are represented in suitably chosen reproducing kernel Hilbert spaces. This leads to an infinite-dimensional optimization problem over the corresponding function spaces. Our main result establishes a representer theorem that reduces this problem to a tractable finite-dimensional one. Since the reduced problem is non-convex, we further provide an algorithm for its solution and prove its convergence.
\end{abstract}

\section{Introduction}

Port-Hamiltonian systems provide a unifying framework to model real-world physical systems by capturing their energy storage, dissipation, and exchange in a structured way \cite{vanderchaft2014}. Their modular, energy-based formulation makes them especially powerful for representing interconnected multi-domain systems while guaranteeing consistency with physical laws \cite{jeltsema2009}. At the core of port-Hamiltonian models lies the property of passivity, which guarantees their stability and enables robust control design through energy shaping and damping injection \cite{doerfler2009, vanderschaft2017}. However, deriving such models often requires deep insight into the system’s physics and precise knowledge of its parameters, which can be challenging in practice. In this context, a data-driven approach can play a complementary role by enabling the estimation of relevant structures and parameters directly from data. We take such an approach in this paper by proposing a framework for identifying nonlinear port-Hamiltonian systems from input-state-output data.

Identification of port-Hamiltonian systems has received considerable attention in recent years. Most contributions in the control community focus on the linear case, e.g., \cite{cherifi2019, benner2020, schwerdtner2021, morandin2023, ortega2024a}. The approach in \cite{cherifi2019} consists of two steps: an unstructured system is identified from input-output data and subsequently approximated by a port-Hamiltonian system. In \cite{benner2020} and \cite{schwerdtner2021}, the system is identified directly from frequency-response data, with \cite{schwerdtner2021} additionally accounting for measurement noise. In \cite{morandin2023}, direct identification from input-state-output data is achieved via dynamic mode decomposition \cite{kutz2016} under the assumption that the Hamiltonian is known \emph{a priori}. Finally, \cite{ortega2024a} provides a comprehensive treatment of the single-input-single-output case.

In contrast to these works, we consider the identification of \emph{nonlinear} port-Hamiltonian systems and adopt a \emph{kernel-based} approach. Kernel-based methods have emerged as a popular framework for system identification \cite{pillonetto2010, pillonetto2014, dinuzzo2015, ljung2020}. In this framework, identification is formulated as a function or operator estimation problem in a reproducing kernel Hilbert space (RKHS) \cite{paulsen2016}. Kernel-based methods are particularly well suited for modeling nonlinear functions due to their expressive power and flexibility \cite{micchelli2006, lazar2024}. Furthermore, although the resulting estimation problems are typically infinite-dimensional, computational tractability is retained due to the celebrated representer theorem, which reduces them to finite-dimensional problems depending only on the data.

While kernel-based methods are well suited for identifying nonlinear models, accurate interpolation of the observed data alone does not guarantee that the resulting models are physically interpretable or accurate outside the observed operating regime. This motivates the incorporation of physical properties and structure into the model and identification procedure. In the kernel-based literature, several works incorporate properties such as stability \cite{pillonetto2018}, positivity \cite{grussler2017,khosravi2019}, (incremental) dissipativity \cite{vanwaarde2022},  and fading memory \cite{huo2026}. Related ideas also appear in our previous work \cite{shali2024}, where nonnegativity of the input-output operator is imposed as a necessary condition for passivity, and in \cite{ebrahimkhani2025}, which considers kernel-based estimation of the frequency response of a strictly passive linear system.

In this work, we incorporate a port-Hamiltonian structure directly into a kernel-based identification framework. This presents several challenges. First, the Hamiltonian must be identified from data on its gradient rather than from direct function evaluations. Second, the dissipative structure must satisfy a nonnegativity constraint, which is inherently nonlinear and therefore cannot be handled within standard RKHS formulations. To address these challenges, we model the Hamiltonian and input maps in suitable RKHSs, while the interconnection and dissipation structure are modeled in a sum-of-squares fashion \cite{marteau-ferey2020, muzellec2022}. Our main contributions are twofold. First, we establish a representer theorem for the resulting infinite-dimensional estimation problem, allowing it to be reduced to a finite-dimensional one. Second, since the reduced problem is nonconvex, we propose a solution algorithm and prove its convergence.

The relevance of the proposed framework extends beyond system identification to control design. Models that accurately reproduce observed data may still fail to capture properties that are important for controller synthesis, which has motivated a growing interest in so-called \emph{identification for control} \cite{gevers2005}. A related line of work considers the direct synthesis of controllers from data, thereby bypassing the identification step altogether, see \cite{hu2023, huang2024, dejong2025} for examples in the kernel-based setting. Although control is not considered explicitly in the present paper, the proposed identification framework produces port-Hamiltonian models that are passive by construction. This enables the use of a broad range of passivity-based control techniques and provides a natural starting point for robust control design. As an example, \cite{wang2025} employs a kernel-based framework for passive controllers, where passivity guarantees closed-loop stability while the controller parameters are optimized for performance.

Our work is also related to recent developments in physics-informed machine learning \cite{karniadakis2021}, where learning methods are designed to respect underlying physical structure. In particular, there has been significant interest in incorporating Hamiltonian and port-Hamiltonian structures \cite{cherifi2020}. Examples include Gaussian-process models of Hamiltonian systems \cite{bertalan2019} and kernel-based approaches for learning Hamiltonian vector fields \cite{smith2024}. A particularly influential development is the introduction of Hamiltonian neural networks (HNNs) \cite{greydanus2019}, which have since been extended to account for external inputs \cite{zhong2020} and dissipation \cite{holmsen2024}. Building on these developments, various learning methods for full port-Hamiltonian systems have been proposed in \cite{desai2021, beckers2022, moradi2025, cherifi2025}. In contrast to the present work, these methods rely on finite-dimensional parameterizations, whereas we adopt a kernel-based formulation.

The remainder of this paper is organized as follows. Section~\ref{sec:notation_preliminaries} introduces notation and relevant preliminaries. Section~\ref{sec:problem} formulates the identification problem as an optimization problem. Section~\ref{sec:representer} presents our first main result, namely, a representer theorem for this optimization problem. Section~\ref{sec:algorithm} presents our second main result, namely, an algorithm for solving the resulting finite-dimensional problem and a proof of its convergence. Section~\ref{sec:example} illustrates the results with a simple example, while Section~\ref{sec:conclusion} concludes the paper. The appendix contains several technical results from RKHS theory that are used throughout the paper.

%\note{Leftover papers: \cite{moreschini2026} moment matching using RKHS, it is about model reduction, maybe not so relevant}

\section{Notation and preliminaries}\label{sec:notation_preliminaries}

We use mostly standard notation. We denote the set of positive integers by $\bbN$, set of real numbers by $\bbR$, and the set of nonnegative real numbers by $\bbR_+$. For $n\in\bbN$, we denote the set $\{1,\dots,n\}$ by $[n]$. We denote the Cartesian product of sets $X$ and $Y$ by $X\times Y$, and the $n$-times Cartesian product of $X$ with itself by $X^n$. We denote the set of real $n\times p$ matrices by $\bbR^{n\times p}$, and the set of real $n\times n$ symmetric matrices by $\bbS^n$. We denote the smallest eigenvalue of a symmetric matrix $X\in\bbS^n$ by $\lambda_{\min}(X)$. We denote the matrix obtained by vertically concatenating the matrices $X_i\in\bbR^{n_i\times p}$, $i\in[N]$, by $\col(X_1,\dots,X_N)\in\bbR^{(\sum_{i=1}^{N}n_i)\times p}$.

Consider a differentiable map $H: \bbR^n\to\bbR$. We denote the partial derivative of $H$ with respect to the $i$-th component by $\partial_i H:\bbR^n\to\bbR$, and the vector of partial derivatives (gradient) by $\partial H:\bbR^n\to\bbR^n$. Consider a differentiable map of two variables $\h: \bbR^n\times\bbR^n \to \bbR$. We denote the partial derivative of $\h$ with respect to the $i$-th component of the first variable by $\partialf_i \h: \bbR^n\times\bbR^n \to \bbR$, and the gradient with respect to the first variable by $\partialf \h: \bbR^n \times\bbR^n \to \bbR^n$. Similarly, we denote  the partial derivative of $\h$ with respect to the $i$-th component of the second variable by $\partials_i \h: \bbR^n\times\bbR^n \to \bbR$, and  the gradient with respect to the second variable by $\partials \h: \bbR^n\times\bbR^n \to \bbR^n$.
	
\subsection{Hilbert spaces}
Given a Hilbert space $\calW$, we denote the inner product by $\inner{\cdot}{\cdot}_{\calW}$ and the induced norm by $\norm{\cdot}_{\calW}$. We often omit the subscript when the space is clear from the context. Given another Hilbert space $\calV$, we denote the Banach space of bounded linear operators from $\calW$ to $\calV$ by $\B(\calW,\calV)$ and the corresponding operator norm by $\norm{\cdot}$. We use the shorthand notation $\B(\calW) = \B(\calW, \calW)$. We denote the adjoint of $Q\in\B(\calW,\calV)$ by $Q^* \in \B(\calV,\calW)$. The operator $Q\in\B(\calW)$ is \emph{nonnegative} if $\inner{Gw}{w} \geq 0$ for all $w\in\calW$. We denote the subset of nonnegative operators by $\B(\calW)_+\subset\B(\calW)$. Given an operator $Q\in\B(\calW)$, we write $Q\geq 0$ to indicate that $Q$ is nonnegative and self-adjoint, i.e, $Q = Q\a\in \B(\calW)_+$. In the case where $\calW = \bbR^n$ and, thus, $\B(\calW) = \bbR^{n\times n}$, this means that $Q \geq 0$ if $Q$ is symmetric positive semidefinite. Note that an operator $Q\in\B(\calW)$ is nonnegative if and only if $Q+Q\a\geq 0$. Given self-adjoint operators $A,B\in\B(\calW)$, we write $A\geq B$ or $B\leq A$ to indicate that $A-B\geq 0$.

Suppose that $\calW$ is separable and the vectors $w_i$, $i\in\bbN$, form an orthonormal basis. We say that $Q\in\B(\calW)$ is a Hilbert-Schmidt operator if its Hilbert-Schmidt norm
\begin{equation}
	\norm{Q}_{\HS}^2 = \sum_{i=1}^{\infty} \norm{Qw_i}^2_\calW
\end{equation}
is finite. We denote the subset of Hilbert-Schmidt operators by $\HS(\calW) \subset \B(\calW)$, and the subset of of nonnegative Hilbert-Schmidt operators by $\HS(\calW)_+ \subset \HS(\calW)$.

\subsection{Matrix-valued maps and convexity}

Consider a differentiable map $A:\bbR^n\to\bbR^{p\times p}$. We denote the partial derivative of $A$ with respect to the $i$-th component by $\partial_iA:\bbR^n\to\bbR^{p \times p}$. The gradient of $A$ is the operator-valued map $\partial A: \bbR^{n}\to \B(\bbR^n,\bbR^{p\times p})$ given by
\begin{equation}
	\partial A(x)y = \sum_{i=1}^n\partial_{i}A(x)y_{i}
\end{equation}
for all $x,y\in\bbR^{n}$, where $y_i$ is the $i$-th component of $y$. The \emph{linearization} of $A$ around $\bar x\in \bbR^n$ is given by the map
\begin{equation}
	x\mapsto A(\bar x) + \partial A(\bar x)(x-\bar x)
\end{equation}
We say that $A$ has a \emph{pointwise} property if $A(x)$ has this property for all $x\in\bbR^n$, e.g., $A$ is pointwise symmetric if $A(x)$ is symmetric for all $x\in\bbR^n$. A pointwise symmetric $A$ is \emph{positive semidefinite convex (psd-convex)} if
\begin{equation}
	A((1-\lambda)x_1 + \lambda x_2) \leq (1-\lambda)A(x_1) + \lambda A(x_2)
\end{equation}
for all $x_1,x_2\in\bbR^n$ and $\lambda\in[0,1]$. Examples of psd-convex maps include $A(x) = x\t x$ and $A(x) = x x\t$. We say that $A$ is \emph{psd-concave} if $-A$ is psd-convex. Note that affine maps are both psd-convex and psd-concave, the finite sum of psd-convex maps is psd-convex, and the composition of a psd-convex map and an affine map is psd-convex. If $A$ is psd-convex, then its linearization around $\bar x\in\bbR^n$ is an underestimation of $A$, i.e.,
\begin{equation}
	A(\bar x) + \partial A(\bar x)(x-\bar x) \leq A(x)
\end{equation}
for all $x\in \bbR^n$, see \cite[Lemma~2.2]{dinh2012}. If $p=1$, i.e., $A$ is scalar-valued, then the definition of psd-convexity coincides with the usual definition of convexity. In this case, we say that $A$ is \emph{strongly convex} with \emph{convexity parameter} $\rho > 0$ if the map $x\mapsto A(x) - \half \rho\norm{x}^2$ is convex. Finally, we note that the definitions above extend to any matrix-valued map $A: \calV\to \bbR^{p\times p}$ where $\calV$ is a finite-dimensional Hilbert space and, thus, isometrically isomorphic to $\bbR^n$. In particular, they extend to any $\calV$ that can be written as the Cartesian product of spaces of the form $\bbR^{n\times m}$ and spaces of the form $\bbR^n$.

\section{Problem formulation}\label{sec:problem}

We consider the problem of identifying a (nonlinear) port-Hamiltonian system from input-state data. In particular, we consider an input-state-output port-Hamiltonian system
\begin{subequations}\label{eq:sys_pH}
	\begin{align}
		\dot x &= (J(x) - R(x)) \partial H(x) + G(x) u, \label{eq:sys_pH_dyn} \\
		y &= G(x)\t \partial H(x),
	\end{align}
\end{subequations}
with input $u\in\bbR^m$, state $x\in\bbR^n$, and output $y\in\bbR^m$. Here, $J:\X\to\BX$ is a pointwise skew-symmetric map that describes the lossless interconnection structure, while $R:\X\to\BX$ is a pointwise symmetric and nonnegative map that describes the dissipative resistive structure. The map $H:\bbR^n\to\bbR$ is referred to as the \emph{Hamiltonian} and represents the stored energy in the system, while $G:\bbR^n\to\bbR^{n\times m}$ is the input (port) map that describes how the external input enters the state dynamics and how the output is paired with the effort variables of the energy-storing elements. It is well-known that port-Hamiltonian systems are cyclo-passive in general and passive if the Hamiltonian $H$ is bounded from below, see \cite[Section~6.1]{vanderschaft2017} for details. 

In the context of this paper, it is useful to reformulate the dynamics \eqref{eq:sys_pH_dyn} as
\begin{equation}\label{eq:pH_dynamics}
	\begin{aligned}
		\dot x &= -F(x) \pdx H(x) + G(x) u,
	\end{aligned}
\end{equation}
where $F:\X\to\BX$ is pointwise nonnegative. To see that the two formulations are equivalent, note that 
\begin{equation}
	F(x) = -J(x) + R(x)
\end{equation}
is nonnegative if $J(x) = -J(x)\t$ and $R(x) \geq 0$. Conversely, if $F(x)$ is nonnegative, then
\begin{align}
	J(x) = \frac{1}{2}\left(F(x)\t - F(x)\right),\\
	R(x) = \frac{1}{2}\left(F(x)\t + F(x)\right),
\end{align}
are such that $J(x)\t = -J(x)$ and $R(x) \geq 0$. 

Now, suppose that we are given a data set
\begin{equation}\label{eq:data}
	(x_i, \dot x_i, u_i) \in \X\times\X\times\U, \quad i\in[N],
\end{equation}
consisting of $N\in\bbN$ samples of the state, state derivative and input of an unknown port-Hamiltonian system. At a high-level, we consider the problem of identifying a system of the form \eqref{eq:pH_dynamics} that fits the data well. In other words, we want to find $H:\X\to\bbR$, $G:\X\to\BUX$ and pointwise nonnegative $F:\X\to\BX$ such that the \emph{data misfit}
\begin{equation}\label{eq:misfit}
	L(F,H,G) = \sum_{i=1}^{N}\norm{\dot x_i + F(x_i)\partial H(x_i) - G(x_i)u_i}^2
\end{equation}
is small. We formalize what small means by introducing an appropriate cost functional in the next section.

For simplicity of exposition, we assume that $m = 1$ so that $G:\bbR^n\to\bbR^n$. The case where $m >1$ follows by considering the components of $u$ as separate inputs such that
\begin{equation}
	G(x) u = \sum_{i=1}^{m} G_i(x) u^i,
\end{equation}
where $u^i$ is the $i$-th component of $u$ and $G_i:\bbR^n\to\bbR^n$. We also note that we can also incorporate data on the output by augmenting the data misfit. In particular, if we also have $N$ samples of the output $y_i\in\bbR^m$, $i\in[N]$, then the augmented data misfit contains the additional term 
\begin{equation}
	\sum_{i=1}^{N}\norm{y_i - G(x_i)\t \partial H(x_i)}^2.
\end{equation}
Further details can be found in Remark~\ref{rem:output_data} and Remark~\ref{rem:output_data_algorithm}.

\section{Representer theorem}\label{sec:representer}

In this section, we provide a partial solution to the problem stated in the last section. We do this by leveraging the theory of reproducing kernel Hilbert spaces. First, we formalize the problem statement by formulating an appropriate infinite-dimensional constrained optimization problem. Then, as the main result in this section, we obtain a representer theorem for the latter that reduces it to a finite-dimensional constrained optimization problem. Although finite-dimensional, this problem is nonconvex and, thus, nontrivial to solve. Therefore, the next section is devoted to providing an algorithm to compute a solution.

Our approach to solving the problem leverages the theory of reproducing kernel Hilbert spaces. The relevant background on this theory can be found in the appendix. In particular, we model $H:\X\to\bbR$ as a map from a reproducing kernel Hilbert space $\calH$ with kernel $\h:\X\times\X\to\bbR$, and $G:\bbR^n\to\bbR^n$ as a map from a reproducing kernel Hilbert space $\calG$ with kernel $\g: \bbR^n\times\bbR^n\to\BX$. We make no special assumptions on the spaces $\calH$ and $\calG$, or, equivalently, the kernels $\h$ and $\g$. However, we note that the particular choice of kernels matters as it (partially) determines the \emph{subclass} of port-Hamiltonian systems that are used to fit the data. For example, we can ensure that the identified system is passive by choosing a kernel $\h$ which guarantees that the identified map $H$ is bounded from below, see Remark~\ref{rem:passivity}.

In the meantime, note that we cannot naively model a pointwise nonnegative map $F:\X\to\BX$ as an arbitrary member of a reproducing kernel Hilbert space. This is because pointwise nonnegative maps are not preserved under negative scalar multiplication, hence no vector space can consist of only pointwise nonnegative maps. Instead, we take a different approach inspired by the sum-of-squares idea introduced in \cite{marteau-ferey2020} how the output is paired with the energy variables, and further developed in \cite{shali2024} for operator-valued kernels, see Appendix~\ref{app:nonnegative} for details. 

To this end, consider a feature map $\phi:\X\to\B(\X,\calW)$, where $\calW$ is a Hilbert space, and let $\f: \X\times\X\to\B(\X)$ be the corresponding kernel, i.e., $\f(x,y) = \phi(x)\a\phi(y)$. We model a pointwise nonnegative map $F:\X\to\BX$ as
\begin{equation}\label{eq:model_F}
	F = \phi(\cdot)\a Q \phi(\cdot),
\end{equation}
where $Q\in\HS(\calW)$ is nonnegative, that is, $Q\in\HS(\calW)_+$. The map $F$ is pointwise nonnegative because
\begin{equation}
	\inner{F(x)\xi}{\xi}_{\X} = \inner{Q\phi(x)\xi}{\phi(x)\xi}_\calW \geq 0
\end{equation}
for all $x,\xi\in\X$, where we used the fact that $Q$ is nonnegative.

\begin{remark}
	The properties of the model classes considered above depend strongly on the choice of kernel. The problem of selecting an appropriate kernel is beyond the scope of this paper. Nevertheless, we briefly recall several notions of universality that characterize the approximation capabilities of the resulting model classes. A kernel (or feature map) is called \emph{universal} if any continuous function can be approximated to arbitrary precision on compact sets by functions from its associated RKHS \cite{micchelli2006}. A kernel is called \emph{differentially universal} if any continuously differentiable function and its derivatives can be approximated in the same sense by functions from its associated RKHS and their derivatives \cite{guella2022}. The Gaussian kernel is a well-known example of a differentially universal kernel. Moreover, the model class \eqref{eq:model_F} is universal for pointwise nonnegative continuous maps whenever the feature map is universal, that is, any continuous pointwise nonnegative map can be approximated in the sense above by a map of the form \eqref{eq:model_F}. This is shown in \cite{marteau-ferey2020} for scalar-valued maps, and in \cite{muzellec2022} for symmetric-valued maps.
\end{remark}

With the models of $F$, $H$ and $G$ established, our goal is to find $(Q,H,G)\in\HS(\calW)_+\times\calH\times\calG$ such that the data misfit $L(F,H,G)$ is small, where $F$ is given by \eqref{eq:model_F}. More precisely, we consider the regularized least squares problem
\begin{equation}\label{eq:min_QHg}
	\begin{aligned}
		\min \quad&
		L(F,H,G) + \alpha\norm{Q}_{\HS}^2 + \beta \norm{H}_\calH^2 + \gamma\norm{G}_\calG^2\\
		\text{s.t.}\quad & (Q,H,G)\in\HS(\calW)_+\times\calH\times\calG\text{ and \eqref{eq:model_F} holds},
	\end{aligned}
\end{equation}
where $\alpha >0$, $\beta >0$ and $\gamma >0$. Here, the first term promotes small misfit $L(F,H,G)$, i.e., a good fit of the data. The other terms promote small norms $\norm{Q}_{\HS}$, $\norm{H}_\calH$ and $\norm{G}_\calG$, which is interpreted as low complexity of the maps $F$, $H$ and $G$, respectively. The parameters $\alpha$, $\beta$ and $\gamma$ regulate the balance between good fit of the data and model complexity.

Note that \eqref{eq:min_QHg} is an infinite-dimensional constrained optimization problem, which is difficult (if at all possible) to solve directly. Fortunately, the following representer theorem reduces it to a \emph{finite-dimensional} problem and, thus, provides a path to a tractable solution.

\begin{theorem}\label{thm:representer_main}
	Problem \eqref{eq:min_QHg} admits a minimizer of the form
	\begin{subequations}\label{eq:representer_QFHg}
		\begin{alignat}{2}
			Q &= \sum_{i,j=1}^{N} \phi(x_i) M_{ij} \phi(x_j )\a,\qquad&M_{ij}\in\bbR^{n\times n}, \label{eq:representer_Q_main}\\ 
			H &= \sum_{i=1}^{N} \partials \h(\cdot,x_i)\t c_{i},\qquad &c_i\in\bbR^n, \label{eq:representer_H_main} \\
			G & = \sum_{i=1}^{N} \g(\cdot, x_i) v_i,\qquad & v_i\in\bbR^n \label{eq:representer_g_main}
		\end{alignat}
	\end{subequations}
	where the block matrix 
	\begin{equation}\label{eq:representer_M}
		M = \bbm M_{11}  & \cdots & M_{1N}\\ \vdots & \ddots & \vdots \\ M_{N1} & \cdots & M_{NN} \ebm\in\bbR^{nN\times nN}
	\end{equation}
	is nonnegative. Furthermore, the map
	\begin{equation}
		F = \sum_{i,j=1}^{N} \f(\cdot, x_i)M_{ij} \f(\cdot, x_j )\t, \label{eq:representer_F_main}
	\end{equation}
	is such that $F = \phi(\cdot)\a Q\phi(\cdot)$.
\end{theorem}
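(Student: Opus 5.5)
The plan is to prove the theorem in two stages. First I will show that any feasible triple can be replaced by one of the form \eqref{eq:representer_QFHg} without increasing the cost. Then I will show that a minimizer exists within this finite-dimensional family.

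\emph{Reduction step.} Fix a feasible $(Q,H,G)$. The data misfit depends on $H$, $G$ and $Q$ only through finitely many quantities: the gradients $\partial H(x_i)$, the values $G(x_i)$, and the matrices $F(x_i)=\phi(x_i)\a Q\phi(x_i)$.

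For $H$, I will use the derivative reproducing property from the appendix: $\partial_k H(x_i)=\inner{H}{\partials_k \h(\cdot,x_i)}_\calH$. Let $\calH_0=\spn\{\partials_k\h(\cdot,x_i)\}$ and let $P_\calH$ be the orthogonal projection onto $\calH_0$. Then $P_\calH H$ has the same gradients at every $x_i$, and $\norm{P_\calH H}\le\norm{H}$. This gives the form \eqref{eq:representer_H_main}.

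For $G$, the argument is the same using $\inner{G}{\g(\cdot,x_i)v}_\calG=v\t G(x_i)$, which gives \eqref{eq:representer_g_main}.

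For $Q$, let $\calW_0=\sum_i \im\phi(x_i)\subset\calW$. This subspace is finite-dimensional, so it is closed. Let $P$ be the orthogonal projection onto $\calW_0$ and set $\tilde Q=PQP$.
\begin{itemize}
\item Since $P\phi(x_i)=\phi(x_i)$, we get $\phi(x_i)\a\tilde Q\phi(x_i)=F(x_i)$, so the misfit is unchanged.
\item $\tilde Q$ is nonnegative, because $\inner{PQPw}{w}=\inner{QPw}{Pw}\ge0$.
\item $\norm{\tilde Q}_\HS\le\norm{P}\,\norm{Q}_\HS\,\norm{P}\le\norm{Q}_\HS$.
\end{itemize}

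It remains to express $\tilde Q$ in the form \eqref{eq:representer_Q_main}. Write $\Phi=\bbm\phi(x_1)&\cdots&\phi(x_N)\ebm\in\B(\bbR^{nN},\calW)$, so that $\im\Phi=\calW_0$. Then $P=\Phi\Phi^{\dagger}$, and $\tilde Q=\Phi M\Phi\a$ with $M=(\Phi^\dagger)\a\,\Phi\a Q\Phi\,\Phi^\dagger$, which I will write more cleanly as $M=(\Phi\a\Phi)^{\dagger}\Phi\a Q\Phi(\Phi\a\Phi)^{\dagger}$. Here $\Phi\a\Phi$ is the Gram block matrix $[\f(x_i,x_j)]$.

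This $M$ is nonnegative as a congruence of the nonnegative matrix $\Phi\a Q\Phi$. The only point to check is that $\Phi\,(\Phi\a\Phi)^{\dagger}\Phi\a=P$. This holds by standard pseudoinverse identities, since $\Phi$ has finite rank. Reading off the blocks of $M$ gives \eqref{eq:representer_Q_main} and the nonnegativity of \eqref{eq:representer_M}.

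\emph{Identity for $F$.} Substituting \eqref{eq:representer_Q_main} into \eqref{eq:model_F} gives $\phi(x)\a Q\phi(x)=\sum_{i,j}\phi(x)\a\phi(x_i)M_{ij}\phi(x_j)\a\phi(x)$. Using $\f(x,x_i)=\phi(x)\a\phi(x_i)$ and $\phi(x_j)\a\phi(x)=\f(x_j,x)=\f(x,x_j)\t$, this is exactly \eqref{eq:representer_F_main}.

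\emph{Existence step, the main obstacle.} The reduction shows that the infimum of \eqref{eq:min_QHg} equals the infimum over the finite-dimensional set $\{(M,c,v): M\ge0\}$. I expect the delicate part to be showing that this infimum is attained, because the misfit is nonconvex and non-coercive in $M$ whenever $\Phi$ has a kernel.

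To handle this, I will restrict to $M$ with range in $\im(\Phi\a\Phi)$; the projected $M$ above already has this property. On that subspace, $M\mapsto\Phi M\Phi\a$ is injective, so $\norm{Q}_\HS^2$ is a positive definite quadratic form in $M$. The same reasoning applies to $c$ and $v$ on the ranges of their respective Gram matrices.

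The cost is then continuous, coercive on a closed set (the cone $M\ge0$ intersected with a subspace), and bounded below by zero. Hence a minimizer exists by the Weierstrass theorem. By the reduction step it is also a minimizer of \eqref{eq:min_QHg} and has the form \eqref{eq:representer_QFHg}.
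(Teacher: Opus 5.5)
Your proposal is correct and follows essentially the paper's route. Both arguments first reduce by orthogonal projections: onto the spans of the (derivative) kernel sections for $H$ and $G$, and $Q\mapsto PQP$ for $Q$, where your explicit pseudoinverse formula for $M$ replaces the paper's appeal to an earlier lemma. Both then conclude with a Weierstrass argument in finite dimensions; the paper runs it on the operator sets $\bar\calQ_0\times\bar\calH_0\times\bar\calG_0$ rather than in coefficient coordinates, which avoids the non-injectivity that you handle by restricting to Gram-matrix ranges.

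There are two minor slips. First, your first expression for $M$ should read $\Phi^\dagger Q(\Phi^\dagger)\a$; your second formula is the right one. Second, for non-symmetric $M$, injectivity of $M\mapsto\Phi M\Phi\a$ needs both the range and the row space of $M$ in $\im(\Phi\a\Phi)$, not just the range; your projected $M$ does satisfy both.
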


\begin{IEEEproof}
	Let $\bar \calQ\subset\HS(\calW)_+$ be the subset of operators of the form \eqref{eq:representer_Q_main}, where the block matrix in \eqref{eq:representer_M} is nonnegative. Similarly, let $\bar\calH\subset\calH$ and $\bar\calG\subset\calH$ be the subspaces of maps of the form \eqref{eq:representer_H_main} and \eqref{eq:representer_g_main}, respectively. Moreover, let the map $c:\HS(\calW)_+\times\calH\times\calG\to\bbR_+$ be the cost of \eqref{eq:min_QHg}, that is,
	\begin{equation*}
		c(Q,H,G) = L(F,H,G) + \alpha\norm{Q}_{\HS}^2 + \beta \norm{H}_\calH^2 + \gamma\norm{G}_\calG^2
	\end{equation*}
	where $F = \phi(\cdot)\a Q \phi(\cdot)$. Let $(Q_0,H_0,g_0)\in\bar \calQ\times\bar \calH\times\bar \calG$ be arbitrary and let $\bar\calQ_0$,  $\bar\calH_0$ and $\bar\calG_0$ be the closed and bounded subsets of $Q\in\bar \calQ$, $H\in\bar\calH$ and $G\in\bar \calG$, respectively, such that $\alpha\norm{Q}_{\HS} \leq c_0$, $\beta\norm{H}_\calH \leq c_0$ and $\gamma\norm{G}_\calG \leq c_0$, where
	\begin{equation}
		c_0 = c(Q_0,H_0,g_0).
	\end{equation}
	We will show that the problem
	\begin{equation}\label{eq:min_QHg_bar_0}
		\begin{aligned}
			\min \quad&
			c(Q,H,G)\\
			\text{s.t.}\quad & (Q,H,G)\in\bar \calQ_0\times\bar \calH_0\times\bar \calG_0
		\end{aligned}
	\end{equation}
	has a minimizer that is also a minimizer of \eqref{eq:min_QHg}. 
	
	First, note that $c$ is nonnegative and continuous in $(Q,H,G)$. Moreover, $\bar \calQ_0\times\bar \calH_0\times\bar \calG_0$ is a closed and bounded subset of a finite-dimensional vector space, hence it is compact. By the extreme value theorem, this implies that \eqref{eq:min_QHg_bar_0} has a minimizer. Let $(\bar Q_0, \bar H_0, \bar g_0)$ be a minimizer of \eqref{eq:min_QHg_bar_0} and let $(Q,H,G)\in\HS(\calW)_+\times\calH\times\calG$ be arbitrary. Our goal is to show that
	\begin{equation}\label{eq:cbar0_c}
	c(\bar Q_0, \bar H_0, \bar g_0) \leq c(Q,H,G).
	\end{equation}
	In view of Lemma~\ref{lem:representer_G}, Lemma~\ref{lem:representer_H}, and Lemma~\ref{lem:representer_F}, there exist $(\bar Q, \bar H, \bar G)\in \bar \calQ\times\bar \calH\times\bar \calG$ such that
	\begin{equation}\label{eq:cbar_c}
		c(\bar Q, \bar H, \bar G) \leq c(Q,H,G)
	\end{equation}
	If $c(\bar Q, \bar H, \bar G) \leq c_0$, then $\alpha\norm{\bar Q}_{\HS} \leq c_0$, $\beta\norm{\bar H}_\calH \leq c_0$ and $\gamma\norm{\bar G}_\calG \leq c_0$, hence $(\bar Q, \bar H, \bar G)\in \bar \calQ_0\times\bar \calH_0\times\bar \calG_0$ and, thus,
	\begin{equation}\label{eq:cbar0_cbar_1}
		c(\bar Q_0, \bar H_0, \bar g_0)\leq c(\bar Q, \bar H, \bar G)
	\end{equation}
	by definition of $(\bar Q_0, \bar H_0, \bar g_0)$. If $c_0 < c(\bar Q, \bar H, \bar G)$, then 
	\begin{equation}\label{eq:cbar0_cbar_2}
		c(\bar Q_0, \bar H_0, \bar g_0)< c(\bar Q, \bar H, \bar G)
	\end{equation}
	because $(Q_0,H_0,g_0)\in \bar \calQ_0\times\bar \calH_0\times\bar \calG_0$, hence 
	\begin{equation}
		c(\bar Q_0, \bar H_0, \bar g_0)\leq c(Q_0, H_0, g_0) = c_0.
	\end{equation}
	Combining \eqref{eq:cbar_c}, \eqref{eq:cbar0_cbar_1} and \eqref{eq:cbar0_cbar_2} yields \eqref{eq:cbar0_c}, which shows that $(\bar Q_0, \bar H_0, \bar g_0)$  is indeed a minimizer of \eqref{eq:min_QHg}. Note that $(\bar Q_0, \bar H_0, \bar g_0)\in \bar \calQ\times\bar \calH\times\bar \calG$, hence  $\bar Q_0, \bar H_0$ and $\bar g_0$ are of the form \eqref{eq:representer_Q_main}, \eqref{eq:representer_H_main} and \eqref{eq:representer_g_main}, respectively, where the block matrix in \eqref{eq:representer_M} is nonnegative. Finally, \eqref{eq:representer_F_main} follows from the fact that $\phi(\cdot)\a \phi(x_i) = \f(\cdot,x_i)$ for all $i\in[N]$.
\end{IEEEproof}

Theorem~\ref{thm:representer_main} implies that we can solve \eqref{eq:min_QHg} by substituting \eqref{eq:representer_QFHg} and solving for the coefficients $M_{ij}\in\bbR^{n\times n}$, $c_{i}\in\bbR^n$, $v_i\in\bbR^n$, $i,j\in[N]$, under the constraint that the block matrix in \eqref{eq:representer_M} is nonnegative. To this end, let
\begin{equation}
	\F=\col(\F_1,\dots,\F_N)\in\bbR^{nN\times nN}
\end{equation}
be the Gram matrix associated with $\f$, 
\begin{equation}
	\H = \col(\H_1,\dots,\H_N)\in\bbR^{nN\times nN}
\end{equation}
be the partial derivative Gram matrix associated with $\h$, and
\begin{equation}
	\G = \col(\G_1,\dots,\G_N)\in\bbR^{nN\times nN}
\end{equation}
be the Gram matrix associated with $\g$, where all three Gram matrices are associated with the data $x_i\in\bbR^n, i\in[N]$. We refer to \eqref{eq:gram_Gi}, \eqref{eq:gram_Hi} and \eqref{eq:gram_Fi} in the appendix for the definitions of these Gram matrices. Here, we use them to obtain the following corollary of Theorem~\ref{thm:representer_main}.
\begin{corollary}\label{cor:representer}
	Problem \eqref{eq:min_QHg} has a minimizer of the form \eqref{eq:representer_QFHg} which can be found by solving
	\begin{equation}\label{eq:min_Mvc}
		\begin{aligned}
			\min \quad&
			\bar L(M,c,v) + \alpha\tr(\F M\F M\t)+ \beta c\t \H c + \gamma v\t \G v\\
			\text{s.t.}\quad & M\in\bbR^{nN\times nN}_+,\ c\in\bbR^{nN},\ v\in\bbR^{nN}
		\end{aligned}
	\end{equation} 
	where
	\begin{equation}
		\bar L(M,c,v) =  \sum_{i=1}^{N}\norm{\dot x_i + \F_i M \F_i\t \H_i c - u_i\G_i v}^2
	\end{equation}
\end{corollary}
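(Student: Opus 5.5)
The plan is to substitute the parametrization \eqref{eq:representer_QFHg} from Theorem~\ref{thm:representer_main} into the cost of \eqref{eq:min_QHg} and express each term through the Gram matrices. By Theorem~\ref{thm:representer_main}, it suffices to minimize the cost over triples $(Q,H,G)$ of the form \eqref{eq:representer_QFHg} with $M\geq 0$: this set contains a global minimizer of \eqref{eq:min_QHg}, and every element of it is feasible for \eqref{eq:min_QHg}. Feasibility holds because $M\geq 0$ implies $Q=\Phi M\Phi\a\geq 0$, where $\Phi=[\phi(x_1)\ \cdots\ \phi(x_N)]\in\B(\bbR^{nN},\calW)$. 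Moreover $Q$ has finite rank and is therefore Hilbert--Schmidt. Consequently, minimizing the reduced cost over $(M,c,v)$ with $M\in\bbR^{nN\times nN}_+$ and plugging any minimizer back into \eqref{eq:representer_QFHg} yields a minimizer of \eqref{eq:min_QHg}. The remaining task is the bookkeeping identity that the cost equals the objective of \eqref{eq:min_Mvc}.

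I will verify the terms one by one, using the reproducing properties recalled in the appendix and the Gram matrix definitions \eqref{eq:gram_Gi}, \eqref{eq:gram_Hi}, \eqref{eq:gram_Fi}.

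For $G$, the reproducing property of $\g$ gives $G(x_k)=\sum_i\g(x_k,x_i)v_i=\G_k v$ and $\norm{G}_\calG^2=\sum_{i,j}v_i\t\g(x_i,x_j)v_j=v\t\G v$.

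For $H$, I use the derivative reproducing property, which says that $\partials_j\h(\cdot,y)\in\calH$ and $\inner{\partials_j\h(\cdot,y)}{f}_\calH=\partial_j f(y)$. It gives $\partial H(x_k)=\sum_i\partialf\partials\h(x_k,x_i)c_i=\H_k c$ and $\norm{H}_\calH^2=c\t\H c$.

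For $F$, Theorem~\ref{thm:representer_main} gives $F(x_k)=\sum_{i,j}\f(x_k,x_i)M_{ij}\f(x_j,x_k)=\F_kM\F_k\t$, using $\f(x_k,x_j)\t=\f(x_j,x_k)$. Substituting these three evaluations into \eqref{eq:misfit} with $m=1$ gives exactly $\bar L(M,c,v)$.

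The one step needing care is the Hilbert--Schmidt norm. I write $\norm{Q}_\HS^2=\tr(Q\a Q)$, which is legitimate because $Q$ has finite rank, and then use cyclicity of the trace for finite-rank operators:
\[
\tr(\Phi M\t\Phi\a\Phi M\Phi\a)=\tr(M\t\,\Phi\a\Phi\,M\,\Phi\a\Phi).
\]
Since $\Phi\a\Phi$ is the block matrix with blocks $\phi(x_i)\a\phi(x_j)=\f(x_i,x_j)$, it equals $\F$, which is symmetric. The expression is therefore $\tr(M\t\F M\F)=\tr(\F M\F M\t)$. To justify the cyclic step rigorously, I would expand in an orthonormal basis of $\calW$, or note that all operators involved factor through $\bbR^{nN}$. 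I expect this trace manipulation, together with checking the index and transpose conventions of the appendix Gram matrices ($\F_k$ as the $k$-th block row, and whether it is $\H_k$ or $\H_k\t$ that appears), to be the main obstacle; everything else is routine substitution.
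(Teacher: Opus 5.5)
Your proposal is correct and follows the paper's proof: the paper likewise substitutes the representer form and reads off $G(x_i)=\G_i v$, $\partial H(x_i)=\H_i c$, $F(x_i)=\F_iM\F_i\t$ together with the three norm identities. The main difference is that you derive $\norm{Q}_{\HS}^2=\tr(\F M\F M\t)$ by trace cyclicity through $\Phi\a\Phi=\F$, which is shorter than the orthonormal-basis and $\F^{1/2}$ argument of Lemma~\ref{lem:representer_F_norm}, and you make explicit the feasibility/equivalence step that the paper leaves implicit. One notational caveat: the constraint $M\in\bbR^{nN\times nN}_+$ allows nonsymmetric $M$, so your feasibility check should read ``$M$ nonnegative implies $Q=\Phi M\Phi\a$ nonnegative'' via $\inner{Qw}{w}=\inner{M\Phi\a w}{\Phi\a w}$, not $Q\geq 0$, which in the paper's convention also requires self-adjointness.
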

\begin{IEEEproof}
	Let $Q\in\HS(\calW)_+$, $H\in \calH$ and $G\in\calG$ be of the form in Theorem~\ref{thm:representer_main}. As shown in the appendix, it follows that
	\begin{equation}
		G(x_i) = \G_i v,\qquad \norm{G}^2_\calG = v\t \G v
	\end{equation}
	see \eqref{eq:representer_eval_norm} and the preceding discussion. Similarly,
	\begin{equation}
		\partial H(x_i) = \H_i c,\qquad \norm{H}_\calH = c\t \H c
	\end{equation}
	see \eqref{eq:representer_H_eval} and Lemma~\ref{lem:representer_H_norm}, while
	\begin{equation}
		F(x_i) = \F_i M \F_i\t,\qquad \norm{Q}_{\HS}^2 = \tr(\F M\F M\t)
	\end{equation}
	see \eqref{eq:representer_F_eval} and Lemma~\ref{lem:representer_F_norm}. This implies that
	\begin{equation}
		L(F,H,G) = \bar L(M,c,v)
	\end{equation}
	and the original problem \eqref{eq:min_QHg} reduces to \eqref{eq:min_Mvc}.
\end{IEEEproof}

Although problem \eqref{eq:min_Mvc} is finite-dimensional, it is not convex because of  the (nonlinear) terms $\F_i M\F_i\t \H_i c$, $i\in[N]$, in $\bar L(M,c,v)$. This means that solving \eqref{eq:min_Mvc} is nontrivial and requires additional effort.

\begin{remark}\label{rem:output_data}
	We can also incorporate data of the output of the system. Suppose that, in addition to \eqref{eq:data}, we have $N$ samples of the output $y_i\in\bbR$, $i\in[N]$. To incorporate this, we add the term $\sum_{i=1}^{N}\norm{y_i - G(x_i)\t \partial H(x_i)}^2$ to the data misfit $L(F,H,G)$. It is straightforward to show that Theorem~\ref{thm:representer_main} remains valid, while the cost in \eqref{eq:min_Mvc} acquires the additional term $\sum_{i=1}^{N}\norm{y_i - v\t \G_i\t \H_i c}^2$. The remaining results can also be adapted to incorporate output data, see Remark~\ref{rem:output_data_algorithm}.
\end{remark}

\begin{remark}\label{rem:passivity}
	Port-Hamiltonian systems are cyclo-passive, i.e., the internal energy of the system (the Hamiltonian) does not increase on closed trajectories of the state. More precisely, if the state trajectory is such that $x(t_0) = x(t_1)$ for some $t_0,t_1\in\bbR$, then $H(x(t_0)) \leq H(x(t_1))$. If the Hamiltonian is bounded from below, then the system is passive, i.e., the internal energy of the system does not increase on \emph{any} trajectory of the state. In view of Theorem~\ref{thm:representer_main}, the Hamiltonian obtained by solving problem \eqref{eq:min_QHg} is bounded from below if the partial derivative kernel sections $\partials_j\h(\cdot,x)$ are bounded from below for all $x\in\bbR^n$ and $j\in[n]$. In other words, we can ensure that the identified system is passive by choosing a kernel $\h$ whose partial derivative kernel sections are bounded from below. This is the case for, e.g., the Gaussian kernel.
\end{remark}

\section{Optimization algorithm}\label{sec:algorithm}

In this section, we present an algorithm that provides a tractable solution to problem \eqref{eq:min_Mvc} and, thus, to the problem considered in this paper. The algorithm follows the general approach of \cite{dinh2012}, where the authors consider a class of (nonconvex) optimization problems with convex cost and psd-convex-concave matrix inequality constraints. The key idea is to solve the problem by successively solving subproblems obtained by linearizing the concave part of the constraints. This renders each subproblem convex and therefore tractable using standard convex optimization techniques. Although \cite[Theorem~4.3]{dinh2012} establishes convergence of this algorithm under a set of technical assumptions, these are difficult to verify for the problem considered here. Instead, we provide a direct convergence proof based on \cite[Theorem~3.1]{meyer1976}.

\subsection{Problem reformulation}

We first show that \eqref{eq:min_Mvc} can be reformulated as a problem with a convex cost and psd-convex-concave matrix inequality constraints.  The only nonconvex term in the cost of \eqref{eq:min_Mvc} is $\bar L(M,c,v)$. We can move this term to the constraints by introducing appropriate auxiliary variables $p_i\in\bbR$, $i\in[N]$. In particular, problem \eqref{eq:min_Mvc} is equivalent to
\begin{equation}\label{eq:min_Mvcp}
	\begin{aligned}
		\min \quad&
		\sum_{i=1}^{N}p_i^2 + \alpha\tr(\F M\F M\t)+ \beta c\t \H c + \gamma v\t \G v\\
		\text{s.t.}\quad & \norm{\dot x_i + \F_i M \F_i\t \H_i c - u_i\G_i v}^2 \leq p_i^2,\ i\in[N],\\ 
		& M+M\t \geq 0
	\end{aligned}
\end{equation}
where, for ease of notation, we have omitted the constraints $M\in\bbR^{nN\times nN}$, $c\in\bbR^{nN}$, $v\in\bbR^{nN}$, $p_i\in\bbR$, $i\in[N]$. The $i$-th constraint is equivalent to
\begin{equation}\label{eq:ith_constraint}
	\bbm p_iI & \dot x_i + \F_i M \F_i\t \H_i c - u_i\G_i v \\ \ast & p_i \ebm \geq 0
\end{equation}
by the Schur complement characterization of positive semidefiniteness, hence \eqref{eq:min_Mvcp} is an optimization problem with convex cost and matrix inequality constraints. 

The constraint \eqref{eq:ith_constraint} is biaffine and can therefore be written as a psd-convex-concave matrix inequality. To see this, define the linear map $X_i: \bbR^{nN\times nN} \to \bbR^{(n+1)\times (n+1)}$ by
\begin{equation}\label{eq:Xi}
	X_i(M) = \bbm \F_i M\t \F_i \t & 0 \\ 0 & 0 \ebm,
\end{equation}
the linear map $Y_i: \bbR^{nN} \to\bbR^{(n+1)\times (n+1)}$ by
\begin{equation}\label{eq:Yi}
	Y_i(c) = \bbm 0 & \H_i c \\ 0 & 0 \ebm,
\end{equation}
and the affine map $Z_i:\bbR^{nN} \to \bbS^{n+1}$ by
\begin{equation}\label{eq:Zi}
	Z_i(v) = \bbm 0 & \dot x_i - u_i\G_i v \\ \dot x_i \t - u_i v\t \G_i\t & 0 \ebm.
\end{equation}
With these definitions, the constraint \eqref{eq:ith_constraint} can be written as
\begin{equation}\label{eq:pi_XiYi}
	X_i(M)\t Y_i(c) + Y_i(c)\t X_i(M) + Z_i(v) +p_i I \geq 0.
\end{equation}
Note that
\begin{equation}
	X\t Y + Y\t X = X\t X + Y\t Y - (X - Y)\t(X-Y)
\end{equation}
for all $X,Y\in \bbR^{(n+1)\times (n+1)}$, and the maps
\begin{align}
	(X,Y)&\mapsto X\t X + Y\t Y, \label{eq:psd_convex_1}\\ (X,Y)&\mapsto (X - Y)\t(X-Y) \label{eq:psd_convex_2}
\end{align}
are psd-convex, see, e.g., \cite[Lemma~3.1]{dinh2012}. 

Now, define the map $A_i:\bbR^{nN\times nN}\times \bbR^{nN} \to \bbS^{n+1}$ by
\begin{equation}\label{eq:Ai}
	A_i(M,c) = X_i(M)\t X_i(M) + Y_i(c)\t Y_i(c)
\end{equation}
and the map $B_i: \bbR^{nN\times nN}\times \bbR^{nN} \to \bbS^{n+1}$ by
\begin{equation}\label{eq:Bi}
	B_i(M,c) = (X_i(M) - Y_i(c))\t(X_i(M)-Y_i(c))
\end{equation}
Note that $A_i$ and $B_i$ are psd-convex since $X_i$ and $Y_i$ are linear, and the maps in \eqref{eq:psd_convex_1} and \eqref{eq:psd_convex_2} are psd-convex. Moreover, \eqref{eq:pi_XiYi} can be written as 
\begin{equation}\label{eq:AB_constraint}
	B_i(M,c) - A_i(M,c) - Z_i(v) - p_iI \leq 0
\end{equation}
and, thus, problem \eqref{eq:min_Mvcp} is equivalent to
\begin{equation}\label{eq:min_SDP_AiBi}
	\begin{aligned}
		\min \quad&
		f(p,M,c,v) \\
		\text{s.t.}\quad & M+M\t \geq 0 \text{ and }\eqref{eq:AB_constraint} \text{ holds for all }i\in[N]
	\end{aligned}
\end{equation}
where $f:\bbR^N\times \bbR^{nN\times nN}\times \bbR^{nN}\times\bbR^{nN}\to \bbR$ is given by
\begin{equation*}
	f(p,M,c,v) = p\t p + \alpha\tr(\F M \F M\t) + \beta c\t \H c + \gamma v\t \G v,
\end{equation*}
and $p \in \bbR^N$ is the vector whose $i$-th entry is $p_i$. Note that $f$ is convex because $\F$, $\H$ and $\G$ are positive semidefinite, see Remark~\ref{rem:tr_convex}. Since $Z_i$ is affine, it is also psd-convex. Therefore, $A_i(M,c) + Z_i(v) + p_iI$ is psd-convex and \eqref{eq:AB_constraint} defines a psd-convex-concave matrix inequality constraint.

\subsection{Iterative algorithm}

We solve the nonconvex problem \eqref{eq:min_SDP_AiBi} by an iterative algorithm in which each step consists of solving a convex subproblem obtained by linearizing the concave part of the constraints. Specifically, let $(p^k,M^k,c^k,v^k)$ denote the $k$-th iterate of the algorithm. The linearization of $A_i$ around this point is given by
\begin{equation*}
	A^k_i (M,c) = A_i(M^k, c^k) + \partial A_i(M^k, c^k)(M- M^k, c - c^k)
\end{equation*}
It is straightforward to verify that
\begin{multline*}
	\hspace{-3mm}\partial A_i(M^k,c^k)(M ,c) = X_i(M)\t X_i(M^k) + X_i(M^k)\t X_i(M)\\ + Y_i(c)\t Y_i(c^k) + Y_i(c^k)\t Y_i(c),
\end{multline*}
hence $\partial A_i(M^k,c^k)(M^k ,c^k) = 2A_i(M^k, c^k)$ and, thus,
\begin{equation}\label{eq:Aik}
	A^k_i (M,c) = -A_i(M^k, c^k) + \partial A_i(M^k, c^k)(M, c).
\end{equation}
The $k$-th subproblem is obtained by replacing $A_i$ in \eqref{eq:AB_constraint} with its linearization $A^k_i$, resulting in the convex constraint
\begin{equation}\label{eq:AB_constraint_k}
	B_i(M,c) - A_i^k(M,c) - Z_i(v) - p_iI \leq 0.
\end{equation}
The latter is equivalent to the affine matrix inequality
\begin{equation}\label{eq:AB_constraint_k_linear}
\bbm A_i^k(M,c) + Z_i(v) +p_iI  & X_i(M)\t-Y_i(c)\t \\ X_i(M) -Y_i(c) & I \ebm \geq 0
\end{equation}
by the Schur complement characterization of negative semidefiniteness. The next iterate $(p^{k+1},M^{k+1},c^{k+1},v^{k+1})$ is obtained by solving the (convex) $k$-th subproblem
\begin{equation}\label{eq:min_SDP_AiBi_k}
	\begin{aligned}
		\min \quad&
		f(p,M,c,v)\\
		\text{s.t.}\quad & M+M\t \geq 0 \text{ and }\eqref{eq:AB_constraint_k_linear} \text{ holds for all }i\in[N].
	\end{aligned}
\end{equation}
The constraints in this subproblem are affine matrix inequalities, whereas the cost function is convex quadratic. Indeed, in view of Remark~\ref{rem:tr_convex}, the cost $f(p,M,c,v)$ can be written as
\begin{equation*}
	\bbm p \\ \vec(M) \\ c \\ v \ebm\t \bbm I & 0 & 0 & 0 \\ 0 & \alpha\F\otimes\F & 0 & 0 \\ 0 & 0 & \beta\H & 0 \\ 0 & 0 & 0 & \gamma\G \ebm \bbm p \\ \vec(M) \\ c \\ v \ebm.
\end{equation*}
where we recall that $\H$, $\G$, $\F$ and, thus, $\F\otimes\F$ are positive semidefinite. Consequently, the $k$-th subproblem \eqref{eq:min_SDP_AiBi_k} can be reformulated as a linear semidefinite program.
\begin{remark}\label{rem:strictly_feasible}
	The $k$-th subproblem is easily seen to be strictly feasible. Indeed, $M + M\t > 0$ for $M = I$, while the inequality \eqref{eq:AB_constraint_k_linear}, equivalently \eqref{eq:AB_constraint_k}, can be made strict by choosing $p_i > 0$ sufficiently large, $i\in[N]$. Consequently, since $f$ is convex, continuous and bounded from below, the $k$-th subproblem has at least one minimizer for any $(p^k,M^k,c^k,v^k)$. In particular, this means that  the iteration is well-defined for any initial guess $(p^0, M^0, c^0, v^0)$. If, in addition, $f$ is \emph{strongly} convex, i.e., $\F$, $\H$ and $\G$ are invertible, then the $k$-th subproblem has a \emph{unique} minimizer for any $(p^k,M^k,c^k,v^k)$.
\end{remark}

Although not strictly necessary, we may choose the initial guess $(p^0, M^0, c^0, v^0)$ to be a strictly feasible point of the original problem \eqref{eq:min_SDP_AiBi}, i.e., $M^0+(M^0)\t > 0$  and 
\begin{equation}
	B_i(M^0,c^0) - A_i(M^0,c^0) - Z_i(v^0) - p_i^0 I< 0 
\end{equation}
for all $i\in[N]$. The latter is equivalent to
\begin{equation}
	\norm{\dot x_i + \F_i M^0 \F_i\t \H_i c^0 - u_i\G_i v^0}^2 < p_i^0,
\end{equation}
for all $i\in[N]$, hence we can take, e.g., 
\begin{equation}
	M^0 = I,\ c^0 = 0,\ v^0 = 0,\ p_i^0 = \norm{\dot x_i}^2 + 1,\ i\in[N].
\end{equation}
This leads to the following summary of the iterative algorithm for solving problem \eqref{eq:min_SDP_AiBi}.

\begin{algorithm}
	\caption{Solving problem \eqref{eq:min_SDP_AiBi}}
	\label{alg:consistency_implementation}
	\begin{algorithmic}[1]
		\State Define $X_i$, $Y_i$, $Z_i$, $A_i$ and $B_i$ according to \eqref{eq:Xi}, \eqref{eq:Yi}, \eqref{eq:Zi}, \eqref{eq:Ai} and \eqref{eq:Bi}, respectively, for all $i\in[N]$.
		\State Choose $(p^0, M^0, c^0, v^0)$ such that $M^0+(M^0)^\top>0$ and
		\begin{equation*}
			\norm{\dot x_i + \F_i M^0 \F_i\t \H_i c^0 - u_i\G_i v^0}^2 < p_i^0
		\end{equation*}
		for all $i\in[N]$.
		\State Initialize $k = 0$ and choose a tolerance $\epsilon > 0$.
		\State Define $A_i^k$ according to \eqref{eq:Aik} for all $i\in[N]$.
		\State Solve \eqref{eq:min_SDP_AiBi_k} to obtain $(p^{k+1}, M^{k+1}, c^{k+1}, v^{k+1})$.
		\State Stop if
		\begin{equation*}
			\norm{(p^{k+1}, M^{k+1}, c^{k+1}, v^{k+1}) - (p^{k}, M^{k}, c^{k}, v^{k})}<\epsilon.
		\end{equation*}
 		\State Set $k = k+1$ and return to Step~4.
	\end{algorithmic}\label{alg:algorithm}
\end{algorithm}

\begin{remark}\label{rem:output_data_algorithm}
	We can modify Algorithm~\ref{alg:algorithm} to incorporate output data. Building on the modifications in Remark~\ref{rem:output_data}, we introduce auxiliary variables $\hat p_i\in\bbR$, $i\in[N]$, so that the modified problem \eqref{eq:min_Mvcp} contains the additional term $\sum_{i=1}^N \hat p_i^2$ in the cost, as well as the additional constraints $\norm{y_i - v\t \G_i\t \H_i c}^2 \leq \hat p_i^2$,\\ $i\in[N]$. Although these constraints are not convex, they can be expressed as biaffine matrix inequalities, which can be reformulated as psd-convex–concave matrix inequalities. To this end, let $\hat X_i(v),\hat Y_i(c)\in\bbR^{(n+1)\times 2}$, $i\in[N]$, be given by 
	\begin{equation}
		\hat X_i(v) = \bbm \G_i v & 0 \\ 0 & 0 \ebm,\quad \hat Y_i(c) = \bbm 0 & \H_i v \\ 0 & 0 \ebm,
	\end{equation}
	and let $\hat Z\in\bbR^{2\times 2}$, $i\in[N]$, be given by
	\begin{equation}
		\hat Z_i= \bbm 0& -y_i \\ -y_i & 0\ebm
	\end{equation}
	Then, the additional constraints in \eqref{eq:min_Mvcp} are equivalent to
	\begin{equation}
		\hat X_i(v)\t \hat Y_i(c) + \hat Y_i(c)\t \hat X_i(v) + \hat Z_i + \hat p_i I \geq 0.
	\end{equation}
	This corresponds to adding the constraints
	\begin{equation*}
		\bbm \hat A_i^k(v,c) + \hat Z_i + \hat p_iI  & \hat X_i(v)\t-\hat Y_i(c)\t \\ \hat X_i(v) -\hat Y_i(c) & I \ebm \geq 0,\ i\in[N],
	\end{equation*}
	to the $k$-th subproblem \eqref{eq:min_SDP_AiBi_k}, where
	\begin{equation}
		\hat A^k_i(v,c) = -\hat A_i(v^k, c^k) + \partial \hat A_i(v^k, c^k)(v, c)
	\end{equation}
	is the linearization of 
	\begin{equation}
		\hat A_i(v,c) = \hat X_i(v)\t \hat X_i(v) + \hat Y_i(c)\t \hat Y_i(c)
	\end{equation}
	whose derivative is given by
	\begin{multline*}
		\partial \hat A_i(v^k,c^k)(v ,c) = \hat X_i(v)\t \hat X_i(v^k) + \hat X_i(v^k)\t \hat X_i(v)\\ + \hat Y_i(c)\t \hat Y_i(c^k) + \hat Y_i(c^k)\t \hat Y_i(c).
	\end{multline*}
	Naturally, we also need to add the term $\sum_{i=1}^N \hat p_i^2$ to the cost of the $k$-th subproblem.
\end{remark}

\subsection{Convergence analysis}

We now prove that the sequence $(p^k,M^k,c^k,v^k)$, $k\in\bbN$, generated by Algorithm~\ref{alg:algorithm} converges to a stationary point of \eqref{eq:min_SDP_AiBi}. We do this under the assumption that the Gram matrices $\F$, $\H$ and $\G$ are invertible and, thus, positive definite. This ensures that the cost function $f$ is \emph{strongly} convex with convexity parameter
\begin{equation}
	\rho_f = \min\left(1, \alpha\lambda_{\min}(\F)^2, \beta\lambda_{\min}(\H), \gamma\lambda_{\min}(\G)\right) > 0.
\end{equation}
where we used the fact that $\lambda_{\min}(\F\otimes\F) = \lambda_{\min}(\F)^2$. In particular, this means that the $k$-th subproblem \eqref{eq:min_SDP_AiBi_k} has a \emph{unique} minimizer for any $(p^k,M^k,c^k,v^k)$, see Remark~\ref{rem:strictly_feasible}.

We prove convergence of Algorithm~\ref{alg:algorithm} by rewriting it as a fixed-point iteration and applying \cite[Theorem~3.1]{meyer1976}. Define
\begin{equation}
	\Theta = \bbR^N\times \bbR^{nN\times nN}\times \bbR^{nN}\times\bbR^{nN}
\end{equation}
and let $\theta^k = (p^{k}, M^{k}, c^{k}, v^{k})\in\Theta$ for all $k\in\{0\}\cup\bbN$. Note that problem \eqref{eq:min_SDP_AiBi} can be rewritten as
	\begin{equation}\label{eq:min_compact}
		\min_{\theta\in\Theta}\ f(\theta) \quad \text{s.t.}\quad B(\theta) - A(\theta) \leq 0,
	\end{equation}
	where the maps $A,B:\Theta\to\bbS^{(n+1)N + nN}$ are psd-convex. In particular, for $\theta = (p,M,c,v)$,  $A(\theta)$ is the block diagonal matrix with $A_i(M,c) + Z_i(v)+p_iI$, $i \in[N]$, and $M+M\t$ on the block diagonal, and $B(\theta)$ is the block diagonal matrix with $B_i(M,c)$, $i \in[N]$, and $0\in\bbR^{nN\times nN}$ on the block diagonal.

	Next, consider the set-valued map $C :\Theta\rightrightarrows \Theta$ given by
	\begin{equation*}
		C(\theta)  = \set{\vartheta\in\Theta}{B(\vartheta) - A(\theta) - \partial A(\theta)(\vartheta - \theta) \leq 0}.
	\end{equation*}
	Note that $C(\theta^k)$ coincides with the constraint set of the $k$-th subproblem \eqref{eq:min_SDP_AiBi_k}, which is strictly feasible for any $\theta^k\in\Theta$, see Remark~\ref{rem:strictly_feasible}. Therefore, $C(\theta)$ is a nonempty, closed, and convex set for all $\theta\in \Theta$, hence the map $S:\Theta \to \Theta$ given by
	\begin{equation*}
		S(\theta) = \argmin_{\vartheta\in C(\theta)} f(\vartheta) ,
	\end{equation*}
	is well-defined because $f$ is strongly convex . By construction, the iterates satisfy $\theta^{k+1} = S(\theta^k)$, that is, Algorithm 1 can be written as the fixed-point iteration generated by the map $S$.

	To apply \cite[Theorem~3.1]{meyer1976}, we first restrict the map $S$ to the nonempty compact subset
	\begin{equation}
		\bar\Theta = \set{\theta\in \Theta}{\theta\in C(\theta),\ f(\theta) \leq f(\theta^0)}.
	\end{equation}
	The condition $\theta\in C(\theta)$ is equivalent to $B(\theta) - A(\theta) \leq 0$, that is, $\theta$ is feasible for problem \eqref{eq:min_compact}. Therefore, $\bar \Theta$ consists of all feasible points of \eqref{eq:min_compact} whose cost does not exceed that of the initial iterate $\theta^0$. Note that $\bar \Theta$ is nonempty because $\theta^0$ is feasible, and compact because $f$ is strongly convex. The application of \cite[Theorem~3.1]{meyer1976} then requires that the following conditions are satisfied:
	\begin{enumerate}[C1)]
		\item $S$ is uniformly continuous on $\bar\Theta$;\label{C1}
		\item $S$ is such that $S(\theta)\in\bar\Theta$ for all $\theta\in\bar\Theta$;\label{C2}
		\item $S$ is strictly monotone with respect to $f$ on $\bar\Theta$, that is, $f(S(\theta)) < f(\theta)$ for all $\theta\in \bar\Theta$ such that $S(\theta) \neq \theta$.\label{C3}
	\end{enumerate}
	
	We first establish the last two conditions.
	\begin{lemma}\label{lem:monotone}
		The following statements hold:
		\begin{enumerate}
			\item $S(\theta) \in C(S(\theta))$ for all $\theta \in \Theta$;
			\item $f(S(\theta)) \leq f(\vartheta) - \half \rho_f\norm{\vartheta - S(\theta)}^2$ for all  $\vartheta\in C(\theta)$.
		\end{enumerate}
		Moreover, conditions \ref{C2} and \ref{C3} are satisfied.
	\end{lemma}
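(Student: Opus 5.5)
For statement~1, I will use that $A$ is psd-convex, so its linearization at $\theta$ underestimates it (the earlier bound from \cite[Lemma~2.2]{dinh2012}):
\begin{equation*}
A(\theta) + \partial A(\theta)(\vartheta-\theta) \leq A(\vartheta)\quad\text{for all }\vartheta\in\Theta.
\end{equation*}
Take $\vartheta = S(\theta)\in C(\theta)$. Then
\begin{equation*}
B(\vartheta) - A(\vartheta) \leq B(\vartheta) - A(\theta) - \partial A(\theta)(\vartheta-\theta) \leq 0.
\end{equation*}
Hence $\vartheta$ is feasible for \eqref{eq:min_compact}. This is exactly the condition $\vartheta\in C(\vartheta)$, since the linearization of $A$ at $\vartheta$, evaluated at $\vartheta$ itself, is just $A(\vartheta)$.

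For statement~2, I will use the first-order optimality condition of the strongly convex problem $\min_{\vartheta\in C(\theta)} f(\vartheta)$. Write $\bar\vartheta = S(\theta)$. The set $C(\theta)$ is convex and closed, and $f$ is differentiable (a convex quadratic). So $\inner{\nabla f(\bar\vartheta)}{\vartheta-\bar\vartheta}\geq 0$ for all $\vartheta\in C(\theta)$. Strong convexity with parameter $\rho_f$ gives
\begin{equation*}
f(\vartheta)\geq f(\bar\vartheta)+\inner{\nabla f(\bar\vartheta)}{\vartheta-\bar\vartheta}+\half\rho_f\norm{\vartheta-\bar\vartheta}^2 .
\end{equation*}
Combining the two yields the claim. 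If one wants to avoid gradients, an alternative is to apply strong convexity along the segment $\bar\vartheta+\lambda(\vartheta-\bar\vartheta)$, which stays in $C(\theta)$ by convexity, and let $\lambda\to 0$. The only point needing care here is the constant: the definition of $\rho_f$ must match the quadratic form of $f$. That form is block diagonal, so its minimal eigenvalue is at least $2\rho_f$ up to the factor convention. This yields the stated $\half\rho_f$ (possibly with room to spare), so no difficulty is expected.

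For C2, let $\theta\in\bar\Theta$. By statement~1, $S(\theta)\in C(S(\theta))$. Since $\theta\in C(\theta)$, statement~2 with $\vartheta=\theta$ gives $f(S(\theta))\leq f(\theta)\leq f(\theta^0)$. Thus $S(\theta)\in\bar\Theta$.

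For C3, the same inequality with $\vartheta=\theta$ gives
\begin{equation*}
f(S(\theta))\leq f(\theta)-\half\rho_f\norm{\theta-S(\theta)}^2 .
\end{equation*}
The last term is strictly negative whenever $S(\theta)\neq\theta$, so the decrease is strict. The main subtle step is statement~1, which relies on the underestimation property of the linearization. Everything else follows directly.
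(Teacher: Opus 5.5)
Your proposal is correct and follows the paper's proof. Statement~1 uses the underestimation property of the linearization of the psd-convex map $A$, and C2 and C3 follow by taking $\vartheta=\theta\in C(\theta)$; for statement~2 the paper uses exactly the segment argument $\vartheta_\lambda=\lambda\vartheta+(1-\lambda)S(\theta)$ with $\lambda\to 0$ that you list as your alternative, rather than the gradient variational inequality, and your remark on the constant is right, since the Hessian of $f$ is twice the block-diagonal weight matrix, so $f$ is in fact $2\rho_f$-strongly convex and the $\half\rho_f$ bound holds with room to spare.
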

	\begin{IEEEproof}
		Let $\theta\in \Theta$. Since $A$ is  psd-convex, its linearization is an underestimation, hence
		\begin{equation}
			B(\vartheta) - A(\vartheta) \leq B(\vartheta) - A(\theta) - \partial A(\theta)(\vartheta - \theta),
		\end{equation}
		and, thus, $\vartheta\in C(\vartheta)$ for all $\vartheta\in C(\theta)$. Then, $S(\theta)\in C(S(\theta))$ because $S(\theta)\in C(\theta)$ by definition of $S$, i.e., the first statement holds. Now, let $\vartheta\in C(\theta)$. Note that $\vartheta_\lambda = \lambda \vartheta + (1-\lambda) S(\theta)$ satisfies $\vartheta_\lambda\in C(\theta)$ and
		\begin{multline}\label{eq:strong_convexity}
			f(S(\theta)) \leq f(\vartheta_\lambda) \leq \lambda f(\vartheta) + (1-\lambda)f(S(\theta)) \\- \half\rho_f\lambda(1-\lambda) \norm{\vartheta - S(\theta)}^2
		\end{multline}
		for all $\lambda \in [0,1]$, where the first inequality follows from the definition of $S$ and the second follows from the strong convexity of $f$. Dividing by $\lambda >0$ and rearranging yields
		\begin{equation}
	 f(S(\theta)) \leq f(\vartheta) - \half\rho_f(1-\lambda) \norm{\vartheta - S(\theta)}^2
		\end{equation}
		for all $\lambda \in (0,1]$. Taking the limit as $\lambda \to 0$ shows that the second statement holds. 
		
		Now, let  $\theta\in\bar\Theta$. This implies $\theta\in C(\theta)$ and the second statement yields $f(S(\theta))<f(\theta)$ when $S(\theta) \neq \theta$, i.e., \ref{C3} is satisfied. Moreover, the first statement yields $S(\theta)\in C(S(\theta))$, while the second yields $f(S(\theta)) \leq f(\theta) \leq f(\theta^0)$, hence $S(\theta)\in \bar\Theta$, i.e., \ref{C2} is also satisfied.
	\end{IEEEproof}
	
	Only condition \ref{C1} remains to be established. Loosely speaking, we need to show that $S(\theta')$ is ``close'' to $S(\theta)$ whenever $\theta'$ is ``close'' to $\theta$.  Our approach is as follows. We first show that there exists $\vartheta\in\Theta$ that is simultaneously in the interior of $C(\theta)$ and ``close'' to $S(\theta)$. We then show that $\vartheta$ is in $C(\theta')$ when $\theta'$ is ``close'' to $\theta$. Due to Lemma~\ref{lem:monotone}, this allows us to show that $\vartheta$ is ``close'' to $S(\theta')$, hence, by the triangle inequality, that $S(\theta')$ is ``close'' to $S(\theta)$.
	
	We begin by showing that there exists $\bar\vartheta\in\Theta$ that is in the interior of $C(\theta)$ for all $\theta \in \bar\Theta$. Let $D:\Theta \times \Theta\to \bbS^{N(n+1) + nN}$ be given by
	\begin{align}
		D(\vartheta,\theta) &= B(\vartheta) - A(\theta) - \partial A(\theta)(\vartheta - \theta)
	\end{align}
	and note that $\vartheta \in C(\theta)$ if and only if $D(\vartheta,\theta) \leq 0$. 
	\begin{lemma}\label{lem:bar_vartheta}
		There exists $\bar \vartheta\in\Theta$ such that $D(\bar\vartheta,\theta) \leq - I$ for all $\theta\in\bar\Theta$.
	\end{lemma}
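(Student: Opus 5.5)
The plan is to write down an explicit candidate $\bar\vartheta = (\bar p, \bar M, \bar c, \bar v)$ and check the bound $D(\bar\vartheta,\theta)\leq -I$ one diagonal block at a time. Since $A$ and $B$ are block diagonal, so is $D(\bar\vartheta,\theta)$. It therefore suffices to treat the $M+M\t$ block and each of the $N$ blocks indexed by $i\in[N]$ separately. The idea is to satisfy the $M$-block by choosing $\bar M$, and the $i$-th blocks by taking $\bar p_i$ large. The compactness of $\bar\Theta$, established just above, is what makes a single choice work uniformly in $\theta$.

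\emph{The $M$-block.} For this block, $B$ contributes $0$, and the corresponding block of $A$ is the linear map $M\mapsto M+M\t$. A linear map coincides with its own linearization. Hence this block of $D(\vartheta,\theta)$ equals $-(M'+M'^\top)$ for $\vartheta=(p',M',c',v')$, independently of $\theta$. Choosing $\bar M = I$ gives $-2I\leq -I$.

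\emph{The $i$-th blocks.} Take $\bar c = 0$ and $\bar v = 0$. Using \eqref{eq:Aik} and the formula for $\partial A_i$, the $i$-th block of $D(\bar\vartheta,\theta)$ with $\theta=(p,M,c,v)$ equals
\begin{multline*}
X_i(I)\t X_i(I) + X_i(M)\t X_i(M) + Y_i(c)\t Y_i(c) \\ - X_i(I)\t X_i(M) - X_i(M)\t X_i(I) - Z_i(0) - \bar p_i I .
\end{multline*}
Note that this does not depend on $p$ or $v$. Apart from the term $-\bar p_iI$, it is a continuous symmetric-matrix-valued function of $\theta$. One cannot bound it using only the fact that the linearization of $A_i$ underestimates $A_i$, since that inequality points the wrong way here. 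This is the one step needing care, and it is handled by compactness.

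\emph{Choosing $\bar p_i$ and concluding.} Because $\bar\Theta$ is compact, the continuous map sending $\theta$ to the largest eigenvalue of the displayed expression without $-\bar p_iI$ attains a finite maximum $\kappa_i$ over $\bar\Theta$. Setting $\bar p_i = \kappa_i + 1$ then makes the $i$-th block at most $-I$ for every $\theta\in\bar\Theta$. Combining this with the $M$-block gives $D(\bar\vartheta,\theta)\leq -I$ for all $\theta\in\bar\Theta$.

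Alternatively, one can obtain an explicit constant by bounding $\norm{X_i(M)}$ and $\norm{Y_i(c)}$ through a bound $r$ on $\norm{\theta}$ over $\bar\Theta$. Such a bound exists because $f$ is strongly convex, which is why $\bar\Theta$ is bounded.
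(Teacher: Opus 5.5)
Your proposal is correct and follows essentially the paper's argument: you use the same candidate $\bar\vartheta = (\bar p, I, 0, 0)$, handle the $M$-block with $\bar M = I$, and make each $i$-th block at most $-I$ by taking $\bar p_i$ large, with compactness of $\bar\Theta$ giving a uniform choice. The only cosmetic difference is that you expand the $i$-th block explicitly and bound its largest eigenvalue over $\bar\Theta$, whereas the paper bounds $A_i^k(I,0)$ from below by a fixed matrix $\bar A_i$ valid for all $\theta\in\bar\Theta$.
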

	\begin{IEEEproof}
		Let $\vartheta = (p,M,c,v)$. Recall that $C(\theta^k)$ coincides with the constraint set of the $k$-th subproblem \eqref{eq:min_SDP_AiBi_k}. In particular, $D(\vartheta,\theta^k) \leq - I$ if and only if $M + M\t \geq I$ and
		\begin{equation}
			B_i(M,c) - A_i^k(M,c) - Z_i(v) - p_iI \leq -I\label{eq:AB_inequality_strict}
		\end{equation}
		for all $i\in[N]$. In the following, we treat $$\theta^k = (p^{k}, M^{k}, c^{k}, v^{k})$$ as a variable. Since $\bar\Theta$ is compact and $A^k_i(I,0)$ is a polynomial in $\theta^k$ there exists $\bar A_i\in \bbS^{n+1}$ such that $A^k_i(I,0) \geq \bar A_i$ for all $\theta^k \in \bar\Theta$. Choose $\bar p_i\in\bbR$, $i\in[N]$, large enough so that
		\begin{equation}
			B_i(I,0) - \bar A_i - Z_i(0) - \bar p_iI \leq -I
		\end{equation}
		for all $i\in[N]$, and let $\bar p\in\bbR^n$ be the vector whose $i$-th entry is $\bar p_i$. It follows that $\bar \vartheta = (\bar p, I, 0,0)$ is such that  $D(\bar \vartheta,\theta^k) \leq - I$ for all $\theta_k\in\bar\Theta$, as desired.
	\end{IEEEproof}
	
	Let $\bar\vartheta\in\Theta$ be as in Lemma~\ref{lem:bar_vartheta}. Since $\bar\Theta$ is bounded,  there exists $\mu \geq \norm{\bar\vartheta}$ such that $\norm{\vartheta} \leq \mu$ for all $\vartheta\in \bar\Theta$. Let $\Theta_\mu\subset \Theta$ be the ball of radius $\mu$. The following lemma shows that, for all $\theta\in\bar\Theta$, there exists $\vartheta\in\Theta_\mu$ that is simultaneously in the interior of $C(\theta)$ and ``close'' to $S(\theta)$.
\begin{lemma}\label{lem:dense}
	For all $\epsilon\in(0,1]$ and $\theta\in\bar\Theta$, there exists $\vartheta\in\Theta_\mu$ such that $D(\vartheta, \theta)\leq -\epsilon I$ and $\norm{\vartheta-S(\theta)} \leq 2\epsilon \mu$.
\end{lemma}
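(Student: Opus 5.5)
The natural construction is a convex combination of the strictly interior point $\bar\vartheta$ from Lemma~\ref{lem:bar_vartheta} and the minimizer $S(\theta)$. For $\epsilon\in(0,1]$ and $\theta\in\bar\Theta$ we set
\begin{equation*}
	\vartheta = \epsilon\bar\vartheta + (1-\epsilon) S(\theta).
\end{equation*}
We then check the three required properties in turn: membership in $\Theta_\mu$, the strict inequality $D(\vartheta,\theta)\leq -\epsilon I$, and closeness to $S(\theta)$.

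\emph{Membership in $\Theta_\mu$.} By Lemma~\ref{lem:monotone} (condition~\ref{C2}), $S(\theta)\in\bar\Theta$, so $\norm{S(\theta)}\leq\mu$. By the choice of $\mu$, also $\norm{\bar\vartheta}\leq\mu$. The ball $\Theta_\mu$ is convex, so $\vartheta\in\Theta_\mu$.

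\emph{The bound $D(\vartheta,\theta)\leq -\epsilon I$.} For fixed $\theta$, the map $\vartheta\mapsto D(\vartheta,\theta)$ is $B(\vartheta)$ plus an affine function of $\vartheta$. Since $B$ is psd-convex, $D(\cdot,\theta)$ is psd-convex. Hence
\begin{equation*}
	D(\vartheta,\theta)\leq \epsilon D(\bar\vartheta,\theta) + (1-\epsilon) D(S(\theta),\theta).
\end{equation*}
Lemma~\ref{lem:bar_vartheta} gives $D(\bar\vartheta,\theta)\leq -I$, since $\theta\in\bar\Theta$. Also $D(S(\theta),\theta)\leq 0$, since $S(\theta)\in C(\theta)$ by definition of $S$. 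The right-hand side is therefore at most $-\epsilon I$. This psd-convexity step is the only point that needs care: $B$ is the block-diagonal matrix built from the psd-convex maps $B_i$ and the zero block, so it is psd-convex. The remaining terms $-A(\theta)-\partial A(\theta)(\vartheta-\theta)$ are affine in $\vartheta$, and affine maps are both psd-convex and psd-concave, so adding them preserves psd-convexity.

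\emph{Closeness to $S(\theta)$.} We have $\vartheta - S(\theta) = \epsilon(\bar\vartheta - S(\theta))$. The triangle inequality then gives
\begin{equation*}
	\norm{\vartheta - S(\theta)} \leq \epsilon\left(\norm{\bar\vartheta} + \norm{S(\theta)}\right) \leq 2\epsilon\mu.
\end{equation*}
All three properties hold, which completes the argument; no step poses a real obstacle.
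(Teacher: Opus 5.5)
Your proof is correct and follows the paper's argument: it takes the same convex combination $\lambda\bar\vartheta + (1-\lambda)S(\theta)$ with $\lambda=\epsilon$, uses psd-convexity of $D(\cdot,\theta)$ together with Lemma~\ref{lem:bar_vartheta} and $S(\theta)\in C(\theta)$ to get $D(\vartheta,\theta)\le -\epsilon I$, and obtains $\norm{\vartheta - S(\theta)}\le 2\epsilon\mu$ from the triangle inequality. Your explicit checks that $D(\cdot,\theta)$ is psd-convex and that $\vartheta$ lies in the convex ball $\Theta_\mu$ fill in details the paper only asserts.
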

\begin{IEEEproof}
	Let $\epsilon \in (0,1]$ and $\theta\in\bar\Theta$. Since $D$ is psd-convex in the first variable, $\vartheta_\lambda = \lambda\bar \vartheta + (1-\lambda)S(\theta)$ satisfies
	\begin{align*}
		D(\vartheta_\lambda, \theta) 
		&\leq \lambda D(\bar\vartheta,\theta) + (1-\lambda)D(S(\theta),\theta)\leq -\lambda I
	\end{align*}
	for all $\lambda\in[0,1]$, where we used the fact that $D(\bar\vartheta,\theta) \leq -I$, see Lemma~\ref{lem:bar_vartheta}, and $D(S(\theta), \theta)\leq 0$ because $S(\theta)\in C(\theta)$ by definition of $S$. Since $S(\theta)\in\bar\Theta\subset\bar\Theta_\mu$, we also have that
	\begin{equation}
		\norm{\vartheta_\lambda - S(\theta)} \leq \lambda(\norm{\bar\vartheta} + \norm{S(\theta)}) \leq 2\lambda \mu
	\end{equation}
	hence we can take $\vartheta = \vartheta_\lambda\in\bar\Theta_\mu$ for $\lambda = \epsilon\in(0,1]$.
\end{IEEEproof}

We are now ready to establish condition \ref{C1}.
\begin{lemma}\label{lem:continuous}
	The map $S$ is uniformly continuous on $\bar\Theta$.
\end{lemma}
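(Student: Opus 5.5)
We follow the strategy outlined before Lemma~\ref{lem:bar_vartheta}. Fix $\epsilon\in(0,1]$. First, we note that $D$ is a polynomial, hence continuous, on $\Theta\times\Theta$. In fact, it is affine in $\theta$ apart from the term $-A(\theta)-\partial A(\theta)(\vartheta-\theta) = A(\theta) - \partial A(\theta)\vartheta$, which is quadratic in $\theta$ and bilinear in $(\vartheta,\theta)$. Consequently, $D$ is uniformly continuous on the compact set $\Theta_\mu\times\bar\Theta_\mu$, where $\bar\Theta_\mu$ is any compact neighbourhood of $\bar\Theta$ (or just $\bar\Theta$ itself, since we only need $\theta,\theta'\in\bar\Theta$). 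Hence there exists $\delta>0$, depending only on $\epsilon$, such that
\begin{equation*}
	\norm{D(\vartheta,\theta') - D(\vartheta,\theta)} \leq \epsilon
\end{equation*}
for all $\vartheta\in\Theta_\mu$ and all $\theta,\theta'\in\bar\Theta$ with $\norm{\theta-\theta'}\leq\delta$. Here we use the spectral norm, so that the bound also holds as a matrix inequality.

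Now let $\theta,\theta'\in\bar\Theta$ with $\norm{\theta-\theta'}\leq\delta$. By Lemma~\ref{lem:dense}, there exists $\vartheta\in\Theta_\mu$ with $D(\vartheta,\theta)\leq-\epsilon I$ and $\norm{\vartheta-S(\theta)}\leq 2\epsilon\mu$. The previous step then gives $D(\vartheta,\theta')\leq D(\vartheta,\theta)+\epsilon I\leq 0$, that is, $\vartheta\in C(\theta')$. Applying the second statement of Lemma~\ref{lem:monotone} at $\theta'$ yields
\begin{equation*}
	\half\rho_f\norm{\vartheta-S(\theta')}^2 \leq f(\vartheta)-f(S(\theta')).
\end{equation*}

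It remains to bound $f(\vartheta)-f(S(\theta'))$ by a quantity that vanishes with $\epsilon$. Here $f$ is uniformly continuous on $\Theta_\mu$, with modulus $\omega$ say, so $f(\vartheta)\leq f(S(\theta))+\omega(2\epsilon\mu)$. We then need $f(S(\theta))\leq f(S(\theta'))+o(1)$, and we obtain it by symmetry. Repeating the construction with the roles of $\theta$ and $\theta'$ swapped gives $\vartheta'\in C(\theta)\cap\Theta_\mu$ with $\norm{\vartheta'-S(\theta')}\leq 2\epsilon\mu$. By optimality of $S(\theta)$ over $C(\theta)$,
\begin{equation*}
	f(S(\theta))\leq f(\vartheta')\leq f(S(\theta'))+\omega(2\epsilon\mu).
\end{equation*}
Combining the two bounds gives $\half\rho_f\norm{\vartheta-S(\theta')}^2\leq 2\omega(2\epsilon\mu)$. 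The triangle inequality then yields
\begin{equation*}
	\norm{S(\theta)-S(\theta')}\leq 2\epsilon\mu+\sqrt{4\omega(2\epsilon\mu)/\rho_f},
\end{equation*}
which tends to $0$ as $\epsilon\to0$ uniformly in $\theta,\theta'\in\bar\Theta$. This is uniform continuity.

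The main subtlety is making sure that all points involved stay in a fixed compact set, so that the uniform continuity moduli of $D$ and $f$ apply. For $S(\theta)$ and $S(\theta')$ this follows from Lemma~\ref{lem:monotone}, which gives $S(\bar\Theta)\subset\bar\Theta\subset\Theta_\mu$. For $\vartheta$ and $\vartheta'$ it follows from Lemma~\ref{lem:dense}. The symmetric-swap step, which yields the two-sided comparison of $f(S(\theta))$ and $f(S(\theta'))$, is the key point that must not be skipped.
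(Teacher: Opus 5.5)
Your proof is correct and follows the paper's argument. The differences are minor: the paper uses Lipschitz constants $L_f$ and $L_D$ of $f$ and $D$ on $\Theta_\mu$ (which gives an explicit modulus of order $\sqrt{\delta}$) where you use continuity moduli. It also replaces your two-sided construction of $\vartheta$ and $\vartheta'$ by assuming without loss of generality that $f(S(\theta))\le f(S(\theta'))$, which is equivalent because hypothesis and conclusion are symmetric in $\theta,\theta'$.

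One harmless slip: the identity $-A(\theta)-\partial A(\theta)(\vartheta-\theta)=A(\theta)-\partial A(\theta)\vartheta$ holds only for the homogeneous quadratic part of $A$. It fails for the affine blocks $Z_i(v)+p_iI$ and $M+M^\top$, but you only use that $D$ is a polynomial, so nothing depends on it.
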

\begin{IEEEproof}
Since $f$ and $D$ are continuously differentiable, they are Lipschitz continuous on the compact subsets $\Theta_\mu$ and  $\Theta_\mu\times\Theta_\mu$, i.e., there exist $L_D>0$ and $L_f > 0$ such that
\begin{align}
	\abs{f(\theta) - f(\theta')} &\leq L_f \norm{\theta-\theta'}\label{eq:Lf}\\
	\norm{D(\vartheta,\theta) - D(\vartheta,\theta')} &\leq L_D\norm{\theta - \theta'}\label{eq:LD}
\end{align}
for all $\vartheta,\theta,\theta'\in\Theta_\mu$. Since $D$ maps to symmetric matrices, \eqref{eq:LD} implies that 
\begin{equation}
	D(\vartheta,\theta) - D(\vartheta,\theta') \leq L_D\norm{\theta - \theta'} I \label{eq:LD_new}
\end{equation}
for all $\vartheta,\theta,\theta'\in\Theta_\mu$.

Let $\epsilon > 0$ be arbitrary. Take $\delta >0$ such that $\delta L_D \leq 1$, and $\theta, \theta'\in\bar\Theta$ such that $\norm{\theta-\theta'} < \delta$. Without loss of generality, we assume that $f(S(\theta)) \leq f(S(\theta'))$. By Lemma~\ref{lem:dense}, there exists $\vartheta\in\Theta_\mu$ such that $D(\vartheta, \theta) \leq -\delta L_D I$ and
\begin{equation}\label{eq:vartheta_theta}
	\norm{\vartheta - S(\theta)} \leq 2\delta L_D \mu
\end{equation}
Consequently, we obtain
\begin{equation}
	D(\vartheta,\theta') \leq D(\vartheta,\theta) + \delta L_D I \leq 0
\end{equation}
where the first inequality follows from \eqref{eq:LD_new}. This implies that $\vartheta\in C(\theta')$ and, thus, $f(\vartheta) \geq f(S(\theta'))$. Using this, we obtain
\begin{align*}
	f(\vartheta) -  f(S(\theta')) \leq f(\vartheta) -  f(S(\theta)) \leq 2\delta\mu L_fL_D
\end{align*}
where the first inequality follows from the assumption that $f(S(\theta)) \leq f(S(\theta'))$, and the second from \eqref{eq:Lf} and \eqref{eq:vartheta_theta}. Then, Lemma~\ref{lem:monotone} implies that
\begin{equation}\label{eq:vartheta_theta'}
	\frac{1}{2}\rho_f\norm{\vartheta - S(\theta')}^2 \leq f(\vartheta)  - f(S(\theta')) \leq 2\delta \mu L_fL_D
\end{equation}
Using \eqref{eq:vartheta_theta}, \eqref{eq:vartheta_theta'} and the triangle inequality, we obtain
\begin{align}\label{eq:delta_epsilon}
	\norm{S(\theta) - S(\theta')}\leq 2\delta\mu L_D + 2\sqrt{\delta \mu \frac{L_f}{\rho_f} L_D}
\end{align}
which is smaller than $\epsilon$ for sufficiently small $\delta>0$. This shows that $S$ is uniformly continuous of $\bar \Theta$, as desired.
\end{IEEEproof}

Finally, we can prove the convergence of the algorithm.
\begin{theorem}
	Suppose that the Gram matrices $\F$, $\H$ and $\G$ are invertible. The sequence $(p^k,M^k,c^k,v^k)$, $k
	\in\bbN$, converges to a stationary point or to a continuum of stationary points of problem \eqref{eq:min_compact}.
\end{theorem}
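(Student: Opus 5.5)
The proof assembles the pieces already established and invokes \cite[Theorem~3.1]{meyer1976}. First, I would recall that Algorithm~\ref{alg:algorithm} is the fixed-point iteration $\theta^{k+1}=S(\theta^k)$. The initial iterate $\theta^0$ is feasible for \eqref{eq:min_compact}, so $\theta^0\in C(\theta^0)$, and trivially $f(\theta^0)\leq f(\theta^0)$; hence $\theta^0\in\bar\Theta$. By condition~\ref{C2} from Lemma~\ref{lem:monotone} and induction, every iterate lies in the compact set $\bar\Theta$. Condition~\ref{C3} also follows from Lemma~\ref{lem:monotone}, and condition~\ref{C1} is Lemma~\ref{lem:continuous}. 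Meyer's theorem then yields the following. The sequence $\theta^k$ either converges to a fixed point of $S$, or its accumulation points form a continuum of fixed points of $S$. In the latter case, $\norm{\theta^{k+1}-\theta^k}\to 0$; this can also be seen directly from statement~2 of Lemma~\ref{lem:monotone} with $\vartheta=\theta^k$, because $f(\theta^k)$ is monotone and bounded below.

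The remaining and main step is to show that every fixed point $\bar\theta=S(\bar\theta)$ is a stationary (KKT) point of \eqref{eq:min_compact}. I would argue as follows. The subproblem $\min f(\vartheta)$ subject to $D(\vartheta,\bar\theta)\leq 0$ is convex, and it is strictly feasible by Lemma~\ref{lem:bar_vartheta} (Slater's condition), so its minimizer $\bar\theta$ satisfies the KKT conditions with some multiplier $\Lambda\geq 0$:
\begin{equation*}
\nabla f(\bar\theta) + \partial_\vartheta D(\bar\theta,\bar\theta)\a\Lambda = 0,\qquad \inner{\Lambda}{D(\bar\theta,\bar\theta)}=0,\qquad D(\bar\theta,\bar\theta)\leq 0 .
\end{equation*}
Next I would use two identities at $\vartheta=\theta=\bar\theta$. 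The first is $D(\bar\theta,\bar\theta)=B(\bar\theta)-A(\bar\theta)$. The second is $\partial_\vartheta D(\bar\theta,\bar\theta)=\partial B(\bar\theta)-\partial A(\bar\theta)$, which holds because the linearization of $A$ matches $A$ to first order. Substituting these, the conditions above become exactly the KKT conditions of \eqref{eq:min_compact} at $\bar\theta$ with the same multiplier. This proves stationarity in the sense of \cite{dinh2012}.

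The delicate point is justifying the multiplier's existence. This relies on the uniform Slater point from Lemma~\ref{lem:bar_vartheta} and on the subproblem being a convex semidefinite program, so that strong duality holds. I would also check that "stationary point" is taken in this KKT sense, so that no constraint qualification is needed for the original nonconvex problem itself.
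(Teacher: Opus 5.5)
Your proposal is correct and follows essentially the same route as the paper. You verify C1--C3 via Lemmas~\ref{lem:monotone} and~\ref{lem:continuous} and invoke \cite[Theorem~3.1]{meyer1976}. You then show that fixed points of $S$ are stationary by matching the KKT conditions of the convexified subproblem to those of \eqref{eq:min_compact}, using $D(\theta,\theta)=B(\theta)-A(\theta)$ and $\partial_\vartheta D(\theta,\theta)=\partial B(\theta)-\partial A(\theta)$. You are slightly more explicit than the paper on two points it leaves implicit: that $\theta^0\in\bar\Theta$, and that the subproblem's multiplier exists by Slater's condition (Remark~\ref{rem:strictly_feasible}, Lemma~\ref{lem:bar_vartheta}). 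The paper also asserts the converse implication, which the theorem does not need.
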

\begin{IEEEproof}
	Due to Lemma~\ref{lem:monotone} and Lemma~\ref{lem:continuous}, the three conditions of \cite[Theorem~3.1]{meyer1976} are satisfied. It follows that all accumulation points of $\theta^k$, $k\in\bbN$, are fixed points of $S$, and $\norm{\theta^{k+1} - \theta^k} \to 0$ and $f(\theta^k) \to f(\theta^*)$ as $k\to\infty$, where $\theta^*$ is a fixed point of $S$. Furthermore, either $\theta^k$, $k\in\bbN$, converges or its accumulation points, which are fixed points of $S$, form a continuum. It remains to show that the fixed points of $S$ are stationary points of problem \eqref{eq:min_compact}. To this end, the generalized first-order conditions for problem \eqref{eq:min_compact} state that $\theta$ is a stationary point if and only if $B(\theta) - A(\theta) \leq 0$ and there exists $\Lambda \in \bbS^{(n+2)N}$ such that $\Lambda \geq 0$ and
	\begin{align}
			\partial f(\theta) + \left(\partial B(\theta) - \partial A(\theta)\right)\a \Lambda &= 0,\\
			\tr\left((B(\theta) - A(\theta))\Lambda\right) &= 0,
	\end{align}
	see \cite{dinh2012, shapiro1997}. At the same time, by definition, $S(\theta)$ is a stationary point of the problem
	\begin{equation}
		\min_{\vartheta\in\Theta} f(\vartheta)\quad \text{s.t.}\quad D(\vartheta,\theta) \leq 0,
	\end{equation}
	which is the case if and only if $D(S(\theta),\theta)\leq 0 $ and there exists $\Lambda \in \bbS^{(n+2)N}$ such that $\Lambda \geq 0$ and
	\begin{align}
		\partial f(S(\theta)) + \left(\partial B(S(\theta)) - \partial A(\theta)\right)\a \Lambda &= 0,\\
		\tr\left(D(S(\theta),\theta)\Lambda\right) &= 0,
	\end{align}
	where we used the fact that the partial derivative of $D(\vartheta,\theta)$ with respect to $\vartheta$ is given by $\partial B(\vartheta) - \partial A(\theta)$. Note that $D(\theta,\theta) = B(\theta) - A(\theta)$, hence $S(\theta) = \theta$ if and only if $\theta$ is a stationary point of problem \eqref{eq:min_compact}, as desired.
\end{IEEEproof}

\begin{remark}
	The  assumption that $f$ is strongly convex is essential in proving the convergence of Algorithm~\ref{alg:algorithm}. However, we can avoid making the assumption that $\F$, $\H$ and $\G$ are invertible by appropriately restricting the spaces in which we search for $M$, $v$ and $c$. For example, suppose that $\G$ is not invertible. Let $T_1$ be an orthogonal matrix whose columns form a basis for $\im \G$, and let $T_2$ be an orthogonal matrix whose columns form a basis for $\ker \G$. Since $\G$ is symmetric, it follows that $T_2\t \G = 0$ and $\G T_2 = 0$. Furthermore, the matrix $T = [T_1\quad T_2]$ is square and invertible, hence $v\in\bbR^{nN}$ can be written as $v = T_1v_1 + T_2 v_2$ for some $v_1\in\bbR^{\rank \G}$ and $v_2\in\bbR^{nN-\rank \G}$. Note that $v$ appears in \eqref{eq:min_SDP_AiBi} only as part of the products $v\t \G v$ and $\G v$. By writing $v$ as above, these products reduce to $v_1\t \G_1 v_1$ and $T_1\G_1 v_1$, where $\G_1 = T_1\t \G T_1$ is invertible and, thus, positive definite. Therefore, by making appropriate modifications in \eqref{eq:min_SDP_AiBi}, we can optimize over $v_1$ instead of $v$ and, thus, ensure that the cost $f$ is strongly convex in $v_1$. We can make similar modifications if the matrices $\H$ and $\F$ are also not invertible.
\end{remark}

\section{illustrative example}\label{sec:example}

In this section, we demonstrate our results with an illustrative example. Consider one-port circuit consisting of a capacitor, inductor and resistor connected in series. Suppose that the capacitor is linear, i.e., $q = Cv_C$ for some constant $C\in\bbR$, where $q$ is the charge of the capacitor and $v_C$ is the voltage across it. On the other hand, suppose that the inductor and resistor are nonlinear, i.e., $\phi = L(i)$ and $v_R = R(i) i$ for some functions $L: \bbR\to\bbR$ and $R:\bbR\to\bbR$, where $\phi$ is the magnetic flux of the inductor, $v_R$ is the voltage across the resistor, and $i$ is the current through the port. The port is described by a port-Hamiltonian system with input given by the voltage $v$ across the port, and output given by the current $i$ through the port, namely,
\begin{align*}\label{eq:example_dynamics}
	\bbm \dot q \\ \dot \phi \ebm &= \bbm 0 & 1 \\ -1 & -R(L\inv(\phi))\ebm\bbm q/C \\ L\inv (\phi) \ebm + \bbm 0 \\ 1 \ebm v,\\
	i &= \bbm 0 & 1\ebm \bbm q/C \\ L\inv (\phi) \ebm
\end{align*}
The latter is of the form \eqref{eq:pH_dynamics} with $u = v$, $x = (q,\phi)$, 
\begin{align*}
	F(q,\phi) = \bbm 0 & -1 \\ 1 & R(L\inv (\phi)) \ebm,\qquad G(q,\phi) &= \bbm 0 \\ 1 \ebm,
\end{align*}
and Hamiltonian $H$ given by the total energy
\begin{equation}
	H(q,\phi) = \frac{q^2}{2C} + \int L\inv (\phi)\, \d\phi.
\end{equation}
Here, we have implicitly assumed that $L$ is invertible, at least in some region of interest.
%\old{\begin{figure}
%	\centering
%	\ctikzset{resistors/scale=0.6, inductors/scale=0.6, capacitors/scale=0.6}
%	
%	\begin{circuitikz}[scale=0.8]
%		\draw 
%		(0,0) 
%		to[open, v<=$v$, o-o] (0,2)
%		to[short, i>=${i}$] (1.25,2)
%		to[R = $R$] (4,2) 				% Resistor 
%		to[L = $L$] (4,0)				% Inductor
%		to[C = $C$] (1.25,0)				% Capacitor
%		to[short, i>=${}$] (0,0)
%		;			
%%				\draw[dashed] (1.25,-1) -- (4.75,-1) -- (4.75,3) -- (1.25,3) -- (1.25,-1);
%	\end{circuitikz}
%	\caption{Driven RLC circuit.}
%	\label{fig:RLC_circuit}
%\end{figure}}

Suppose that true system has $C = 1$, $R(i) = 0.1 + i^2$ and $L(i) = \tanh i$. We would like to obtain approximations of the true $F$, $H$ and $G$ based on data of the form \eqref{eq:data}. To this end, we model $F$, $H$ and $G$ as described in the beginning of Section~\ref{sec:representer} with (Gaussian) kernels
\begin{equation*}
	\f(x,y) = e^{\frac{\norm{x-y}^2}{2\sigma_F^2}} I,\ \h(x,y) = e^{\frac{\norm{x-y}^2}{2\sigma_H^2}}, \ \g(x,y) = e^{\frac{\norm{x-y}^2}{2\sigma_G^2}} I,
\end{equation*}
where $I$ is the $2\times 2$ identity matrix, and $\sigma_F, \sigma_H, \sigma_G > 0$ are parameters (the standard deviations of the Gaussians). We collect $N=16$ data points by sampling a state trajectory corresponding to a 30 second piecewise constant input trajectory with 1 second intervals of random values in $[-0.1,0.1]$ The state trajectory and the state data are shown in Figure~\ref{fig:sampled_state}. The state derivative data is obtained by evaluating the vector field of the true system at the state data, but we note that we could also approximate it from the state trajectory.

The solution produced by our algorithm has a data misfit of approximately $4.2\cdot 10^{-3}$. Figure~\ref{fig:outputs} shows an example output trajectory corresponding to an input trajectory of the same type as the one used for data collection. We see that, although the misfit is fairly small, the approximated trajectory is significantly different from the true one. We attribute this to two factors. First, the optimization problem that we solve is nonconvex, hence it generally has multiple local minima. Second, the model class is overly expressive, especially when using universal kernels. Indeed, the input map $G$ can fit the data well on its own, that is, for nonzero input data, we can find $G$ such that $\dot x_i \approx G(x_i)u_i$ for all $i\in[N]$. Consequently, the proposed algorithm likely converges to a suboptimal local minimum. Neither of these factors is present if we do not impose any structure and simply model the vector field and output maps as belonging to an RKHS. And, indeed, we see that the unstructured approximation in Figure~\ref{fig:outputs} performs much better than the port-Hamiltonian one.

\begin{figure}
	\centering
	\begin{tikzpicture}
		\begin{axis}[%
			width=4cm,
			height=4cm,
			scale only axis,
			xmin=-0.27,
			xmax=0.27,
			xlabel={$q$},
			ymin=-0.27,
			ymax=0.27,
			ylabel={$\phi$},
			axis lines=box,
%			axis x line*=bottom,
%			axis y line*=left,
			]
			
			\addplot [mark=o,
			mark size=1pt,   % smaller disk
			mark options={fill=black},
			mark indices={1},
			->,
			color=black,
			forget plot]
			table[col sep=tab, x index=0, y index=1]{plot_data/x_traj.txt};
			
			\addplot [color=red, only marks, mark=x, mark options={solid, red}, line width=.75pt, forget plot]
			table[col sep=tab, x index=0, y index=1]{plot_data/x_data.txt};
			
			% Grid lines
			\foreach \x in {-0.2,-0.1,0,0.1, 0.2} {
				\addplot [gray,	opacity=0.1, thin] coordinates {(\x,-1) (\x,1)};
				\addplot [gray,	opacity=0.1, thin] coordinates {(-1, \x) (1, \x)};
			}
		\end{axis}
	\end{tikzpicture}
	\hspace{15mm}
	\caption{Sampled state trajectory corresponding to a 30 second piecewise constant input with 1 second intervals of random values in $[-0.1,0.1]$.}
	\label{fig:sampled_state}
\end{figure}
\begin{figure}
	\begin{tikzpicture}
		\begin{axis}[
			width=6cm,
			height=3cm,
			scale only axis,
			xmin=0,
			xmax=20,
			xlabel={$t$},
			ymin=-0.23,
			ymax=0.23,
			ylabel={$y$},
			axis lines=box,
%			axis x line*=bottom,
%			axis y line*=left
			]
			\addplot [color=black, forget plot]
			table[col sep=tab, x index=0, y index=1]{plot_data/y_true.txt};
			
			\addplot [color=blue, dashed, line width=.75pt, forget plot]
			table[col sep=tab, x index=0, y index=1]{plot_data/y_sunapprox.txt};
			
			\addplot [color=teal, dashdotted, line width=.75pt, forget plot]
			table[col sep=tab, x index=0, y index=1]{plot_data/y_approx_y.txt};
			
			\addplot [color=red, dashdotdotted, line width=.75pt, forget plot]
			table[col sep=tab, x index=0, y index=1]{plot_data/y_approx_ygJ.txt};
			
			% Grid lines
			\foreach \x in {0,...,30} {
				\addplot [gray,	opacity=0.1, thin] coordinates {(\x,-1) (\x,1)};
			}
			\foreach \y in {-0.2,-0.1,0,0.1, 0.2} {
				\addplot [gray,	opacity=0.1, thin] coordinates {(0, \y) (30, \y)};
			}
		\end{axis}
	\end{tikzpicture}
	\caption{Example output trajectories corresponding to a 30 second piecewise constant input with 1 second intervals of random values in $[-0.1,0.1]$: true (solid, black), unstructured approximation (dashed, blue), port-Hamiltonian approximation (dashed-dotted, teal), port-Hamiltonian approximation with additional structural assumptions (dash-dot-dotted, red). }
	\label{fig:outputs}
\end{figure}
\begin{figure}
	\centering
	\hspace{-10mm}
	\begin{tikzpicture}
		\begin{axis}[%
			align=left,
			width=6cm,
			height=3cm,
			scale only axis,
			xmin=-0.3,
			xmax=0.3,
			xlabel={$q$},
			ymin=-0.01,
			ymax=0.11,
			ylabel={$H(q,\phi)$},
			ylabel style={yshift=-3mm},
			ytick={0,0.1},
			axis lines = box,
			]
			
			\addplot [color=black, forget plot]
			table[col sep=tab, x index=0, y index=1]{plot_data/H_true1.txt};
			
			\addplot [color=teal, dashdotted, line width=.75pt, forget plot]
			table[col sep=tab, x index=0, y index=1]{plot_data/H_approx_y1.txt};
			
			\addplot [color=red, dashdotdotted, line width=.75pt, forget plot]
			table[col sep=tab, x index=0, y index=1]{plot_data/H_approx_ygJ1.txt};
			
			\addplot [color=black, forget plot]
			table[col sep=tab, x index=0, y index=1]{plot_data/H_true2.txt};
			
			\addplot [color=teal, dashdotted, line width=.75pt, forget plot]
			table[col sep=tab, x index=0, y index=1]{plot_data/H_approx_y2.txt};
			
			\addplot [color=red, dashdotdotted, line width=.75pt, forget plot]
			table[col sep=tab, x index=0, y index=1]{plot_data/H_approx_ygJ2.txt};
			
			% Grid lines
			\foreach \x in {-0.2, -0.1, 0, 0.1, 0.2} {
				\addplot [gray,	opacity=0.1, thin] coordinates {(\x,-1) (\x,1)};
			}
			\foreach \y in {-0.2,-0.1, 0, 0.1, 0.2} {
				\addplot [gray,	opacity=0.1, thin] coordinates {(-1, \y) (1, \y)};
			}
		\end{axis}
		
	\end{tikzpicture}

	\caption{Sections of the Hamiltonians: true (black, solid), approximated (teal, dash-dotted), and approximated under additional structural assumptions (red, dash-dot-dotted) at $\phi = 0$ (bottom) and $\phi = 0.2$ (top)}
	\label{fig:hamiltonians}
\end{figure}

We have only imposed that the system is port-Hamiltonian, i.e., of the form \eqref{eq:pH_dynamics}. However, our framework allows us to naturally incorporate additional prior knowledge by further restricting the structure of the port-Hamiltonian system. For instance, in this circuit example, the interconnection structure implies that the skew-symmetric part of $F$ and the input map $G$ are known. Specifically, we can assume that
\begin{equation}
	F(\cdot) = \bbm 0 & 1 \\ -1 & 0 \ebm + F_s(\cdot),\quad G(\cdot) = \bbm 0 \\ 1 \ebm
\end{equation}
where $F_s:\bbR^2\to\bbR^{2\times 2}$ is pointwise \emph{symmetric} and nonnegative (resistive structure). In other words, we only need to model the symmetric part of $F$, which we do as
\begin{equation}
	F_s(\cdot) = \phi(\cdot)\a Q_s \phi(\cdot)
\end{equation}
where $Q_s\in\HS(\calW)$ is nonnegative \emph{and self-adjoint}. It is straightforward to incorporate these additional assumptions to obtain a modified representer theorem and an appropriately adapted algorithm.

The solution produced by the modified algorithm has a data misfit of approximately $2.9\cdot 10^{-6}$. In contrast to the previous port-Hamiltonian approximation, the data misfit is significantly reduced, and the approximated output trajectory in Figure~\ref{fig:outputs} closely matches the true trajectory over the entire time horizon. While the resulting approximation is not substantially more accurate than the unstructured RKHS model in terms of trajectory prediction, it achieves comparable performance while retaining a physically meaningful decomposition. This is further illustrated in Figure~\ref{fig:hamiltonians}, where the approximated Hamiltonian under the additional structural assumptions closely matches the true Hamiltonian, in contrast to the previous approximated Hamiltonian, which exhibits a significant discrepancy. The accurate recovery of the Hamiltonian is particularly valuable, as it yields an interpretable energy function that enables stability analysis and control design. In fact, since the kernel was chosen as a Gaussian, we are guaranteed that the approximated Hamiltonian is bounded from below and, thus, the approximated port-Hamiltonian system is passive.

\section{Conclusion}\label{sec:conclusion}

We proposed a kernel-based framework for the identification of nonlinear port-Hamiltonian systems from input-state-output data. We modeled the Hamiltonian and input maps in RKHSs and represented the interconnection and dissipation structure in a kernel-based sum-of-squares form. We formulated the identification problem as an infinite-dimensional optimization problem over these model classes. As our first main result, we established a representer theorem that reduces this problem to a finite-dimensional one. As our second main result, we developed an iterative solution algorithm for the resulting nonconvex optimization problem and proved its convergence.

We also illustrated our results with an example, where we saw that the flexibility of the model classes and the nonconvexity of the estimation problem can lead to suboptimal solutions. At the same time, we saw that incorporating additional prior knowledge is straightforward and can significantly improve the quality of the identified models. Future work will investigate the influence of the kernel choice on the resulting approximation and explore how the port-Hamiltonian structure of the identified models can be exploited in control-oriented applications.

\appendix
\setcounter{definition}{0}
\setcounter{theorem}{0}
\setcounter{remark}{0}

\renewcommand{\thedefinition}{\Alph{section}.\arabic{definition}}
\renewcommand{\thetheorem}{\Alph{section}.\arabic{theorem}}
\renewcommand{\theremark}{\Alph{section}.\arabic{remark}}

In this appendix, we introduce some of the theory of reproducing kernel Hilbert spaces (RKHS's) that is relevant to the discussion in this paper. One of the most attractive features of the this theory is that certain function (operator) estimation problems have computationally tractable solutions due to the \emph{representer theorem}. More specifically, the representer theorem states that the minimizers of a particular class of optimization problems over an RKHS belong to a finite-dimensional subspace of the RKHS. Therefore, these typically infinite-dimensional problems can be solved by solving an associated finite-dimensional problem.

This appendix is split into three subsections, each with a representer theorem as the main result. First we present the standard theory of RKHS's, including kernels, the reproducing property, the Gram matrix, feature maps, and the classic representer theorem. Second, we present results concerning \emph{partial derivatives} in RKHS's, namely, we prove a partial derivative reproducing property and a representer theorem in the case where the cost of the optimization problem depends on partial derivative data. Third, we present an approach to modelling pointwise nonnegative maps and prove a representer theorem in the case where the decision variable of the optimization problem is restricted to be a pointwise nonnegative map.

\subsection{Reproducing kernel Hilbert spaces}\label{app:RKHS}
We start by discussing the standard theory of RKHS's. We refer to \cite{paulsen2016} for a treatment of the case where the kernel is scalar-valued, and \cite{micchelli2005} for the extension to the general case. We also refer to the accessible technical report \cite{kalnishkan2009}. 

Consider a Hilbert space $\calG$ of operators $G:\bbR^{n}\to\bbR^{p}$. We say that $\calG$ is a \emph{reproducing kernel Hilbert space} if it admits a reproducing kernel, defined below.
\begin{definition}\label{def:RKHS}
	A map $\g:\bbR^{n}\times\bbR^{n}\to\bbR^{p\times p}$ is a \emph{reproducing kernel} (or, simply, \emph{kernel}) for $\calG$ if:
	\begin{enumerate}
		\item $\g(\cdot, x)v\in\calG$ for all $x\in\bbR^{n}$, $v\in\bbR^{p}$;
		\item $\inner{G(x)}{v}_{\bbR^{p}} = \inner{G}{\g(\cdot, x)v}_\calG$ for all $x\in\bbR^{n}$, $v\in\bbR^{p}$, and $G\in\calG$.
	\end{enumerate}
\end{definition}

The second property in Definition~\ref{def:RKHS} is referred to as the \emph{reproducing property}. The interpretation of the reproducing property is clearer in the case where the kernel is scalar-valued, i.e., $p=1$. In this case, the reproducing property states that $G(x) = \inner{G}{\g(\cdot,x)}_\calG$ for all $G\in\calG$ and $x\in\bbR^n$. The operator $\g(\cdot, x)\in\calG$ is called a \emph{kernel section}, hence the reproducing property states that the inner product of an operator and a kernel section at a given point  \emph{reproduces} the value of the operator at that point. This implies that the point evaluation functionals in an RKHS are \emph{continuous}. In fact, that latter is often used as the defining property of an RKHS.

The class of reproducing kernels is completely characterized by two properties, namely, symmetry and positive semidefiniteness, defined below.
\begin{definition}
	The map $\g:\bbR^{n}\times\bbR^{n}\to\bbR^{p\times p}$ is:
	\begin{enumerate}
		\item \emph{symmetric} if $\g(x,y)\t = \g(y,x)$ for all $x,y\in\bbR^{n}$;
		\item \emph{positive semidefinite} if \vspace{-1mm}
		\begin{equation}\label{eq:pos_ker} \vspace{-1mm}
			\sum_{i=1}^N \inner{v_i}{\g(x_i,x_j)v_j} \geq 0
		\end{equation}
		for all $N\in\bbN$, $x_i\in\bbR^{n}$ and $v_i\in\bbR^p$, $i\in[N]$.
	\end{enumerate}
\end{definition}
Any reproducing kernel needs to be symmetric and positive semidefinite due to the reproducing property. More notably, the converse also holds and is known as the Moore-Aronszajn theorem \cite{aronszajn1950}, see \cite{micchelli2005} for the extension to the general case.

The properties of symmetry and positive semidefiniteness of the map $\g:\bbR^{n}\times\bbR^{n} \to\bbR^{n\times n}$ can be expressed using the so-called \emph{Gram matrix}. Given $N\in\bbN$ and $x_i\in\bbR^{n}$, $i\in[N]$, we define the Gram matrix $\G\in\bbR^{pN\times pN}$ as
\begin{equation}\label{eq:Gram}
	\G = \bbm \g(x_1,x_1) & \cdots & \g(x_1,x_n) \\ \vdots & \ddots & \vdots \\ \g(x_n,x_1) & \cdots & \g(x_n,x_n) \ebm,
\end{equation}
Then, \eqref{eq:pos_ker} reads
\begin{equation}
	\inner{\G(v_1,\dots,v_n)}{(v_1,\dots,v_n)}_{\bbR^{pN}} \geq 0,
\end{equation}
hence the map $\g$ is symmetric and positive semidefinite if and only if, for all $N\in\bbN$ and $x_i\in\bbR^{n}$, $i\in[N]$, the corresponding Gram matrix $\G$ in \eqref{eq:Gram} is symmetric positive semi-definite.

Symmetric positive semidefinite maps  $\g:\bR{n}\times\bR{n}\to\bR{p\times p}$ can be defined via an auxiliary Hilbert space $\calW$ and a map $\phi: \bR{n}\to\B(\bR{p},\calW)$ by setting $\g(x,y) = \phi(x)^*\phi(y)$ for all $x,y\in\bbR^n$. The space $\calW$ is referred to as a \emph{feature space} and the map $\phi$ as a \emph{feature map}. The following proposition, see \cite[Theorem~7]{vanwaarde2021arxiv}, states that each symmetric positive semidefinite map has an associated feature map and vice versa.
\begin{proposition}\label{prop:feature_map}
	The map $\g:\bbR^n\times\bbR^n\to\bR{p\times p}$ is symmetric and positive semidefinite if and only if there exists a Hilbert space $\calW$ and a feature map $\phi: \bR{n}\to\B(\bR{p},\calW)$ such that $\g(x,y) = \phi(x)^*\phi(y)$ for all $x,y\in\bbR^n$.
\end{proposition}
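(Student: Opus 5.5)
We prove the two implications separately; the "if" direction is a direct computation and the "only if" direction uses the RKHS associated with $\g$ as the feature space. For the "if" direction, suppose $\g(x,y)=\phi(x)^*\phi(y)$ for some Hilbert space $\calW$ and $\phi:\bR{n}\to\B(\bR{p},\calW)$.

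\emph{Symmetry.} We have
\begin{equation*}
\g(x,y)\t=(\phi(x)^*\phi(y))^*=\phi(y)^*\phi(x)=\g(y,x),
\end{equation*}
using that the adjoint of a real matrix is its transpose.

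\emph{Positive semidefiniteness.} Write $w=\sum_{i=1}^N\phi(x_i)v_i\in\calW$. Then
\begin{equation*}
\sum_{i,j=1}^N\inner{v_i}{\phi(x_i)^*\phi(x_j)v_j}=\sum_{i,j=1}^N\inner{\phi(x_i)v_i}{\phi(x_j)v_j}_\calW=\norm{w}_\calW^2\ge 0.
\end{equation*}
(Note that \eqref{eq:pos_ker} is meant as a double sum over $i,j$.)

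For the "only if" direction, let $\g$ be symmetric and positive semidefinite. By the Moore--Aronszajn theorem in its operator-valued form \cite{aronszajn1950,micchelli2005}, there is an RKHS $\calG$ of maps $\bR{n}\to\bR{p}$ with reproducing kernel $\g$. Take $\calW=\calG$ and define $\phi(x)v=\g(\cdot,x)v$. This lies in $\calG$ by property 1 of Definition~\ref{def:RKHS}, and it is linear in $v$.

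\emph{Boundedness of $\phi(x)$.} By the reproducing property,
\begin{equation*}
\norm{\g(\cdot,x)v}_\calG^2=\inner{\g(x,x)v}{v}\le\norm{\g(x,x)}\,\norm{v}^2,
\end{equation*}
so $\phi(x)\in\B(\bR{p},\calG)$.

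\emph{Identifying the adjoint.} For $G\in\calG$ and $v\in\bR{p}$, the reproducing property gives
\begin{equation*}
\inner{\phi(x)^*G}{v}=\inner{G}{\g(\cdot,x)v}_\calG=\inner{G(x)}{v},
\end{equation*}
so $\phi(x)^*G=G(x)$ is point evaluation at $x$.

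\emph{Conclusion.} Therefore
\begin{equation*}
\phi(x)^*\phi(y)v=(\g(\cdot,y)v)(x)=\g(x,y)v
\end{equation*}
for all $v$, that is, $\g(x,y)=\phi(x)^*\phi(y)$.

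The only substantive step is the existence of the RKHS $\calG$ with kernel $\g$. If it is not taken from the literature, we would build it directly. Start with the span of the sections $\g(\cdot,x)v$ and equip it with the inner product
\begin{equation*}
\inner{\g(\cdot,x)v}{\g(\cdot,y)w}=\inner{v}{\g(x,y)w}.
\end{equation*}
Symmetry of $\g$ makes this a symmetric bilinear form, and positive semidefiniteness makes it nonnegative. The delicate part is showing the form is \emph{definite}: a Cauchy--Schwarz argument, together with the reproducing identity on the span, shows that an element of zero norm vanishes pointwise. We then complete the pre-Hilbert space and identify the completion with functions via point evaluation, which is continuous on the span. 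For the proposition itself, even this is avoidable: it suffices to take $\calW$ to be the abstract completion, modulo null vectors, of the span of the formal symbols $(x,v)$ under the same form. Then $\phi(x)^*\phi(y)=\g(x,y)$ holds by construction, without identifying elements of $\calW$ as functions.
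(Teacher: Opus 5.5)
Your proof is correct in both directions, and you are right that \eqref{eq:pos_ker} has to be read as a double sum over $i,j$. The paper does not prove this statement itself and instead cites \cite[Theorem~7]{vanwaarde2021arxiv}; your choice $\calW=\calG$, $\phi(x)v=\g(\cdot,x)v$, with $\phi(x)^*G=G(x)$ obtained from the reproducing property, is the standard argument and the same one the paper uses in the proof of Proposition~\ref{prop:partial_Gram}.
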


The feature map is central in developing kernel-based algorithms and can allow for simpler analysis. Namely, it can be shown that the reproducing kernel Hilbert space $\calG$ with reproducing kernel $\g:\bbR^n\times\bbR^n\to\bR{p\times p}$ and associated feature map  $\phi: \bR{n}\to\B(\bR{p},\calW)$ consists of operators of the form $G = \phi(\cdot)^* w$, where $w$ belongs to the closure of $\spn\set{\phi(x)v}{x\in\bbR^n,v\in\bbR^p}\subset\calW$, and $\snorm{G}_\calG = \snorm{w}_\calW$. We refer to \cite[Theorem~5 and Theorem~6]{kalnishkan2009} for a proof of this statement in the case where the kernel is scalar-valued. The proof in the general case follows similar reasoning.

As mentioned at the beginning of this appendix, an attractive feature of the theory of RKHS's is that certain function estimation problems have computationally tractable solutions. In particular, let $x_i\in\bbR^n$, $i\in[N]$, and consider the  problem
\begin{equation}\label{eq:min_representer_G}
	\begin{aligned}
		\min_{G\in\calG} \quad& L(G(x_1), \dots,G(x_N)) + \alpha\norm{G}_\calG,
	\end{aligned}
\end{equation}
where $\alpha > 0$ and $L:(\bbR^p)^N\to\bbR_+$ is a continuous loss function. In the context of estimating a function from data, the term $L(G(x_1),\dots,G(x_N))$ measures the data misfit, while $\norm{G}_{\calG}$ measures the complexity of $G$ and acts as a regularizer. Although $\calG$ is typically infinite-dimensional, the minimizer of \eqref{eq:min_representer_G} belongs to a finite-dimensional subspace of $\calG$. This is the content of the representer theorem \cite[Theorem~4.2]{micchelli2005}, which follows from the following technical lemma.
\begin{lemma}\label{lem:representer_G}
	Let $x_i\in\bbR^n$, $i\in[N]$. For all $G\in\calG$, there exist $v_i\in\bbR^n$, $i\in[N]$, such that the map
	\begin{align}\label{eq:representer_G}
		\bar G &= \sum_{i=1}^{N} \g(\cdot, x_i) v_i
	\end{align}
	satisfies $\bar G(x_i) = G(x_i)$, $i\in[N]$, and $\norm{\bar G}_\calG \leq \norm{G}_\calG$.
\end{lemma}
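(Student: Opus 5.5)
We prove Lemma~\ref{lem:representer_G} by orthogonal projection onto the finite-dimensional subspace spanned by the kernel sections at the data points. Define
\begin{equation*}
	\calG_N = \set{\textstyle\sum_{i=1}^{N} \g(\cdot,x_i)v_i}{v_i\in\bbR^n,\ i\in[N]}\subset\calG.
\end{equation*}
By the first property in Definition~\ref{def:RKHS}, each $\g(\cdot,x_i)v_i$ lies in $\calG$, so $\calG_N$ is a subspace of $\calG$. It is spanned by finitely many elements, hence finite-dimensional and therefore closed. The projection theorem then gives a decomposition $\calG = \calG_N\oplus\calG_N^\perp$. For a given $G\in\calG$ we write $G = \bar G + G^\perp$ with $\bar G\in\calG_N$ and $G^\perp\in\calG_N^\perp$. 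Since $\bar G\in\calG_N$, it has the form \eqref{eq:representer_G} for some $v_i\in\bbR^n$.

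Next we check that interpolation is preserved. Fix $i\in[N]$ and any $w\in\bbR^n$. By the reproducing property,
\begin{equation*}
	\inner{G^\perp(x_i)}{w}_{\bbR^n} = \inner{G^\perp}{\g(\cdot,x_i)w}_\calG = 0,
\end{equation*}
because $\g(\cdot,x_i)w\in\calG_N$ and $G^\perp\perp\calG_N$. Since $w$ is arbitrary, $G^\perp(x_i)=0$. Linearity of point evaluation then gives $G(x_i)=\bar G(x_i)+G^\perp(x_i)=\bar G(x_i)$ for all $i\in[N]$.

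The norm bound follows from Pythagoras: $\norm{G}_\calG^2 = \norm{\bar G}_\calG^2+\norm{G^\perp}_\calG^2\geq\norm{\bar G}_\calG^2$.

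The argument is essentially routine. The only point needing care is that $\calG_N$ is closed, so that the orthogonal projection exists, and this is immediate from finite-dimensionality. Apart from that, the proof uses only the two defining properties of the kernel in Definition~\ref{def:RKHS}.
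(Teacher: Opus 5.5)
Your argument is correct: the span of the kernel sections $\g(\cdot,x_i)v_i$ is finite-dimensional and hence closed, the reproducing property forces the orthogonal component to vanish at every $x_i$, and Pythagoras gives the norm bound. The paper states this lemma without proof, deferring to \cite{micchelli2005}, but its proof of the analogous Lemma~\ref{lem:representer_H} uses exactly this orthogonal-projection argument, with partial-derivative kernel sections and the partial-derivative reproducing property in place of $\g(\cdot,x_i)v_i$ and Definition~\ref{def:RKHS}.
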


Then, the representer theorem states the following.
\begin{proposition}\label{prop:representer}
	Let $x_i\in\bbR^n$, $i\in[N]$. Suppose that $\alpha > 0$ and $L: (\bbR^p)^N\to\bbR_+$ is continuous. Then, problem \eqref{eq:min_representer_G} has a minimizer of the form \eqref{eq:representer_G}.
\end{proposition}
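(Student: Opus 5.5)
The plan is to prove the standard representer theorem (Proposition~\ref{prop:representer}) from Lemma~\ref{lem:representer_G}, using the same compactness device as in the proof of Theorem~\ref{thm:representer_main}. Let $\bar\calG\subset\calG$ be the finite-dimensional subspace of maps of the form \eqref{eq:representer_G}, and let $c(G) = L(G(x_1),\dots,G(x_N)) + \alpha\norm{G}_\calG$ denote the cost.

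\textbf{Existence of a minimizer on a compact set.} Fix any $G_0\in\bar\calG$, for instance $G_0=0$, and set $c_0=c(G_0)$. Define
\begin{equation*}
\bar\calG_0=\set{G\in\bar\calG}{\alpha\norm{G}_\calG\le c_0}.
\end{equation*}
Since $\bar\calG$ is finite-dimensional, $\bar\calG_0$ is closed and bounded, hence compact. The cost is continuous on $\bar\calG$ for two reasons. First, point evaluation is continuous by the reproducing property, since $\abs{\inner{G(x)}{v}}\le\norm{G}_\calG\norm{\g(\cdot,x)v}_\calG$. Second, $L$ and the norm are continuous. By the extreme value theorem, $c$ therefore attains a minimum on $\bar\calG_0$ at some $\bar G_0$, and $c(\bar G_0)\le c_0$.

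\textbf{Global optimality.} Let $G\in\calG$ be arbitrary. Lemma~\ref{lem:representer_G} gives $\bar G\in\bar\calG$ with $\bar G(x_i)=G(x_i)$ for all $i$ and $\norm{\bar G}_\calG\le\norm{G}_\calG$. Hence $c(\bar G)\le c(G)$, because the loss term is unchanged and the regularizer does not increase. We now split into two cases.
\begin{itemize}
\item[(i)] If $c(\bar G)\le c_0$, then $\alpha\norm{\bar G}_\calG\le c(\bar G)\le c_0$, using $L\ge0$. So $\bar G\in\bar\calG_0$, and therefore $c(\bar G_0)\le c(\bar G)$.
\item[(ii)] If $c(\bar G)>c_0$, then $c(\bar G_0)\le c_0<c(\bar G)$.
\end{itemize}
In either case $c(\bar G_0)\le c(G)$. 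So $\bar G_0$ minimizes the cost over all of $\calG$, and it has the form \eqref{eq:representer_G}.

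\textbf{Main obstacle.} The only delicate point is that the infinite-dimensional problem need not obviously attain its infimum. This is exactly why we restrict to the compact sublevel set of the finite-dimensional subspace, and why the nonnegativity of $L$ is needed in case (i) to bound the norm of $\bar G$. If Lemma~\ref{lem:representer_G} itself had to be proven, the step would be to take $\bar G$ as the orthogonal projection of $G$ onto $\bar\calG$. The reproducing property then gives $\inner{G-\bar G}{\g(\cdot,x_i)v}=0$ for all $v$, so the evaluations agree, and Pythagoras gives the norm inequality.
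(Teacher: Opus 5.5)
Your proof is correct and is essentially the paper's argument: the paper says Proposition~\ref{prop:representer} follows from Lemma~\ref{lem:representer_G}, and the proof it writes out for the analogous Propositions~\ref{prop:representer_H} and~\ref{prop:representer_F} (and for Theorem~\ref{thm:representer_main}) uses exactly your device of minimizing over a compact norm-sublevel set of the finite-dimensional subspace and then splitting into the cases $c(\bar G)\le c_0$ and $c(\bar G)>c_0$. Your explicit remarks on the continuity of point evaluation and on using $L\ge 0$ to bound $\alpha\norm{\bar G}_\calG$ supply details the paper leaves implicit.
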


As a a consequence of the representer theorem, we can solve \eqref{eq:min_representer_G} by substituting $G$ of the form \eqref{eq:representer_G}, and solving for the coefficient vectors $v_i\in\bbR^p$, $i\in[N]$. This yields a finite-dimensional optimization problem that can be compactly expressed using the Gram matrix. Namely, we can show that any operator $G$ of the form \eqref{eq:representer_G} satisfies
\begin{equation}\label{eq:representer_eval_norm}
	G(x_i) = \G_i v,\quad \norm{G}^2_\calG = v\t \G v,
\end{equation}
where $v = \col(v_1,\dots,v_N)$, $\G_i\in\bbR^{n\times nN}$ is given by
\begin{equation}\label{eq:gram_Gi}
	\G_i = \bbm \g(x_i,x_1) & \cdots & \g(x_i,x_N) \ebm
\end{equation}
for all $i\in[N]$, and $\G = \col(\G_1,\dots,\G_N)$ is the Gram matrix associated with $\g$ and $x_i\in\bbR^n$, $i\in[N]$. Consequently, a minimizer of \eqref{eq:min_representer_G} can be obtained by solving
\begin{equation}\label{eq:min_representer_G_finite}
	\min_{v\in\bbR^{pN}} \quad L(\G_1 v, \dots, \G_Nv) + \alpha v\t \G v
\end{equation}
and substituting the resulting minimizer in \eqref{eq:representer_G}.

\subsection{Partial derivatives in reproducing kernel Hilbert spaces}\label{app:derivative}

Next, we discuss an extension of the standard theory of RKHS's that deals with partial derivatives. In particular, we show that RKHS's whose kernels are (sufficiently) differentiable have a partial derivative reproducing property. This allows us to obtain a representer theorem for a class of optimization problems where the cost depends on partial derivative data. The latter is essential for the solution of the problem considered in this paper since the data misfit \eqref{eq:misfit} depends on the partial derivatives of the Hamiltonian $H$. The results in this subsection can be found in \cite{zhou2008}. The authors of \cite{zhou2008} consider operators whose domain is a \emph{compact} subset of $\bbR^n$. Here, we consider the extension to operators whose domain is the whole space $\bbR^n$. Although this extension is fairly straightforward, we still provide the relevant proofs for the sake of completeness.

Throughout this subsection, we consider an RKHS $\calH$ with a scalar-valued kernel $\h:\bbR^n\times\bbR^n\to\bbR$ (the notation is chosen to be consistent with the notation in the main body of this paper). We are interested in solving an optimization problem of the form \eqref{eq:min_representer_G}, where the cost depends on values of the \emph{partial derivatives} of an operator $H\in\calH$. For this problem to be well-posed, the operators in $\calH$ must be differentiable, which leads to the following proposition.

\begin{proposition}\label{prop:partial_reproducing}
	Suppose that $\h$ is twice continuously differentiable. Then, the following statements hold for all $i\in[n]$:		
	\begin{enumerate}
		\item $\partials_i\h (\cdot, y)\in\calH$ for all $y\in\bbR^{n}$;
		\item $\partial_i H(y)= \inner{H}{\partials_i\h(\cdot, y)}_\calH$ for all $H\in\calH$ and $y\in\bbR^{n}$.
	\end{enumerate}
\end{proposition}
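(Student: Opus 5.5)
The plan is to obtain both statements from the difference quotient of $H$ together with the ordinary reproducing property. For fixed $y\in\bbR^n$, $i\in[n]$, let $e_i$ be the $i$-th standard basis vector and set, for $t\neq 0$,
\[
k_t = \frac{1}{t}\left(\h(\cdot,y+te_i) - \h(\cdot,y)\right)\in\calH .
\]
By the reproducing property (Definition~\ref{def:RKHS} with $p=1$),
\[
\inner{H}{k_t}_\calH = \frac{1}{t}\left(H(y+te_i)-H(y)\right)
\]
for all $H\in\calH$. The core of the argument is to show that $k_t$ is Cauchy in $\calH$ as $t\to 0$. Once this is done, its limit $k\in\calH$ exists, and continuity of the inner product gives $\inner{H}{k}_\calH = \lim_{t\to0}(H(y+te_i)-H(y))/t$. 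In particular, every $H\in\calH$ has a $i$-th partial derivative at $y$ equal to $\inner{H}{k}_\calH$.

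To prove the Cauchy property, I would expand $\norm{k_t-k_s}_\calH^2$ with the reproducing property. This gives a combination of kernel values $\h(y+ae_i,y+be_i)$ with $a,b\in\{0,s,t\}$:
\[
\inner{k_t}{k_s}_\calH = \frac{1}{ts}\left(\h(y+te_i,y+se_i)-\h(y+te_i,y)-\h(y,y+se_i)+\h(y,y)\right).
\]
This is a mixed second-order difference quotient of $\h$. Because $\h$ is $C^2$, the mean value theorem applied twice shows that it tends to $\partialf_i\partials_i\h(y,y)$ as $(t,s)\to(0,0)$. Hence $\inner{k_t}{k_t}$, $\inner{k_s}{k_s}$ and $\inner{k_t}{k_s}$ all converge to the same value, so $\norm{k_t-k_s}^2\to0$. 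I expect this to be the main technical step. The delicate point is ensuring that the limit is joint in $(t,s)$ rather than iterated, and continuity of the mixed second partial is exactly what delivers this.

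Next I would identify $k$ with $\partials_i\h(\cdot,y)$. Norm convergence in an RKHS implies pointwise convergence, since $\abs{G(x)}\le\norm{G}_\calH\,\h(x,x)^{1/2}$. So for every $x$,
\[
k(x)=\lim_{t\to0}\frac{1}{t}\left(\h(x,y+te_i)-\h(x,y)\right)=\partials_i\h(x,y),
\]
which shows $\partials_i\h(\cdot,y)=k\in\calH$; this is statement 1. Substituting into the identity from the first step gives $\partial_iH(y)=\inner{H}{\partials_i\h(\cdot,y)}_\calH$, which is statement 2.

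Working on all of $\bbR^n$ instead of a compact set, as in \cite{zhou2008}, causes no difficulty. The argument is local: it only uses $\h$ near $(y,y)$ and near $(x,y)$, so compactness is never required.
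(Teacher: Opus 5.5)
Your proof is correct, but it differs from the paper's at the central step. The paper only shows that the difference quotients are \emph{bounded} in $\calH$, using the diagonal limit $\norm{h_\lambda}_\calH^2\to\partialf_i\partials_i\h(y,y)$. It then uses reflexivity and the Eberlein--\v{S}mulian theorem to extract a weakly convergent subsequence $h_{\lambda_k}\rightharpoonup h$, and identifies $h=\partials_i\h(\cdot,y)$ by pairing with kernel sections. You instead show that the quotients are Cauchy in norm. This needs the \emph{joint} limit of the mixed second difference $\inner{k_t}{k_s}_\calH$ as $(t,s)\to(0,0)$, which is exactly where continuity of the mixed second partial is used, and your mean-value argument handles it correctly. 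Your route is elementary, gives norm convergence, and controls the full two-sided limit $t\to0$, so existence of $\partial_iH(y)$ for every $H\in\calH$ comes out of the argument itself. The paper's version, as written, uses only $\lambda>0$ and a subsequence. It then writes $\lim_k (H(y+\lambda_k e_i)-H(y))/\lambda_k=\partial_iH(y)$ as though the derivative were already known to exist. To close this, one must add that every sequence $\lambda_k\to0$ (of either sign) has a subsequence with the same weak limit $\partials_i\h(\cdot,y)$, so the whole family converges weakly. In exchange, the weak-compactness route only needs boundedness of the diagonal second difference quotient plus differentiability of $\h$ in its second argument, so it works under somewhat weaker smoothness than yours. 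Under the stated $C^2$ hypothesis, both arguments are complete.
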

\begin{IEEEproof}
	Let $i\in[N]$ and $y\in\bbR^{n}$. For each $\lambda > 0$, the map
	\begin{equation}
		h_\lambda = \frac{\h(\cdot, y + \lambda e_i)- \h(\cdot,y)}{\lambda}
	\end{equation}
	is such that $h_\lambda\in\calH$ and
	\begin{equation}\label{eq:pointwise_hk}
		\lim_{\lambda\to0} h_\lambda(x) = \partials_i\h(x,y)
	\end{equation}
	for all $x\in\bbR^n$. The reproducing property implies that
	\begin{align}
		\norm{h_\lambda}_\calH^2 &= \frac{h_\lambda(y+\lambda e_i) - h_\lambda(y)}{\lambda}.
	\end{align}
	Since $\h$ is twice continuously differentiable, it follows that
	\begin{equation}
		\lim_{\lambda\to 0}\norm{h_\lambda}_\calH^2 = \partial_ih_\lambda(y) = \partialf_i \partials_i \h(y,y),
	\end{equation}
	hence the set $\set{h_\lambda}{0<\lambda \leq \delta}\subset\calH$ is bounded for sufficiently small $\delta >0$. Every bounded sequence in a Hilbert space has a weakly convergent subsequence. This follows from the Eberlein-Smulian theorem \cite[Theorem~13.1]{conway2007}, where we note that the unit ball in $\calH$ is weakly compact because Hilbert spaces are reflexive \cite[Theorem~4.2]{conway2007}. Therefore, there exists a sequence $\lambda_k>0$, $k\in\bbN$, that converges to $0$, while the sequence $h_{\lambda_k}$, $k\in\bbN$, converges weakly to some $h\in\calH$, i.e.,
	\begin{equation}\label{eq:weak_hk}
		\lim_{k\to\infty} \inner{h_{\lambda_k}}{H}_\calH = \inner{h}{H}_\calH
	\end{equation}
	for all $H\in\calH$. Taking $H = \h(\cdot,x)$ yields 
	\begin{equation}\label{eq:weak_hkx}
		\lim_{k\to\infty} h_{\lambda_k}(x) = h(x)
	\end{equation}
	for all $x\in\bbR^n$. Since  $\lambda_k \to 0$ as $k\to\infty$, \eqref{eq:pointwise_hk} and \eqref{eq:weak_hkx} imply that $ \partials_i\h(\cdot,y) = h\in\calH$. Finally, we note that
	\begin{equation*}
		\lim_{k\to\infty}\inner{h_{\lambda_k}}{H}_\calH =  \lim_{k\to\infty}\frac{H(y+\lambda_{k}e_i) - H(y)}{\lambda_{k}} = \partial_i H(y)
	\end{equation*}
	for all $H\in\calH$, hence \eqref{eq:weak_hk} yields
	\begin{equation}
		\partial_i H(y)= \inner{\partials_i\h(\cdot, y)}{H}_\calH = \inner{H}{\partials_i\h(\cdot, y)}_\calH
	\end{equation}
	for all $H\in\calH$, which concludes the proof.
\end{IEEEproof}

Recall the defining properties of the reproducing kernel $\h$ of the RKHS $\calH$, namely, all kernel sections of $\h$ belong to $\calH$ and the reproducing property holds. Proposition~\ref{prop:partial_reproducing} tells us that analogous properties hold for partial derivatives, namely, all \emph{partial derivative} kernel sections of $\h$ belong to $\calH$ and a \emph{partial derivative} reproducing property holds. In fact, this partial derivative reproducing property implicitly shows that the operators in $\calH$ are differentiable, i.e., $\partial_i H(y)$ exists for all $i\in[N]$, $H\in\calH$, and $y\in\bbR^n$. Furthermore, it allows us to obtain a representer theorem for optimization problems where the cost depends on partial derivative data. To this end, consider the following  technical lemma.

\begin{lemma}\label{lem:representer_H}
	Let $x_i\in\bbR^n$, $i\in[N]$. For all $H\in\calH$, there exist $c_i\in\bbR^n$, $i\in[N]$, such that the map
	\begin{align}\label{eq:representer_H}
		\bar H &= \sum_{i=1}^{N} \partials\h(\cdot, x_i)\t c_i\in \calH
	\end{align}
	satisfies $\partial \bar H(x_i) = \partial H(x_i)$, $i\in[N]$, and $\snorm{\bar H}_\calH \leq \snorm{H}_\calH$.
\end{lemma}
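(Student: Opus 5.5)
The plan is the standard orthogonal-projection argument, with ordinary kernel sections replaced by the partial derivative kernel sections of Proposition~\ref{prop:partial_reproducing}. Assume $\h$ is twice continuously differentiable, as in that proposition. Define the finite-dimensional subspace
\begin{equation*}
	\bar\calH = \spn\set{\partials_j\h(\cdot,x_i)}{i\in[N],\ j\in[n]}\subset\calH.
\end{equation*}
It is contained in $\calH$ by the first statement of Proposition~\ref{prop:partial_reproducing}, and it is closed because it is finite-dimensional. Hence the orthogonal decomposition $\calH = \bar\calH\oplus\bar\calH^\perp$ is available. Given $H\in\calH$, I would set $\bar H$ to be the orthogonal projection of $H$ onto $\bar\calH$ and write $H = \bar H + H^\perp$ with $H^\perp\in\bar\calH^\perp$.

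Every element of $\bar\calH$ is a linear combination $\sum_{i}\sum_{j}(c_i)_j\,\partials_j\h(\cdot,x_i)$. This is exactly $\sum_{i=1}^N \partials\h(\cdot,x_i)\t c_i$ with $c_i\in\bbR^n$ collecting the coefficients, so $\bar H$ has the form \eqref{eq:representer_H}. The norm inequality follows from Pythagoras:
\begin{equation*}
	\norm{H}_\calH^2 = \norm{\bar H}_\calH^2 + \norm{H^\perp}_\calH^2 \geq \norm{\bar H}_\calH^2.
\end{equation*}

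For interpolation of the derivatives I would use the partial derivative reproducing property (the second statement of Proposition~\ref{prop:partial_reproducing}). For each $i\in[N]$ and $j\in[n]$,
\begin{equation*}
	\partial_j H(x_i) = \inner{\bar H}{\partials_j\h(\cdot,x_i)}_\calH + \inner{H^\perp}{\partials_j\h(\cdot,x_i)}_\calH = \partial_j\bar H(x_i),
\end{equation*}
because $H^\perp$ is orthogonal to every generator of $\bar\calH$. Stacking over $j$ gives $\partial\bar H(x_i) = \partial H(x_i)$.

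There is no real obstacle here. The substantive ingredient, namely that $\partials_j\h(\cdot,y)$ lies in $\calH$ and reproduces $\partial_j H(y)$, is already supplied by Proposition~\ref{prop:partial_reproducing}. The remaining points only need a brief check: that finite-dimensionality makes the projection well defined, and that the indexing convention $\partials\h(\cdot,x_i)\t c_i=\sum_j (c_i)_j\partials_j\h(\cdot,x_i)$ matches \eqref{eq:representer_H}.
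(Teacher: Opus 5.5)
Your proposal is correct and matches the paper's proof: project $H$ orthogonally onto $\bar\calH=\spn\{\partials_j\h(\cdot,x_i)\}$, use the partial derivative reproducing property of Proposition~\ref{prop:partial_reproducing} to see that the orthogonal component does not change $\partial_j H(x_i)$, and use Pythagoras for the norm bound. Your squared identity $\norm{H}_\calH^2 = \norm{\bar H}_\calH^2 + \norm{H^\perp}_\calH^2$ is the correct form; the paper's display writes it without the squares.
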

\begin{IEEEproof}
	Consider the finite-dimensional subspace
	\begin{equation}\label{eq:bar_calH}
		\bar \calH = \spn\set{\partials_j \h(\cdot,x_i)}{i\in[N],\ j\in[n]} \subset\calH,
	\end{equation}
	and its orthogonal complement $\bar \calH^\perp\subset\calH$.
	Write $H =  \bar H + \bar H^\perp$, where $\bar H\in\bar \calH$ and $\bar H^\perp\in\bar \calH^\perp$, and note that
	\begin{equation*}
		\partial_jH(x_i) = \inner{H}{\partials_j\h(\cdot,x_i)} = \inner{ \bar H}{\partials_j\h(\cdot,x_i)} = \partial_j \bar H(x_i)
	\end{equation*}
	for all $j\in[n]$ and $i\in[N]$. Furthermore, we have that
	\begin{equation}
		\norm{H}_\calH = \snorm{\bar H}_\calH + \snorm{\bar H^\perp}_\calH,
	\end{equation}
	hence $\norm{\bar H} \leq \norm{H}$. Since $\bar H\in\bar \calH$, we can write
	\begin{equation}
		\bar H = \sum_{i=1}^{N}\sum_{j=1}^{n}\partials_j\h(\cdot,x_i)c_{ij}
	\end{equation}
	where $c_{ij}\in\bbR$ for all $i\in[N]$ and $j\in[n]$. The latter can be compactly written as in \eqref{eq:representer_H}, which concludes the proof.
\end{IEEEproof}
 
The following representer theorem is a special case of \cite[Theorem~2]{zhou2008}, which we prove for the sake of completeness.
\begin{proposition}\label{prop:representer_H}
	Let $x_i\in\bbR^n$, $i\in[N]$. Suppose that $\beta > 0$ and $L: (\bbR^n)^N\to\bbR_+$ is continuous. Then,  the problem
	\begin{equation}\label{eq:min_representer_H}
		\begin{aligned}
			\min_{H\in\calH} \quad& L(\partial H(x_1), \dots, \partial H(x_N)) + \beta\norm{H}_\calH ,
		\end{aligned}
	\end{equation}
	has a minimizer of the form \eqref{eq:representer_H}.
\end{proposition}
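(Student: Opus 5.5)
The plan follows the pattern of the proof of Theorem~\ref{thm:representer_main}: first show that the search can be restricted to a finite-dimensional set, then establish existence of a minimizer on a compact piece of that set by the extreme value theorem, and finally lift it to a minimizer over all of $\calH$. Let $\bar\calH$ be the finite-dimensional subspace in \eqref{eq:bar_calH}, and write $c(H) = L(\partial H(x_1),\dots,\partial H(x_N)) + \beta\norm{H}_\calH$ for the cost of \eqref{eq:min_representer_H}.

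The first step is the reduction to $\bar\calH$. For any $H\in\calH$, Lemma~\ref{lem:representer_H} provides $\bar H\in\bar\calH$ of the form \eqref{eq:representer_H} with $\partial\bar H(x_i)=\partial H(x_i)$ for all $i\in[N]$ and $\norm{\bar H}_\calH\le\norm{H}_\calH$. The loss term is unchanged and the regularizer does not increase, so $c(\bar H)\le c(H)$. Hence $\inf_{\calH} c = \inf_{\bar\calH} c$, and any minimizer of $c$ over $\bar\calH$ is a minimizer over $\calH$.

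The second step is existence on $\bar\calH$, and this is the only real issue: $\bar\calH$ is not compact, and $L$ is merely continuous. Fix any $H_0\in\bar\calH$ (e.g.\ $H_0=0$) and set $c_0=c(H_0)$. Consider the set $K=\set{H\in\bar\calH}{\beta\norm{H}_\calH\le c_0}$. It is closed and bounded in a finite-dimensional space, hence compact, and it contains $H_0$.

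Next, $c$ restricted to $\bar\calH$ is continuous. By Proposition~\ref{prop:partial_reproducing}, $\partial_j H(x_i)=\inner{H}{\partials_j\h(\cdot,x_i)}_\calH$, which is a continuous linear functional of $H$. Composing these functionals with the continuous $L$ and adding the continuous norm gives continuity of $c$. The extreme value theorem then yields a minimizer $H^\ast$ of $c$ on $K$.

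Points outside $K$ cannot do better. If $H\in\bar\calH\setminus K$, then $L\ge0$ gives $c(H)\ge\beta\norm{H}_\calH>c_0\ge c(H^\ast)$. So $H^\ast$ minimizes $c$ on all of $\bar\calH$, and by the first step it minimizes \eqref{eq:min_representer_H} on $\calH$.

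Finally, $H^\ast\in\bar\calH$ can be written as $\sum_{i,j}\partials_j\h(\cdot,x_i)c_{ij}$. Collecting the coefficients into vectors $c_i\in\bbR^n$ puts $H^\ast$ in the form \eqref{eq:representer_H}, which completes the proof.
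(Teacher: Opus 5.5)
Your proposal is correct and follows essentially the same route as the paper: reduce to the finite-dimensional span $\bar\calH$ via Lemma~\ref{lem:representer_H}, minimize over the compact set $\{H\in\bar\calH : \beta\norm{H}_\calH\le c(H_0)\}$ by the extreme value theorem, and use $L\ge 0$ to show nothing outside that set does better. The differences are only organizational: you directly bound the cost of points of $\bar\calH$ outside $K$ from below by $c_0$, where the paper compares $c(\bar H)$ with $c(H_0)$ in two cases. You also justify the continuity of $c$ explicitly via the derivative reproducing property, which the paper takes for granted.
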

\begin{IEEEproof}
	Let the subspace $\bar\calH\subset\calH$ be given by \eqref{eq:bar_calH}, and the map $c:\calH\to\bbR_+$ be given by
	\begin{equation}
		c(H) = L(\partial H(x_1), \dots, \partial H(x_N)) + \beta\norm{H}_\calH.
	\end{equation}
	Let $H_0\in\bar \calH$ be arbitrary and $\bar\calH_0$ be the closed and bounded subset of operators $H\in\bar\calH$ such that $\beta \norm{H}_\calH \leq c(H_0)$. Note that $\bar\calH_0$ is compact because $\bar\calH$ is finite-dimensional. Let
	\begin{equation}
		\bar H_0 = \argmin_{H\in\bar\calH_0} c(H),
	\end{equation}
	which exists because $c$ is continuous and bounded from below, and $\bar\calH_0$ is compact. We claim that $\bar H_0$ is a minimizer of \eqref{eq:min_representer_H}. Indeed, due to Lemma~\ref{lem:representer_H}, for all $H\in\calH$, there exists $\bar H \in\bar\calH$ such that $c(\bar H) \leq c(H)$.  If $c(\bar H) \leq c(H_0)$, then, in particular, $\beta\norm{\bar H} \leq c(H_0)$, hence $\bar H\in\bar\calH_0$ and, thus, $c(\bar H_0) \leq c(\bar H)$ by definition of $\bar H_0$. On the other hand, if $c(H_0) < c(\bar H)$, then $c(\bar H_0) < c(\bar H)$ because $H_0\in\bar\calH_0$ and, thus, $c(\bar H_0) \leq c(H_0)$. It follows that $c(\bar H_0) \leq c(H)$ for all $H\in\calH$, hence $\bar H_0$ is a minimizer of \eqref{eq:min_representer_H}. Since $\bar H_0 \in \bar\calH$, it is indeed of the form \eqref{eq:representer_H}, which concludes the proof.
	\end{IEEEproof}

	Just as the classical representer theorem, the representer theorem stated in Proposition~\ref{prop:representer_H} provides a tractable solution to \eqref{eq:min_representer_H}, where a minimizer can be obtained by substituting $H$ of the form \eqref{eq:representer_H} and solving for the coefficient vectors $c_i\in\bbR^n$, $i\in[N]$. To obtain the resulting finite-dimensional problem, note that, if $H$ is of the form \eqref{eq:representer_H}, then
	\begin{equation}\label{eq:representer_H_eval}
		\partial H(x_i) = \sum_{j=1}^N \H_{ij}c_j = \H_i c,
	\end{equation}
	for all $i\in[N]$, where $\H_{ij}\in\bbR^{n\times n}$ is given by
	\begin{equation}\label{eq:Hji}
		\H_{ij} = \bbm \partialf_1\partials_1\h(x_i,x_j) & \cdots & \partialf_1\partialf_n\h(x_i,x_j)\\ \vdots & \ddots & \vdots\\  \partialf_n\partials_1\h(x_i,x_j) & \cdots & \partialf_n\partialf_n\h(x_i,x_j) \ebm
	\end{equation}
	for all $i,j\in[N]$, $c = \col(c_1,\cdots,c_n)$, and
	\begin{equation}\label{eq:gram_Hi}
		\H_i = \bbm \H_{i1} & \cdots & \H_{iN} \ebm
	\end{equation}
	for all $i\in[N]$. Let $\H = \col(\H_1,\dots,\H_N)$. We refer to $\H$ as the \emph{partial derivative Gram matrix} associated with $\h$ and $x_i\in\bbR^n$, $i\in[N]$. This is because $\H$ can be seen as the Gram matrix associated with a reproducing kernel $\hp$ obtained from the partial derivatives of $\h$, as shown below.
	\begin{proposition}\label{prop:partial_Gram}
		The map $\hp: \bbR^n\times\bbR^n\to\bbR^{n\times n}$ given by
		\begin{equation}
			\hp(x,y) = \bbm \partialf_1\partials_1\h(x,y) & \cdots & \partialf_1\partialf_n\h(x,y)\\ \vdots & \ddots & \vdots\\  \partialf_n\partials_1\h(x,y) & \cdots & \partialf_n\partialf_n\h(x,y) \ebm,
		\end{equation}
		is symmetric and positive semidefinite.
	\end{proposition}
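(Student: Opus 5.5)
Since $\h$ is twice continuously differentiable, Proposition~\ref{prop:partial_reproducing} gives $\partials_j\h(\cdot,y)\in\calH$ together with the partial derivative reproducing property. The plan is to identify every entry of $\hp(x,y)$ with an inner product of two partial derivative kernel sections in $\calH$. Symmetry and positive semidefiniteness of $\hp$ then follow from the corresponding properties of the Gram matrix of a finite collection of vectors in a Hilbert space. Put differently, we exhibit a feature map for $\hp$ and appeal to (the easy direction of) Proposition~\ref{prop:feature_map}.

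\emph{Step 1 (entries as inner products).} Fix $y$ and $l\in[n]$, and apply item 2 of Proposition~\ref{prop:partial_reproducing} to $H=\partials_l\h(\cdot,y)\in\calH$ at the point $x$ with index $k$. This gives
\begin{equation*}
\partialf_k\partials_l\h(x,y)=\partial_k\big(\partials_l\h(\cdot,y)\big)(x)=\inner{\partials_l\h(\cdot,y)}{\partials_k\h(\cdot,x)}_\calH .
\end{equation*}
We read the entries of $\hp$ as the mixed derivatives $\partialf_k\partials_l\h$; the displayed matrix contains a few typos in its superscripts. Once both operators are written as derivatives of the kernel, the identification is exactly this one.

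\emph{Step 2 (feature map).} Define $\psi(x)\in\B(\bbR^n,\calH)$ by $\psi(x)\xi=\sum_{k=1}^n\xi_k\,\partials_k\h(\cdot,x)$. This is a finite combination of elements of $\calH$, hence bounded. By Step~1,
\begin{equation*}
\inner{\xi}{\psi(x)\a\psi(y)\eta}_{\bbR^n}=\sum_{k,l}\xi_k\eta_l\inner{\partials_k\h(\cdot,x)}{\partials_l\h(\cdot,y)}_\calH=\xi\t\hp(x,y)\eta,
\end{equation*}
so $\hp(x,y)=\psi(x)\a\psi(y)$.

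\emph{Step 3 (conclusion).} Symmetry follows from $\hp(x,y)\t=(\psi(x)\a\psi(y))\a=\psi(y)\a\psi(x)=\hp(y,x)$. Positive semidefiniteness follows from
\begin{equation*}
\sum_{i,j}\inner{v_i}{\hp(x_i,x_j)v_j}=\Big\|\sum_i\psi(x_i)v_i\Big\|_\calH^2\geq0 .
\end{equation*}
Alternatively, Proposition~\ref{prop:feature_map} can be cited directly.

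\emph{Main obstacle.} The only delicate point is Step~1: we must make sure the derivative falls on the correct argument, with $\partialf$ acting on the evaluation point and $\partials$ on the section parameter, and we rely on the symmetry $\h(x,y)=\h(y,x)$. All the analytic difficulty, namely membership of the derivative sections in $\calH$, is already handled by Proposition~\ref{prop:partial_reproducing}.
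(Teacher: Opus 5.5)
Your proof is correct and takes the same approach as the paper. The paper uses the same feature map $\phi(x)v=\sum_i \partials_i\h(\cdot,x)v_i$, gets $\phi(x)\a H=\partial H(x)$ from the partial derivative reproducing property, and deduces $\hp(x,y)=\phi(x)\a\phi(y)$, implicitly reading the entries as $\partialf_k\partials_l\h(x,y)$ as you do; it then cites Proposition~\ref{prop:feature_map}, whose easy direction you verify directly (the symmetry $\h(x,y)=\h(y,x)$ you mention is not actually needed in Step~1).
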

	\begin{IEEEproof}
		In view of Proposition~\ref{prop:feature_map}, it is enough to show that there exists a feature map $\phi:\bbR^n\to\B(\bbR^n,\calH)$ such that $\hp(x,y) = \phi(x)\a \phi(y)$. To this end, let $\phi$ be such that
		\begin{equation}
			\phi(x)v = \partials\h(\cdot,x)\t v = \sum_{i=1}^{n} \partials_i\h(\cdot,x)v_i,
		\end{equation}
		for all $x,v\in\bbR^n$, where $v_i\in\bbR$ is the $i$-th entry of $v$. Note that $\phi(x)v\in\calH$ due to the first item in Proposition~\ref{prop:partial_reproducing}. Due to the second item, we have that
		\begin{equation}
			\inner{H}{\phi(x)v}_\calH = \sum_{i=1}^{n} \partial_iH(x)v_i =\inner{ \partial H(x)}{v}_{\bbR^n}
		\end{equation}
		for all $H\in\calH$ and $x,v\in\bbR^n$, hence $\phi(x)\a H = \partial H(x)$ for all $x\in\bbR^n$ and $H\in\calH$. This implies that
		\begin{equation}
			\phi(x)\a \phi(y)v = \sum_{i=1}^{n} \partialf\partials_i\h(x,y)v_i = \hp(x,y)v
		\end{equation}
		for all $x,y,v\in\bbR^n$, and, thus, $\hp(x,y) = \phi(x)\a \phi(y)$.
	\end{IEEEproof}

Proposition~\ref{prop:partial_Gram} shows that $\hp$ is a reproducing kernel. Note that $\H_{ij} = \hp(x_i,x_j)$ for all $i,j\in[N]$, hence $\H$ is the Gram matrix associated with $\hp$ and $x_i\in\bbR^n$, $i\in[n]$, This implies that $\H$ is positive semidefinite. Furthermore, we have the following result regarding the norm of $H$.
\begin{lemma}\label{lem:representer_H_norm}
	If $H\in\calH$ is of the form \eqref{eq:representer_H}, then 
	\begin{equation}\label{eq:representer_H_norm}
		\norm{H}_\calH^2 = c\t \H c.
	\end{equation}
\end{lemma}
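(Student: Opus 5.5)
We compute the norm through the feature map $\phi:\bbR^n\to\B(\bbR^n,\calH)$ from the proof of Proposition~\ref{prop:partial_Gram}, given by $\phi(x)v = \partials\h(\cdot,x)\t v$. In that notation, an $H$ of the form \eqref{eq:representer_H} is simply
\begin{equation*}
	H = \sum_{j=1}^N \phi(x_j)c_j .
\end{equation*}
Expanding the norm by bilinearity of the inner product gives
\begin{equation*}
	\norm{H}_\calH^2 = \sum_{i,j=1}^N \inner{\phi(x_i)c_i}{\phi(x_j)c_j}_\calH = \sum_{i,j=1}^N c_i\t \phi(x_i)\a\phi(x_j)c_j .
\end{equation*}
This uses only that $\phi(x_i)\a$ is the adjoint of $\phi(x_i)$.

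The central step is the identity $\phi(x_i)\a\phi(x_j) = \hp(x_i,x_j) = \H_{ij}$. It was established in the proof of Proposition~\ref{prop:partial_Gram}, using the partial derivative reproducing property of Proposition~\ref{prop:partial_reproducing}. Concretely, $\phi(x)\a G = \partial G(x)$ for every $G\in\calH$. Applying this to $G = \phi(x_j)c_j = \partials\h(\cdot,x_j)\t c_j$ yields $\partialf\partials\h(x_i,x_j)c_j$, which is $\hp(x_i,x_j)c_j$. Equivalently, one can apply Proposition~\ref{prop:partial_reproducing} directly to each pair of partial derivative kernel sections:
\begin{equation*}
	\inner{\partials_k\h(\cdot,x_i)}{\partials_l\h(\cdot,x_j)}_\calH = \partialf_l\partials_k\h(x_j,x_i),
\end{equation*}
which is the mixed second derivative entry of $\hp$.

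Substituting gives $\norm{H}_\calH^2 = \sum_{i,j} c_i\t \H_{ij} c_j$, and this equals $c\t\H c$ with $c=\col(c_1,\dots,c_N)$ and $\H$ the block matrix with blocks $\H_{ij}$. A shortcut is to combine \eqref{eq:representer_H_eval} with the reproducing property: $\norm{H}^2 = \inner{H}{\sum_i \phi(x_i)c_i} = \sum_i c_i\t \partial H(x_i) = \sum_i c_i\t \H_i c = c\t \H c$.

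The main obstacle is index and ordering bookkeeping. One must check that the entry obtained, $\partialf_k\partials_l\h(x_i,x_j)$, matches the $(k,l)$ entry of $\H_{ij}$ as defined in \eqref{eq:Hji}, and not its transpose. The symmetry $\h(x,y)=\h(y,x)$ together with the symmetry of $\hp$ from Proposition~\ref{prop:partial_Gram} settles this. Apart from that, the argument only needs the standing assumption that $\h$ is twice continuously differentiable, so that Proposition~\ref{prop:partial_reproducing} applies.
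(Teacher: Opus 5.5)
Your proposal is correct and follows the paper's proof essentially verbatim. Both write $H=\sum_{i}\phi(x_i)c_i$ using the feature map from Proposition~\ref{prop:partial_Gram}, expand $\inner{H}{H}_\calH$ bilinearly, and invoke $\phi(x_i)\a\phi(x_j)=\hp(x_i,x_j)=\H_{ij}$ to get $\sum_{i,j}c_i\t\H_{ij}c_j=c\t\H c$. Your shortcut through \eqref{eq:representer_H_eval} is a valid repackaging of the same identity, and your index-order concern is harmless, since $\phi(x_i)\a\phi(x_j)c_j=\partial(\phi(x_j)c_j)(x_i)=\hp(x_i,x_j)c_j$ already has the correct orientation without invoking symmetry.
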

\begin{IEEEproof}
	Let the feature map $\phi:\bbR^n\to\B(\bbR^n,\calH)$ be defined as in the proof of Proposition~\ref{prop:partial_Gram}. Note that we can write $H = \sum_{i=1}^{N} \phi(x_i)c_i$, which implies that
	\begin{align*}
		\inner{H}{H}_\calH &= \sum_{i,j=1}^{n} \inner{\phi(x_i)c_i}{\phi(x_j)c_j}_\calH = \sum_{i,j=1}^{n} c_i\t \H_{ij}c_j,
	\end{align*}
	because $\phi(x_i)\a\phi(x_j) = \hp(x_i,x_j) = \H_{ij}$ for all $i,j\in[N]$. It is easily seen that the latter is equivalent to \eqref{eq:representer_H_norm}.
\end{IEEEproof}

Finally, using \eqref{eq:representer_H_eval} and \eqref{eq:representer_H_norm}, we conclude that a minimizer of \eqref{eq:min_representer_H} can be obtained by solving
\begin{equation}
	\min_{c\in\bbR^{nN}} L(\H_1c, \dots, \H_Nc) + \beta c\t \H c,
\end{equation}
and substituting the resulting minimizer in \eqref{eq:representer_H}.

%\old{, we obtain the following corollary of Proposition~\ref{prop:representer_H}. 
%\begin{corollary}\label{cor:representer_H}
%	Problem \eqref{eq:min_representer_H} has a minimizer of the form \eqref{eq:representer_H}, where $c = \col(c_1,\dots,c_N)$ is a minimizer of
%	\begin{equation}
%		\min_{c\in\bbR^{nN}} L(\H_1c, \dots, \H_Nc) + \beta c\t \H c,
%	\end{equation}
%	and $\H = \col(\H_1,\dots,\H_N)$ is the partial derivative Gram matrix associated with $\h$ and $x_i\in\bbR^n$, $i\in[N]$.
%\end{corollary}}
%\old{\begin{remark}\label{rem:universal_diff}
%	The notion of universality, see Remark~\ref{rem:universal}, can be extended to include the (partial) derivatives of a map. This is known as \emph{differential universality} \cite{guella2022}. A kernel is differentially universal if any continuously differentiable function and its derivative can be uniformly approximated to arbitrary precision by a map from the reproducing kernel Hilbert space and its derivative, respectively. Conditions for differential universality can be found in \cite{guella2022}. They are generally difficult to verify, except in some special cases, such as the one where the kernel is radial \cite[Theorem~4.12]{guella2022}. In particular, it is known that the Gaussian kernel is differentially universal.
%\end{remark}}

\subsection{Pointwise nonnegative maps}\label{app:nonnegative}

In this subsection, we leverage the theory of RKHS's to model pointwise nonnegative maps. We also prove a representer theorem for a class of optimization problems of the form \eqref{eq:min_representer_G} where the decision variable is constrained to be a pointwise nonnegative map from this model class. The results in this section are an extension of the results in \cite{marteau-ferey2020}, where only scalar-valued maps are considered, and \cite{muzellec2022}, where only pointwise nonnegative \emph{and symmetric} maps are considered. We also refer to our previous work \cite{shali2024}, where we consider nonnegative input-output operators.

Consider the map $F:\bbR^{n}\to\bbR^{p\times p}$. It is easily seen that there exist maps $l_i, r_i:\bbR^n\to\bbR^p$, $i\in[p]$, such that
\begin{equation}
	F = \sum_{i=1}^{p} l_i(\cdot)r_i(\cdot)\t.
\end{equation}
Indeed, one can take $l_i(x)\in\bbR^p$ to be the $i$-th column of $F(x)$, and $r_i(x)\in\bbR^p$ to be the $i$-th standard basis vector. We can model $F$ via $l_i, r_i$, $i\in[p]$, which can be modeled as maps in an RKHS. Let $\calF$ be an RKHS with kernel $\f:\bbR^{n}\times\bbR^{n}\to\bbR^{p\times p}$ and feature map $\phi:\bbR^{n}\to\B(\bbR^{p},\calW)$, where $\calW$ is a Hilbert space. Recall that $\calF$ consists of maps of the form $\phi(\cdot)\a w$, where $w\in\calW$. Consequently, we can write $l_i = \phi(\cdot)\a u_i$ and $r_i = \phi(\cdot)\a v_i$ for some $v_i,u_i\in \calW$, $i\in[p]$, which results in
\begin{equation}
	F = \phi(\cdot)\a \left( \sum_{i=1}^{p} u_iv_i\a \right)\phi(\cdot).
\end{equation}
The latter can be compactly written as
\begin{equation}\label{eq:F}
	F = \phi(\cdot)\a Q \phi(\cdot),
\end{equation}
where $Q = \sum_{i=1}^{p} u_iv_i\a\in\B(\calW)$ has finite rank and is, thus, a Hilbert-Schmidt operator, i.e., $Q\in\HS(\calW)$. Note that $F$ is pointwise nonnegative if $Q$ is nonnegative since
\begin{equation}
	\inner{F(x)v}{v}_{\bbR^p} = \inner{Q\phi(x)v}{\phi(x)v}_\calW \geq 0
\end{equation}
for all $x\in\bbR^n$ and $v\in\bbR^p$.  Therefore, we can model a pointwise nonnegative map $F$ as in \eqref{eq:F} with $Q\in\HS(\calW)_+$.

This model class is highly expressive. In particular, if the feature map $\phi$ is universal, then any continuous pointwise nonnegative map $F$ can be approximated arbitrarily well on compact subsets of $\bbR^n$. This has already been established for pointwise nonnegative scalar-valued maps in \cite{marteau-ferey2020} and pointwise symmetric and nonnegative matrix-valued maps in \cite{muzellec2022}. Extending these results to the case at hand, i.e., without pointwise symmetry, is fairly straightforward but outside of the scope of this paper. Instead, we prove yet another representer theorem using the following technical lemma.
\begin{lemma}\label{lem:representer_F}
	Let $x_i\in\bbR^{n}$, $i\in[N]$. For all $Q\in\HS(\calW)$, there exist $M_{ij}\in\bbR^{p\times p}$, $i,j\in[N]$, such that the operator
	\begin{equation}\label{eq:representer_Q}
		 \bar Q = \sum_{i,j=1}^{N} \phi(x_i)M_{ij} \phi(x_j)\a
	\end{equation}
	satisfies $\phi(x_i)\a \bar Q\phi(x_i) = \phi(x_i)\a Q\phi(x_i)$ for all $i\in[N]$, and $\norm{\bar Q}_{\HS} = \norm{Q}_{\HS}$. Furthermore, if $Q$ is nonnegative, then $\bar Q$ is nonnegative and the block matrix
	\begin{equation}\label{eq:representer_F_M}
		M = \bbm M_{11}  & \cdots & M_{1N}\\ \vdots & \ddots & \vdots \\ M_{N1} & \cdots & M_{NN} \ebm
	\end{equation}
	is nonnegative.
\end{lemma}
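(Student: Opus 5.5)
The plan is to take $\bar Q$ to be the orthogonal compression of $Q$ onto the finite-dimensional subspace spanned by the data features, and then correct its Hilbert--Schmidt norm by a perturbation that leaves both the data constraints and nonnegativity unchanged. Let
\[
\calV=\spn\set{\phi(x_i)v}{i\in[N],\ v\in\bbR^p}\subset\calW ,
\]
and let $P\in\B(\calW)$ be the orthogonal projection onto $\calV$. Set $\bar Q_0=PQP$.

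\textbf{Step 1 (data constraints).} Since $P\phi(x_i)=\phi(x_i)$ for every $i$, we get
\[
\phi(x_i)\a\bar Q_0\phi(x_i)=\phi(x_i)\a Q\phi(x_i)
\]
for all $i\in[N]$. The same identity holds for the off-diagonal blocks $\phi(x_i)\a\bar Q_0\phi(x_j)$.

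\textbf{Step 2 (representation).} The range of $\bar Q_0$ lies in $\calV$, and so does the range of $\bar Q_0\a$. Writing $P=\Phi(\Phi\a\Phi)^{\dagger}\Phi\a$ with $\Phi=[\phi(x_1)\ \cdots\ \phi(x_N)]$ and $\Phi\a\Phi=\F$ shows that $\bar Q_0=\Phi M_0\Phi\a$ with
\[
M_0=\F^{\dagger}\,\Phi\a Q\Phi\,\F^{\dagger}.
\]
This is exactly the form \eqref{eq:representer_Q}.

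\textbf{Step 3 (nonnegativity).} If $Q$ is nonnegative, then
\[
\inner{PQPw}{w}=\inner{QPw}{Pw}\ge0 ,
\]
so $\bar Q_0$ is nonnegative. Moreover, $\xi\t M_0\xi=\inner{Qw}{w}\ge0$ with $w=\Phi\F^{\dagger}\xi$, so $M_0$ is nonnegative.

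\textbf{Step 4 (Hilbert--Schmidt norm).} Completing an orthonormal basis of $\calV$ to one of $\calW$ gives $\norm{PQP}_{\HS}\le\norm{Q}_{\HS}$. To obtain the stated equality, I would add a correction $K=\Phi S\Phi\a$ satisfying three conditions:
\begin{enumerate}
\item $S$ is skew-symmetric, so that $K$ is skew-adjoint and $\inner{Kw}{w}=0$; this preserves nonnegativity of both $\bar Q$ and $M$.
\item The diagonal blocks vanish, $\F_i S\F_i\t=0$ for all $i$, so that the data constraints of Step 1 still hold.
\item The norm matches, $\norm{\bar Q_0+K}_{\HS}^2=\norm{Q}_{\HS}^2$.
\end{enumerate}
Because $K$ is skew-adjoint while $\bar Q_0$ need not be symmetric, the cross term is $2\tr(\bar Q_0\a K)$. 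I would choose $S$ along a direction $S_1$ in the subspace cut out by conditions 1 and 2 and scale it as $S=tS_1$. The squared norm is then a quadratic in $t$ that starts at $\norm{\bar Q_0}_{\HS}^2$ and grows without bound, so by continuity some $t$ hits $\norm{Q}_{\HS}^2$ exactly. The final coefficients are $M=M_0+tS_1$.

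\textbf{Main obstacle.} The hard step is Step 4, specifically the existence of a nonzero direction $S_1$ satisfying conditions 1 and 2. I would try first to build $S_1$ from off-diagonal blocks pairing two distinct data points $i\neq j$ with $\F_{ij}\neq0$. When the subspace defined by conditions 1 and 2 is trivial, for example when $N=1$ and $p=1$, no such $S_1$ exists. In that case the compression only yields $\norm{\bar Q}_{\HS}\le\norm{Q}_{\HS}$, and this inequality is all that Theorem~\ref{thm:representer_main} actually uses. The equality therefore depends on this degenerate case being excluded.
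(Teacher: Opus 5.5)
Your Steps 1--3, together with the bound $\norm{PQP}_{\HS}\le\norm{Q}_{\HS}$ at the start of Step 4, are the paper's proof. The paper takes $\bar Q=\Pi Q\Pi$, with $\Pi$ the orthogonal projection onto $\sum_i\im\phi(x_i)$, and bounds $\norm{\Pi Q\Pi}_{\HS}\le\norm{\Pi}^2\norm{Q}_{\HS}=\norm{Q}_{\HS}$. It then cites \cite[Lemma~3]{shali2024} for the block representation and the nonnegativity of $M$; your explicit formula $M_0=\F^{\dagger}\Phi^{*}Q\Phi\F^{\dagger}$ does this in a self-contained way. Note that the paper's proof also establishes only the inequality, not the equality written in the statement.

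The gap is the perturbation in Step 4, and it cannot be closed: the claimed equality is false in general, not just out of reach of your method. Take $N=p=1$, $w_1=\phi(x_1)1\neq0$, $\dim\calW\ge2$, and $Q=w_1w_1^{*}/\norm{w_1}^2+uu^{*}$ with $u\perp w_1$ a unit vector. Every admissible operator has the form $\bar Q=m\,w_1w_1^{*}$. The data constraint $m\norm{w_1}^4=\inner{Qw_1}{w_1}=\norm{w_1}^2$ then forces $\bar Q=w_1w_1^{*}/\norm{w_1}^2$. Hence $\norm{\bar Q}_{\HS}=1<\sqrt{2}=\norm{Q}_{\HS}$, even though $Q$ is nonnegative and self-adjoint. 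More generally, for $N=1$ and injective $\phi(x_1)$ the constraint determines $\bar Q=PQP$ uniquely, so equality holds only if $Q=PQP$.

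Even in the nondegenerate cases, you give no argument that the required direction exists. You would also need $\Phi S_1\Phi^{*}\neq0$, not merely $S_1\neq0$, since $\Phi$ can have a kernel.

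You were right that only the inequality is used downstream. Theorem~\ref{thm:representer_main} and Proposition~\ref{prop:representer_F} need only $c(\bar Q)\le c(Q)$. The ``$=$'' in the lemma should read ``$\le$''. Drop the perturbation: with that correction, your Steps 1--3 and the compression bound form a complete proof.
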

\begin{IEEEproof}
	Let $\Pi\in\B(\calW)$ be the orthogonal projection onto the finite-dimensional subspace $\bar\calW = \sum_{i=1}^{N} \im \phi(x_i)\subset\calW$, and define $\bar Q = \Pi Q \Pi$. Note that $\Pi = \Pi\a$ and $\Pi \phi(x_i) = \phi(x_i)$ for all $i\in[N]$, hence $\phi(x_i)\a \bar Q\phi(x_i) = \phi(x_i)\a \bar Q\phi(x_i)$ for all $i\in[N]$. Moreover, the composition of a Hilbert-Schmidt operator and a bounded operator is also a Hilbert-Schmidt operator \cite[p.267]{conway2007}, hence $\bar Q\in\HS(\calW)$ and
	\begin{equation}
		\norm{\bar Q}_\HS \leq \norm{\Pi}^2 \norm{Q}_{\HS} = \norm{Q}_{\HS},
	\end{equation}
	where $\norm{\Pi} = 1$ because $\Pi$ is an orthogonal projection. As shown in \cite[Lemma~3]{shali2024}, there exist $M_{ij}\in\bbR^{p\times p}$, $i,j\in[N]$ such that \eqref{eq:representer_Q} holds. Furthermore, $\bar Q$ and the block matrix in \eqref{eq:representer_F_M} are nonnegative if $Q$ is nonnegative. 
\end{IEEEproof}

We obtain the following representer theorem for optimization problems over pointwise nonnegative maps.
\begin{proposition}\label{prop:representer_F}
   	Let $x_i\in\bbR^{n}$, $i\in[N]$. Suppose that $\gamma > 0$ and $L: (\bbR^n)^N\to\bbR_+$ is continuous. Then, the problem
   	\begin{equation}\label{eq:min_representer_F}
   		\begin{aligned}
   			\min \quad& L(F(x_1), \dots, F(x_N)) + \gamma\norm{Q}_{\HS}^2,\\
   			\text{s.t.}\quad & Q\in\HS(\calW)_+ \text{ and } F = \phi(\cdot)\a Q \phi(\cdot)
   		\end{aligned}
   	\end{equation}
   	has a minimizer of the form \eqref{eq:representer_Q}, where the block matrix in \eqref{eq:representer_F_M} is nonnegative. Furthermore, the map
	\begin{align}
		\bar F &= \sum_{i,j=1}^{N} \f(\cdot, x_i) M_{ij} \f(\cdot, x_j )\t,\label{eq:representer_F}
	\end{align}
	is such that $\bar F = \phi(\cdot)\a \bar Q \phi(\cdot)$.
\end{proposition}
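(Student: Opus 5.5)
The argument copies the proof of Proposition~\ref{prop:representer_H}, with Lemma~\ref{lem:representer_F} taking the place of Lemma~\ref{lem:representer_H}.

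Let $\bar\calQ\subset\HS(\calW)_+$ be the set of operators of the form \eqref{eq:representer_Q} whose block matrix $M$ in \eqref{eq:representer_F_M} is nonnegative. Let the cost be
\[
c(Q)=L(F(x_1),\dots,F(x_N))+\gamma\norm{Q}_{\HS}^2, \qquad F=\phi(\cdot)\a Q\phi(\cdot).
\]

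First, $c$ is continuous in $Q$. This holds because $Q\mapsto \phi(x_i)\a Q\phi(x_i)$ is bounded linear, $\norm{\cdot}_\HS$ dominates the operator norm, and $L$ is continuous.

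Next, $\bar\calQ$ sits inside the finite-dimensional space of operators $\sum_{i,j}\phi(x_i)M_{ij}\phi(x_j)\a$. Parametrized by $M$, this space is the image of a linear map from $\bbR^{pN\times pN}$. The nonnegativity condition on $M$ defines a closed cone, so I take $\bar\calQ$ to be closed as the image of that cone. If closedness of this image is in doubt, I would instead minimize over the equivalent set $\set{Q\in\HS(\calW)_+}{Q=\Pi Q\Pi}$, where $\Pi$ is the projection onto $\sum_i\im\phi(x_i)$. That set is clearly closed and finite-dimensional, and by \cite[Lemma~3]{shali2024}, used in Lemma~\ref{lem:representer_F}, it coincides with $\bar\calQ$.

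Now fix an arbitrary $Q_0\in\bar\calQ$ (for example $Q_0=0$) and set
\[
\bar\calQ_0=\set{Q\in\bar\calQ}{\gamma\norm{Q}_\HS^2\le c(Q_0)}.
\]
This set is closed and bounded in a finite-dimensional space, hence compact. By the extreme value theorem, $c$ attains a minimum on $\bar\calQ_0$ at some $\bar Q_0$.

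To show that $\bar Q_0$ is a global minimizer, take any $Q\in\HS(\calW)_+$. Lemma~\ref{lem:representer_F} gives $\bar Q\in\bar\calQ$ with
\[
\phi(x_i)\a\bar Q\phi(x_i)=\phi(x_i)\a Q\phi(x_i)\quad\text{for all } i, \qquad \norm{\bar Q}_\HS\le\norm{Q}_\HS.
\]
Hence $c(\bar Q)\le c(Q)$. There are two cases.
\begin{itemize}
\item If $c(\bar Q)\le c(Q_0)$, then $\bar Q\in\bar\calQ_0$, so $c(\bar Q_0)\le c(\bar Q)$.
\item Otherwise $c(\bar Q_0)\le c(Q_0)<c(\bar Q)$.
\end{itemize}
In both cases $c(\bar Q_0)\le c(Q)$. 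So $\bar Q_0$ minimizes \eqref{eq:min_representer_F}, and it has the form \eqref{eq:representer_Q} with nonnegative $M$.

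Finally, since $\f(x,y)=\phi(x)\a\phi(y)$, we have $\phi(\cdot)\a\phi(x_i)=\f(\cdot,x_i)$ and $\phi(x_j)\a\phi(\cdot)=\f(x_j,\cdot)=\f(\cdot,x_j)\t$ by symmetry of the kernel. Substituting these into $\phi(\cdot)\a\bar Q\phi(\cdot)$ gives \eqref{eq:representer_F} directly.

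The only delicate point is the compactness (closedness) of $\bar\calQ_0$. I handle it through the projected description of $\bar\calQ$ as above.
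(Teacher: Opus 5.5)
Your proposal is correct and follows the paper's proof essentially step for step. It uses the same truncated set $\bar\calQ_0$, obtains a minimizer on it by compactness and continuity, and makes the same two-case comparison via Lemma~\ref{lem:representer_F}. Your one addition is to justify closedness of $\bar\calQ$ through the description $\{Q\in\HS(\calW)_+ : Q=\Pi Q\Pi\}$. This is a valid way to fill in a step the paper simply asserts, and the care is warranted, since your first justification does not work on its own: images of closed cones under linear maps need not be closed in general.
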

\begin{IEEEproof}
	Let $\bar \calQ$ be the subset of operators of the form \eqref{eq:representer_Q}, where the block matrix in \eqref{eq:representer_F_M} is nonnegative. The latter implies that the operators in $\bar\calQ$ are nonnegative. With this in mind, let the map $c:\HS(\calW)_+\to\bbR_+$ be given by
	\begin{equation}
		c(Q) = L(F(x_1), \dots, F(x_N)) + \gamma\norm{Q}_{\HS},
	\end{equation}
	where $F = \phi(\cdot)\a Q \phi(\cdot)$. Let $Q_0\in\bar \calQ$ be arbitrary and let $\bar\calQ_0$ be the closed and bounded subset of operators $Q\in\bar\calQ$ such that $\gamma \norm{Q}_{\HS} \leq c(Q_0)$. Note that $\bar\calQ_0$ is compact because $\bar\calQ$ is a subset of a finite-dimensional subspace. Let
	\begin{equation}
		\bar Q_0 = \argmin_{Q\in\bar\calQ_0} c(Q),
	\end{equation}
	which exists because $c$ is continuous and bounded from below, and $\bar\calQ_0$ is compact. We claim that $\bar Q_0$ is a minimizer of \eqref{eq:min_representer_F}. Due to Lemma~\ref{lem:representer_F}, for all $Q\in\HS(\calW)_+$, there exists $\bar Q \in\bar\calQ$ such that $c(\bar Q) \leq c(Q)$.  If $c(\bar Q) \leq c(Q_0)$, then, in particular, $\gamma\norm{\bar Q}\leq c(Q_0)$, hence $\bar Q\in\bar\calQ_0$, and, thus, $c(\bar Q_0) \leq c(\bar Q)$ by definition of $\bar Q_0$. On the other hand, if $c(Q_0) < c(\bar Q)$, then $c(\bar Q_0) < c(\bar Q)$ because $Q_0\in\bar\calQ_0$ and, thus, $c(\bar Q_0) \leq c(Q_0)$. It follows that $c(\bar Q_0) \leq c(Q)$ for all $Q\in\HS(\calW)_+$, hence $\bar Q_0$ is a minimizer of \eqref{eq:min_representer_F}. Since $\bar Q_0 \in \bar\calQ$, it is indeed of the form \eqref{eq:representer_H}, which concludes the proof. Finally, \eqref{eq:representer_F} follows from the fact that $\phi(\cdot)\a \phi(x_i) = \f(\cdot,x_i)$ for all $i\in[N]$.
\end{IEEEproof}

Like the previous representer theorems, the representer theorem stated in Proposition~\ref{prop:representer_F} provides a tractable solution to \eqref{eq:min_representer_F}, where a minimizer can be obtained by substituting $Q$ of the form \eqref{eq:representer_Q} and solving for coefficient matrices $M_{ij}\in\bbR^{p\times p}$, $i,j\in[N]$. The resulting finite-dimensional problem can be compactly expressed using the Gram matrix. In particular, if $F$ is of the form \eqref{eq:representer_F}, then
\begin{equation}\label{eq:representer_F_eval}
	F(x_i) = \F_i M \F_i\t
\end{equation}
for all $i\in[N]$, where $M$ is given by \eqref{eq:representer_F_M} and
\begin{equation}\label{eq:gram_Fi}
	\F_i = \bbm \f(x_i,x_1) & \cdots & \f(x_i,x_N) \ebm
\end{equation}
is the $i$-th block row of the Gram matrix $\F = \col(\F_1,\dots,\F_N)$ associated with $\f$ and $x_i\in\bbR^n$, $i\in[N]$. We also have the following result regarding the Hilbert-Schmidt norm of \eqref{eq:representer_Q}.
\begin{lemma}\label{lem:representer_F_norm}
	If $Q\in\HS(\calW)$ is of the form \eqref{eq:representer_F}, then
	\begin{equation}\label{eq:representer_F_norm}
		\norm{Q}^2_{\HS} = \tr(\F M\F M\t),
	\end{equation}
	where $M$ is given by \eqref{eq:representer_F_M}.
\end{lemma}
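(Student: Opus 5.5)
The plan is to expand the squared Hilbert--Schmidt norm as a trace and reduce it to Gram-matrix blocks. Although the statement says ``of the form \eqref{eq:representer_F}'', the relevant object is the operator $Q = \sum_{i,j=1}^{N}\phi(x_i)M_{ij}\phi(x_j)\a$ of the form \eqref{eq:representer_Q}. Write $\Phi = \bbm \phi(x_1) & \cdots & \phi(x_N)\ebm \in \B(\bbR^{pN},\calW)$, so that $Q = \Phi M \Phi\a$. The adjoint is $\Phi\a w = \col(\phi(x_1)\a w,\dots,\phi(x_N)\a w)$. Then $\Phi\a\Phi$ is the block matrix with $(i,j)$ block $\phi(x_i)\a\phi(x_j) = \f(x_i,x_j)$, that is, $\Phi\a\Phi = \F$.

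Since $Q$ has finite rank, it is Hilbert--Schmidt. I will use $\norm{Q}_{\HS}^2 = \tr(Q\a Q)$, where the trace is taken in $\calW$ and is finite because $Q\a Q$ has finite rank. This identity follows directly from the definition via an orthonormal basis: $\sum_k \norm{Qw_k}^2 = \sum_k \inner{Q\a Q w_k}{w_k}$. Then
\begin{equation*}
	Q\a Q = \Phi M\t \Phi\a \Phi M \Phi\a = \Phi M\t \F M \Phi\a.
\end{equation*}

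The key step is the cyclic property of the trace for finite-rank compositions: for $A\in\B(\bbR^{pN},\calW)$ and $B\in\B(\calW,\bbR^{pN})$, we have $\tr_\calW(AB) = \tr_{\bbR^{pN}}(BA)$. To justify it, choose an orthonormal basis of $\calW$ whose first elements span the finite-dimensional space $\bar\calW = \sum_i \im\phi(x_i)$, which contains $\im\Phi$. Only these basis vectors contribute to the trace, so the identity reduces to the matrix case after picking a basis of $\bar\calW$. Applying it with $A = \Phi$ and $B = M\t\F M\Phi\a$ gives
\begin{equation*}
	\norm{Q}_{\HS}^2 = \tr(M\t \F M \Phi\a\Phi) = \tr(M\t\F M\F) = \tr(\F M \F M\t),
\end{equation*}
where the last equality again uses the cyclic property, now for matrices.

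The only step needing some care is this trace manipulation on the infinite-dimensional space $\calW$, which is handled by restricting to $\bar\calW$ as described. An alternative is a direct computation. Write $Q = \sum_{i,j}\sum_{a,b}(M_{ij})_{ab}\,\phi(x_i)e_a\,(\phi(x_j)e_b)\a$ and use $\tr(uv\a) = \inner{u}{v}$ together with $\inner{\phi(x_i)e_a}{\phi(x_k)e_c} = (\f(x_i,x_k))_{ac}$ to expand $\tr(Q\a Q)$ into the same sum of products of entries of $\F$ and $M$.
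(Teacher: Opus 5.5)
Your proof is correct, but it handles the infinite-dimensional step differently from the paper. The paper never takes a trace on $\calW$. It fixes an orthonormal basis $w_1,\dots,w_k$ of $\im\Phi$ and writes $w_i = \Phi c_i$. Using $\rank\Phi = \rank\F$, it shows that the vectors $v_i = \F^{\half}c_i$ form an orthonormal basis of $\im\F^{\half}$, so that $\norm{Qw_i}_{\calW} = \norm{\F^{\half}M\F^{\half}v_i}$. Summing yields $\norm{Q}_{\HS} = \norm{\F^{\half}M\F^{\half}}_{\HS}$, a Frobenius norm, and the cyclic property is used only for matrices at the very end. In effect, $v_i\mapsto w_i$ is the partial isometry in the polar decomposition $\Phi = U\F^{\half}$. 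The paper thereby identifies $Q$ with $\F^{\half}M\F^{\half}$ up to this partial isometry, a slightly stronger structural fact that also gives equality of the nonzero singular values. Your route replaces this basis bookkeeping with the single identity $\tr_{\calW}(\Phi B) = \tr(B\Phi)$. You justify it correctly by noting that $\Phi\a w = 0$ for $w\perp\bar\calW$, so only finitely many basis vectors contribute. It is shorter, needs neither $\F^{\half}$ nor the rank argument, and polarizes at once to $\inner{Q}{Q'}_{\HS} = \tr(\F M'\F M\t)$ for two operators of this form. You are also right that the statement's reference to \eqref{eq:representer_F} should be to \eqref{eq:representer_Q}.
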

\begin{IEEEproof}
	Let the operator $\Phi\in\B(\bbR^{pN}, \calW)$ be given by
	\begin{equation}
		\Phi(z) = \sum_{i=1}^{N} \phi(x_i)z_i
	\end{equation}
	where $z_i\in\bbR^p$, $i\in[N]$, and $z = \col(z_1,\dots,z_N)$. We can write $Q = \Phi M \Phi^\ast$, which is easily verified using the block operator notation
	\begin{equation}
		\Phi = \bbm \phi(x_1) & \cdots & \phi(x_N) \ebm.
	\end{equation}
	Let $w_i$, $i\in[k]$, be an orthonormal basis for the finite-dimensional subspace $\im \Phi\subset \calW$, and extend it to an orthonormal basis $w_i$, $i\in\bbN$, for the whole space $\calW$. Note that $\inner{\Phi c}{w_i} = 0$ for all $c\in\bbR^{pN}$ and $i > k$, hence $\Phi\a w_i = 0$ for all $i > k$. Consequently, we obtain
	\begin{equation}
 \norm{Q}_{\HS}^2= \sum_{i=1}^{\infty} \norm{Qw_i}_\calW^2 = \sum_{i=1}^{k} \norm{\Phi M \Phi\a w_i}^2_\calW
	\end{equation}
	Now, for all $i\in[k]$, there exists $c_i\in\bbR^{pN}$ such that $w_i = \Phi c_i$. Since $\inner{w_i}{w_j}_\calW = 0$ for all $i\neq j$, it follows that
	\begin{equation}
		\inner{\Phi c_i}{\Phi c_j} = \inner{c_i}{\F c_j} = \sinner{\F^\half c_i}{\F^\half c_j} = 0
	\end{equation}
	where we used the fact that $\Phi\a \Phi = \F$ is symmetric positive semidefinite, hence there exists a symmetric positive semidefinite $\F^\half\in\bbR^{pN\times pN}$ such that $\F^\half \F^\half = \F$. Similarly, since $\norm{w_i}^2_\calW = 1$ for all $i\in\bbN$, it follows that $\snorm{\F^\half c_i}_{\bbR^{pN}} = 1$ for all $i\in[k]$. In other words, the vectors $v_i = \F^\half c_i\in\bbR^{pN}$, $i\in[k]$, are orthonormal. Note that $k$ is the rank of $\Phi$, which equals the rank of $\F$ and, thus $\F^\half$. This implies that $v_i$, $i\in[k]$ form an orthonormal basis for the subspace $\im \F^\half\subset\bbR^{pN}$, which can be extended to an orthonormal basis $v_i\in\bbR^{pN}$, $i\in[nP]$, for the whole space $\bbR^{pN}$. Since $\F^\half$ is symmetric, this basis is such that $\F^\half v_i = 0$ for all $i >k$. 
	
	With this in mind, since $\Phi\a \Phi = \F = \F^\half \F^\half$, and $w_i = \Phi c_i$ and $v_i = \F^\half c_i$ for all $i\in[k]$, it follows that
	\begin{equation*}
		\norm{\Phi M \Phi\a w_i}^2_\calW = \inner{\Phi M \F c_i}{\Phi M \F c_i}_\calW = \snorm{\F^\half M \F^\half v_i}_{\bbR^{pN}}^2
	\end{equation*}
	for all $i\in[k]$. This implies that
	\begin{equation}
		\norm{Q}_{\HS} =  \sum_{i=1}^{k} \norm{\Phi M \Phi\a w_i}^2_\calW = \sum_{i=1}^{k} \snorm{\F^\half M \F^\half v_i}_{\bbR^{pN}}^2,
	\end{equation}
	Since $\F^\half v_i = 0$ for all $i > k$, we can write
	\begin{equation}
		\norm{Q}_{\HS}  = \sum_{i=1}^{nP} \snorm{\F^\half M \F^\half v_i}_{\bbR^{pN}}^2 = \snorm{\F^\half M \F^\half}^2_{\HS},
	\end{equation}
	Recall that the Hilbert-Schmidt norm of a matrix is simply the Frobenius norm, hence
	\begin{equation*}
		\snorm{\F^\half M \F^\half}^2_{\HS} = \tr(\F^\half M \F M\t \F^\half) = \tr(\F M \F M\t),
	\end{equation*}
	where we used the fact that $\tr(AB) = \tr(BA)$ for all matrices $A$ and $B$ of appropriate sizes.
\end{IEEEproof}

\begin{remark}\label{rem:tr_convex}
	It can be shown that 
	\begin{equation}
		\tr(\F M \F M\t) = \vec(M)\t (\F\otimes\F) \vec(M),
	\end{equation}
	where $\otimes$ is the Kronecker product. Since $\F$ is positive semidefinite, it follows that $\F\otimes \F$ is positive semidefinite, which immediately shows that $\tr(\F M \F M\t)$ is convex in $M$. Moreover, if $\F$ is invertible, then $\tr(\F M \F M\t)$ is strongly convex with convexity parameter $\lambda_{\min}(\F\otimes\F) = \lambda_{\min}(\F)^2$.
\end{remark}

Finally, using \eqref{eq:representer_F_eval} and \eqref{eq:representer_F_norm}, we conclude that a minimizer of \eqref{eq:min_representer_F} can be obtained by solving
\begin{equation*}
	\begin{aligned}
		\min_{M\in\bbR^{pN\times pN}_+} \quad& L(\F_1 M \F_1\t, \dots, \F_N M \F_N\t) + \gamma\tr(\F M \F M\t),
	\end{aligned}
\end{equation*}
and substituting the resulting minimizer in \eqref{eq:representer_F}.

%
%\old{As a consequence of Lemma~\ref{lem:representer_F_norm} and Remark~\ref{rem:tr_convex}, we obtain the following corollary of Proposition~\ref{prop:representer_F}.
%\begin{corollary}\label{cor:representer_F}
%	Problem \eqref{eq:min_representer_F} has a minimizer of the form \eqref{eq:representer_F}, where the block matrix $M$ in \eqref{eq:representer_F_M} is a minimizer of
%	\begin{equation*}
%		\begin{aligned}
%			\min_{M\in\bbR^{pN\times pN}_+} \quad& l(\F_1 M \F_1\t, \dots, \F_N M \F_N\t) + \gamma\tr(\F M \F M\t),
%		\end{aligned}
%	\end{equation*}
%	and $\F=\col(\F_1,\dots,\F_N)$ is the Gram matrix associated with $\f$ and $x_i\in\bbR^n$, $i\in[N]$.
%\end{corollary}}

\bibliographystyle{ieeetr}
\bibliography{../../references/kernel_refs}

\begin{thebibliography}{10}

\bibitem{vanderchaft2014}
A.~van~der Schaft and D.~Jeltsema, ``Port-{Hamiltonian} systems theory: An
  introductory overview,'' {\em Foundations and Trends in Systems and Control},
  vol.~1, no.~2, pp.~173--378, 2014.

\bibitem{jeltsema2009}
D.~Jeltsema and J.~M. Scherpen, ``Multidomain modeling of nonlinear networks
  and systems,'' {\em IEEE Control Systems Magazine}, vol.~29, no.~4,
  pp.~28--59, 2009.

\bibitem{doerfler2009}
F.~Dörfler, J.~K. Johnsen, and F.~Allgöwer, ``An introduction to
  interconnection and damping assignment passivity-based control in process
  engineering,'' {\em Journal of Process Control}, vol.~19, no.~9,
  pp.~1413--1426, 2009.

\bibitem{vanderschaft2017}
A.~J. van~der Schaft, {\em {$L_2$}-Gain and Passivity Techniques in Nonlinear
  Control}.
\newblock Springer International Publishing, 2017.

\bibitem{cherifi2019}
K.~Cherifi, V.~Mehrmann, and K.~Hariche, ``Numerical methods to compute a
  minimal realization of a port-{Hamiltonian} system.'' arXiv:1903.07042, 2019.

\bibitem{benner2020}
P.~Benner, P.~Goyal, and P.~{Van Dooren}, ``Identification of
  port-{Hamiltonian} systems from frequency response data,'' {\em Systems \&
  Control Letters}, vol.~143, p.~104741, 2020.

\bibitem{schwerdtner2021}
P.~Schwerdtner, ``Port-{Hamiltonian} system identification from noisy frequency
  tesponse data.'' arXiv:2106.11355, 2021.

\bibitem{morandin2023}
R.~Morandin, J.~Nicodemus, and B.~Unger, ``Port-{Hamiltonian} dynamic mode
  decomposition,'' {\em SIAM Journal on Scientific Computing}, vol.~45, no.~4,
  pp.~A1690--A1710, 2023.

\bibitem{ortega2024a}
J.-P. Ortega and D.~Yin, ``Learnability of linear port-{Hamiltonian} systems,''
  {\em Journal of Machine Learning Research}, vol.~25, no.~68, pp.~1--56, 2024.

\bibitem{kutz2016}
J.~N. Kutz, S.~L. Brunton, B.~W. Brunton, and J.~L. Proctor, {\em Dynamic Mode
  Decomposition: Data-Driven Modeling of Complex Systems}.
\newblock SIAM, 2016.

\bibitem{pillonetto2010}
G.~Pillonetto and G.~De~Nicolao, ``A new kernel-based approach for linear
  system identification,'' {\em Automatica}, vol.~46, no.~1, pp.~81--93, 2010.

\bibitem{pillonetto2014}
G.~Pillonetto, F.~Dinuzzo, T.~Chen, G.~{De Nicolao}, and L.~Ljung, ``Kernel
  methods in system identification, machine learning and function estimation: A
  survey,'' {\em Automatica}, vol.~50, no.~3, pp.~657--682, 2014.

\bibitem{dinuzzo2015}
F.~Dinuzzo, ``Kernels for linear time invariant system identification,'' {\em
  SIAM Journal on Control and Optimization}, vol.~53, no.~5, pp.~3299--3317,
  2015.

\bibitem{ljung2020}
L.~Ljung, T.~Chen, and B.~Mu, ``A shift in paradigm for system
  identification,'' {\em International Journal of Control}, vol.~93, no.~2,
  pp.~173--180, 2020.

\bibitem{paulsen2016}
V.~I. Paulsen and M.~Raghupathi, {\em An Introduction to the Theory of
  Reproducing Kernel {Hilbert} Spaces}.
\newblock Cambridge University Press, 2016.

\bibitem{micchelli2006}
C.~A. Micchelli, Y.~Xu, and H.~Zhang, ``Universal kernels,'' {\em Journal of
  Machine Learning Research}, vol.~7, no.~95, pp.~2651--2667, 2006.

\bibitem{lazar2024}
M.~Lazar, ``A universal reproducing kernel {Hilbert} space for learning
  nonlinear systems operators.'' arXiv:2412.18360, 2024.

\bibitem{pillonetto2018}
G.~Pillonetto and A.~Chiuso, ``Identification of stable linear systems via the
  sequential stabilizing spline algorithm,'' {\em IFAC-PapersOnLine}, vol.~51,
  no.~15, pp.~19--24, 2018.

\bibitem{grussler2017}
C.~Grussler, J.~Umenberger, and I.~R. Manchester, ``Identification of
  externally positive systems,'' in {\em IEEE Conference on Decision and
  Control}, pp.~6549--6554, 2017.

\bibitem{khosravi2019}
M.~Khosravi and R.~S. Smith, ``Kernel-based identification of positive
  systems,'' in {\em Proceedings of the IEEE Conference on Decision and
  Control}, pp.~1740--1745, 2019.

\bibitem{vanwaarde2022}
H.~J. van Waarde and R.~Sepulchre, ``Kernel-based models for system analysis,''
  {\em IEEE Transactions on Automatic Control}, vol.~68, no.~9, pp.~5317--5332,
  2023.

\bibitem{huo2026}
Y.~Huo, T.~Chaffey, and R.~Sepulchre, ``Kernel modeling of fading memory
  systems,'' {\em IEEE Transactions on Automatic Control}, vol.~71, no.~3,
  pp.~2046--2052, 2026.

\bibitem{shali2024}
B.~M. Shali and H.~J. van Waarde, ``Towards a representer theorem for
  identification of passive systems,'' in {\em Proceedings of the IEEE
  Conference on Decision and Control}, pp.~8760--8765, 2024.

\bibitem{ebrahimkhani2025}
S.~Ebrahimkhani and J.~Lataire, ``Kernel-based estimation of frequency response
  function of strictly passive systems,'' {\em IEEE Control Systems Letters},
  pp.~1--1, 2025.

\bibitem{marteau-ferey2020}
U.~Marteau-Ferey, F.~Bach, and A.~Rudi, ``Non-parametric models for
  non-negative functions,'' in {\em Proceedings of the 34th International
  Conference on Neural Information Processing Systems}, 2020.

\bibitem{muzellec2022}
B.~Muzellec, F.~Bach, and A.~Rudi, ``Learning psd-valued functions using kernel
  sums-of-squares.'' arXiv:2111.11306, 2022.

\bibitem{gevers2005}
M.~Gevers, ``Identification for control: From the early achievements to the
  revival of experiment design*,'' {\em European Journal of Control}, vol.~11,
  no.~4–5, pp.~335--352, 2005.

\bibitem{hu2023}
Z.~Hu, C.~De~Persis, and P.~Tesi, ``Learning controllers from data via
  kernel-based interpolation,'' in {\em Proceedings of the IEEE Conference on
  Decision and Control}, pp.~8509--8514, 2023.

\bibitem{huang2024}
L.~Huang, J.~Lygeros, and F.~Dörfler, ``Robust and kernelized data-enabled
  predictive control for nonlinear systems,'' {\em IEEE Transactions on Control
  Systems Technology}, vol.~32, no.~2, pp.~611--624, 2024.

\bibitem{dejong2025}
T.~de~Jong, S.~Weiland, and M.~Lazar, ``A kernelized operator approach to
  nonlinear data-enabled predictive control.'' arXiv:2501.17500, 2025.

\bibitem{wang2025}
Z.~Wang and F.~Forni, ``Passive nonlinear {FIR} filters for data-driven
  control.'' arXiv:2508.05279, 2025.

\bibitem{karniadakis2021}
G.~E. Karniadakis, I.~G. Kevrekidis, L.~Lu, P.~Perdikaris, S.~Wang, and
  L.~Yang, ``Physics-informed machine learning,'' {\em Nature Reviews Physics},
  vol.~3, no.~6, pp.~422--440, 2021.

\bibitem{cherifi2020}
K.~Cherifi, ``An overview on recent machine learning techniques for
  port-{Hamiltonian} systems,'' {\em Physica D: Nonlinear Phenomena}, vol.~411,
  p.~132620, 2020.

\bibitem{bertalan2019}
T.~Bertalan, F.~Dietrich, I.~Mezić, and I.~G. Kevrekidis, ``On learning
  {Hamiltonian} systems from data,'' {\em Chaos: An Interdisciplinary Journal
  of Nonlinear Science}, vol.~29, no.~12, 2019.

\bibitem{smith2024}
T.~Smith and O.~Egeland, ``Learning {Hamiltonian} dynamics with reproducing
  kernel {Hilbert} spaces and random features,'' {\em European Journal of
  Control}, vol.~80, p.~101128, 2024.

\bibitem{greydanus2019}
S.~Greydanus, M.~Dzamba, and J.~Yosinski, ``Hamiltonian neural networks,'' in
  {\em Proceedings of the 33rd International Conference on Neural Information
  Processing Systems}, Curran Associates Inc., 2019.

\bibitem{zhong2020}
Y.~D. Zhong, B.~Dey, and A.~Chakraborty, ``Symplectic {ODE-Net}: {L}earning
  {Hamiltonian} dynamics with control,'' in {\em International Conference on
  Learning Representations}, 2020.

\bibitem{holmsen2024}
S.~Holmsen, S.~Eidnes, and S.~Riemer-Sørensen, ``Pseudo-{Hamiltonian} system
  identification,'' {\em Journal of Computational Dynamics}, vol.~11, no.~1,
  pp.~59--91, 2024.

\bibitem{desai2021}
S.~A. Desai, M.~Mattheakis, D.~Sondak, P.~Protopapas, and S.~J. Roberts,
  ``Port-{Hamiltonian} neural networks for learning explicit time-dependent
  dynamical systems,'' {\em Physical Review E}, vol.~104, no.~3, p.~034312,
  2021.

\bibitem{beckers2022}
T.~Beckers, J.~Seidman, P.~Perdikaris, and G.~J. Pappas, ``Gaussian process
  port-{Hamiltonian} systems: Bayesian learning with physics prior,'' in {\em
  Proceedings of the IEEE Conference on Decision and Control}, pp.~1447--1453,
  IEEE, 2022.

\bibitem{moradi2025}
S.~Moradi, G.~I. Beintema, N.~Jaensson, R.~Tóth, and M.~Schoukens,
  ``Port-{Hamiltonian} neural networks with output error noise models.''
  arXiv:2502.14432, 2025.

\bibitem{cherifi2025}
K.~Cherifi, A.~El~Messaoudi, H.~Gernandt, and M.~Roschkowski, ``Nonlinear
  port-{Hamiltonian} system identification from input-state-output data,''
  2025.

\bibitem{dinh2012}
Q.~Tran~Dinh, S.~Gumussoy, W.~Michiels, and M.~Diehl, ``Combining
  convex–concave decompositions and linearization approaches for solving
  bmis, with application to static output feedback,'' {\em IEEE Transactions on
  Automatic Control}, vol.~57, no.~6, pp.~1377--1390, 2012.

\bibitem{guella2022}
J.~C. Guella, ``Operator-valued positive definite kernels and differentiable
  universality,'' {\em Analysis and Applications}, vol.~20, no.~04,
  pp.~681--735, 2022.

\bibitem{meyer1976}
R.~Meyer, ``Sufficient conditions for the convergence of monotonic mathematical
  programming algorithms,'' {\em Journal of Computer and System Sciences},
  vol.~12, no.~1, pp.~108--121, 1976.

\bibitem{shapiro1997}
A.~Shapiro, ``First and second order analysis of nonlinear semidefinite
  programs,'' {\em Mathematical Programming}, vol.~77, no.~1, pp.~301--320,
  1997.

\bibitem{micchelli2005}
C.~A. Micchelli and M.~Pontil, ``{On Learning Vector-Valued Functions},'' {\em
  Neural Computation}, vol.~17, no.~1, pp.~177--204, 2005.

\bibitem{kalnishkan2009}
Y.~Kalnishkan, ``An introduction to kernel methods,'' {\em Technical Report,
  available at https://cml.rhul.ac.uk/publications/files/tr0901.pdf}, 2009.

\bibitem{aronszajn1950}
N.~Aronszajn, ``Theory of reproducing kernels,'' {\em Transactions of the
  American Mathematical Society}, vol.~68, no.~3, pp.~337--404, 1950.

\bibitem{vanwaarde2021arxiv}
H.~J. van Waarde and R.~Sepulchre, ``Kernel-based models for system analysis.''
  arXiv:2110.11735, 2021.

\bibitem{zhou2008}
D.-X. Zhou, ``Derivative reproducing properties for kernel methods in learning
  theory,'' {\em Journal of Computational and Applied Mathematics}, vol.~220,
  no.~1, pp.~456--463, 2008.

\bibitem{conway2007}
J.~B. Conway, {\em A Course in Functional Analysis}.
\newblock Springer New York, 2007.

\end{thebibliography}
\begin{IEEEbiography}[{\includegraphics[width=1in, height=1.25in, clip, keepaspectratio]{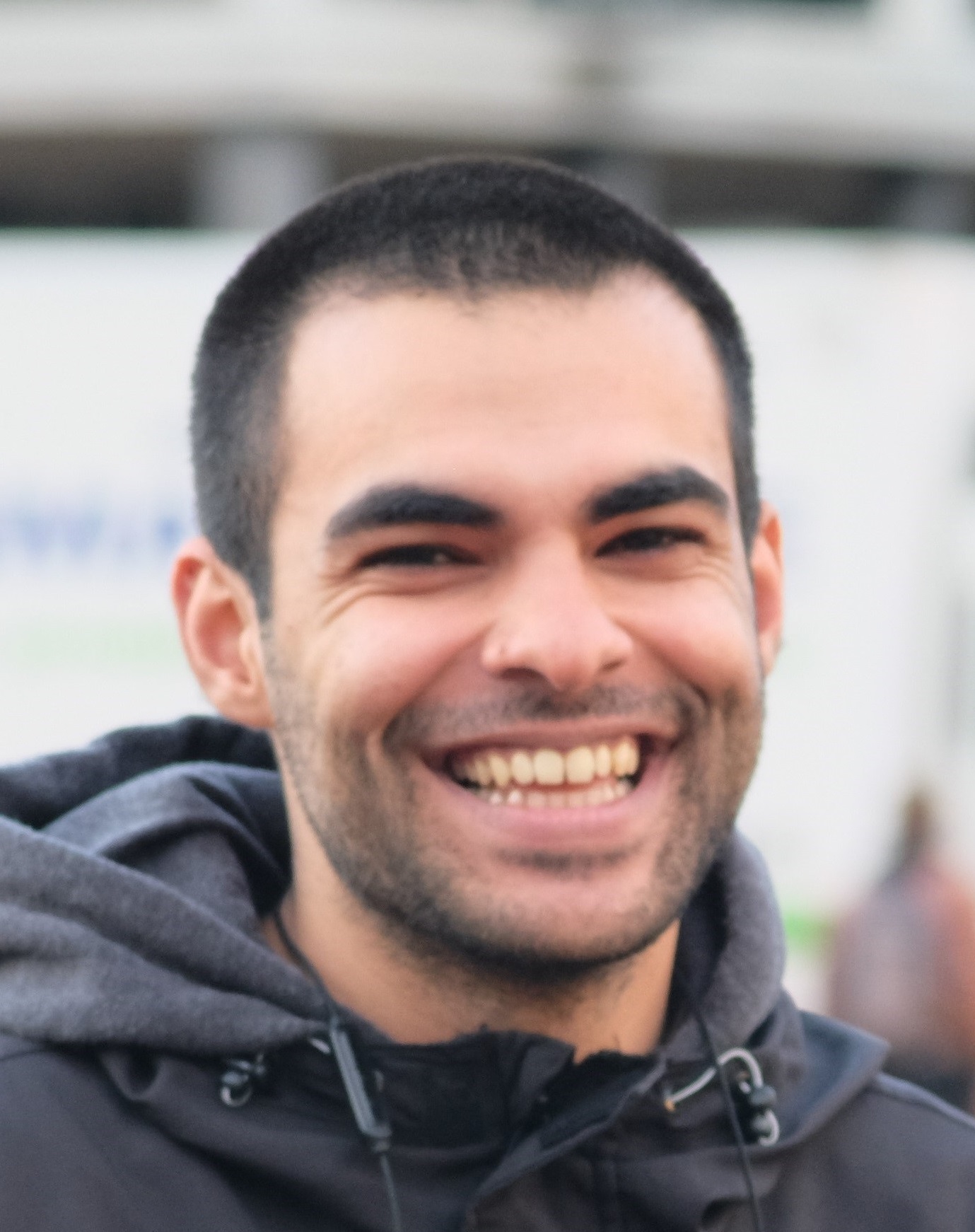}}]{Brayan M. Shali} received the M.Sc. degree (summa cum laude) and the Ph.D. degree in applied mathematics from the University of Groningen, Groningen, The Netherlands, in 2019 and 2023, respectively. He is currently a Postdoctoral Researcher with KU Leuven, Leuven, Belgium. His research interests include mathematical systems theory, contract-based design, networks of dynamical systems, and physics-informed modeling and identification. 
\end{IEEEbiography}

\begin{IEEEbiography}[{\includegraphics[width=1in,height=1.25in,clip,keepaspectratio]{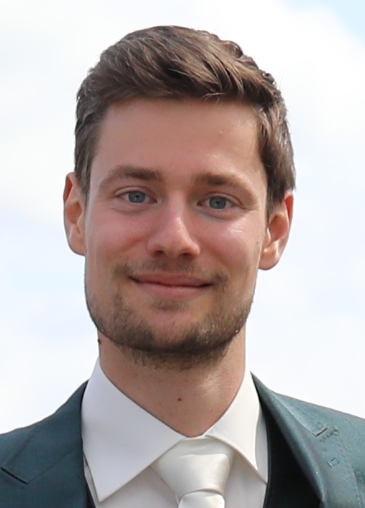}}]{Henk J. van Waarde} is an assistant professor in the Bernoulli Institute for Mathematics, Computer Science and Artificial Intelligence at the University of Groningen in The Netherlands. During 2020-2021 he was a postdoctoral researcher, first at the University of Cambridge, UK, and later at ETH Zürich, Switzerland. He obtained the Ph.D. degree \emph{cum laude} in Applied Mathematics from the University of Groningen in 2020. He was also a visiting researcher at University of Washington, Seattle in 2019-2020. His research interests include learning and data-driven control, system identification and identifiability, networks of dynamical systems, and robust and optimal control. Dr. van Waarde is the recipient of the 2025 SIAM Activity Group on Control and Systems Theory Prize. He serves as an Associate Editor of the IEEE Control Systems Letters.
\end{IEEEbiography}
\end{document}